\documentclass[onefignum,onetabnum]{siamart171218}

\usepackage{amsfonts}
\usepackage{graphicx}
\usepackage{caption}
\usepackage{subcaption}
\usepackage{mathtools}
\usepackage{amssymb}
\usepackage{multirow}
\usepackage{amsmath}
\usepackage{algorithm}
\usepackage{algpseudocode}
\usepackage{hyperref}
\usepackage{array}
\usepackage{mathabx}
\usepackage{bm}
\usepackage[noadjust]{cite} 
\usepackage{comment}
\usepackage{amsopn}
\usepackage{enumitem}
\usepackage{placeins}
\usepackage{float}
\usepackage{booktabs}
\usepackage{makecell}
\usepackage{array}

\ifpdf
\DeclareGraphicsExtensions{.eps,.pdf,.eps,.jpg}
\else
\DeclareGraphicsExtensions{.eps}
\fi

\newtheorem{Remark}[theorem]{Remark}


\newcommand{\R}{\mathbb{R}}

\newcommand{\normF}[1]{\left\|#1\right\|_F}

\newcommand{\DA}{\Delta\!A}
\newcommand{\DB}{\Delta\!B}
\newcommand{\rtilde}{\tilde{r}}

\headers{Efficient Approximations of Matrix Multiplication}{S. Kar, Hariprasad M., Sai Gowri J. N., and M. Venkatapathi}

\title{Efficient approximations of matrix multiplication using truncated decompositions}


\author{
Suvendu Kar
\and
Hariprasad M.\thanks{School of Mathematics and Natural Sciences, Chanakya University, Devanahalli, Bangalore }\email{(hariprasad.h@chanakyauniversity.edu.in)}
\and
Sai Gowri J. N.
\and
Murugesan Venkatapathi\thanks{Department of Computational \& Data Sciences, Indian Institute of Science, Bangalore }\email{(murugesh@iisc.ac.in)}
}


\makeatletter
\newcommand*{\addFileDependency}[1]{
	\typeout{(#1)}
	\@addtofilelist{#1}
	\IfFileExists{#1}{}{\typeout{No file #1.}}
}
\makeatother


\DeclarePairedDelimiter\norm{\lVert}{\rVert}

\hfuzz 400pt

\begin{document}

\maketitle
	
\begin{abstract}
We exploit the truncated singular value decomposition and the recently proposed circulant decomposition for an efficient first-order approximation of the multiplication of large dense matrices. A decomposition of each matrix into a sum of a sparse matrix with relatively few dominant entries and a dense residue can also use the above approach, and we present methods for multiplication using a Fourier decomposition and a cycle decomposition-based sparsifications. The proposed methods scale as $\mathcal{O}(n^2 \log n)$ in arithmetic operations for $n \times n$ matrices for usable tolerances in relative error $\sim$ 1\%. We also present demonstrations of large gains in the efficiency and speed of end-to-end operations of Large Language Models (LLMs) as a motivation. Note that different decompositions for the two matrices $A$ and $B$ in the product $AB$ are also possible in this approach, using efficient a priori evaluations for suitability, to improve further on the error tolerances demonstrated here.
\end{abstract}
	
	\begin{keywords}
		matrix approximations, singular values, circulant matrix, sparsification, Fast Fourier Transform.
	\end{keywords}
	
	\begin{AMS}
		65F55, 65F35, 65F25.
	\end{AMS}

\section{Introduction}
Reducing the scaling of arithmetic operations required in multiplying two (dense) matrices can widely increase the efficiency of computing. Even though the asymptotic scaling of the Strassen-Coppersmith-Winograd type algorithms \cite{3,4,5,6,11, alman2024asymmetryyieldsfastermatrix} as $\mathcal{O}(n^{2.37})$ is attractive compared to the naive approach using $\mathcal{O}(n^3)$ arithmetic operations for an exact multiplication, they have been non-trivial in practical implementation \cite{7,8,9}. As an alternative, randomized algorithms for approximation have been proposed to incur lower computing effort at the cost of lower precision in the evaluated matrix product \cite{matrixmultiply2006Kannan,12}. Nevertheless, these methods are not sufficiently precise or efficient to have a wide impact on practical computing. We propose approximations based on relatively few components of a decomposition that each multiply a general matrix efficiently, thus allowing a first-order approximation of the product of two truncated decompositions.

One may also note that in the more recent applications such as the domain of deep learning, multiplication of large matrices is unavoidable. For example, the forward pass of a convolutional neural network processes data in batches and it may perform evaluations of the form $Z = WX + B$, with $W$ the weights of the connections. Back-propagation computes the gradient of the loss functions using the chain rule which may again involve the multiplication of large matrices \cite{ciresan2011flexible,chellapilla2006high}. Computation of self-attention in transformers uses the product of query, key, and value matrices \cite{vaswani2017attention}, and such multiplication of large matrices is not restricted to the above cases.

We begin with the used notations in \cref{sec:notations}, and the relevant decompositions of matrices in \cref{sec:Decompositions}. The subsequent \cref{sec:methods,sec:analysis,sec:results} detail the methods and algorithms, a theoretical analysis of errors, and the numerical experiments, respectively.

\subsection{Notations used}\label{sec:notations}
All vectors represent a column, where unit vectors have hats with the 2-norm $\norm{\hat{v}}_2=1$, and only the other vectors $\bm v$ are in bold. Matrices $V$ are in capitals. Its columns are indicated by superscripts with $V^{(i)}$ for the $i^{th}$ column and rows by subscripts with $V_{(i)}$ for the $i^{th}$ row. Plain subscripts in $V_i$ represent a particular matrix component. Row vectors $\bm v^T$ or $\bm v^*$ have a transpose or a conjugate transpose of the columns indicated respectively.

\subsection{Relevant decompositions}\label{sec:Decompositions}

\subsubsection{Singular Value Decomposition}
Any $m \times n$ matrix $A$ is given by a sum of weighted rank-one matrices, where its rank $r \leq min(m,n)$:
\begin{align*}
    A &= \sigma_1 \hat{u}_1 \hat{v}_1^* + \sigma_2 \hat{u}_2 \hat{v}_2^* \dots + \sigma_{r} \hat{u}_{r} \hat{v}_{r}^* = \sum_{k = 1}^{r} \sigma_k \hat{u}_k \hat{v}_k^* \\
    &= U \Sigma V^*  \quad \text{ where } \hat{u}_k \in \mathbb{C}^m, \hat{v}_k \in \mathbb{C}^n \text{ , and } \Sigma_{kk} = \sigma_k \in \mathbb{R}_{\geq 0},
\end{align*}
where $\sigma_1 > \sigma_2 > \dots > \sigma_r > 0$. Given $\hat{u}, \hat{v}$ are each sets of mutually orthogonal vectors, we have:
\begin{equation*}
  A\hat{v}_k = \sigma_k \hat{u}_k, \text{ and } A^*\hat{u}_k = \sigma_k \hat{v}_k.  
\end{equation*}   

Though this decomposition requires $\mathcal{O}(n^3)$ for $n \times n$ matrices, we can use randomized algorithms to evaluate only the $k$ largest components in $\mathcal{O}(kn^2)$ arithmetic operations as shown in $\cref{algo:rsvd}$.

\subsubsection{Decomposition into cycles}\label{cycle-decomp}
A matrix can also be decomposed into its $n$ cycles each containing $n$ entries, which in turn are combinations of the $2n-1$ diagonals of a matrix. Let $C$ be the permutation matrix corresponding to a full cycle.
\begin{align}
	C = 
	\begin{bmatrix}
	0 & 1 \\
	I_{n-1} & 0 
	\end{bmatrix}_{ n \times n}.
        \label{exp: of C}
        \end{align}

\vspace{0.2cm}
The decomposition of any matrix into cycles is given by $A = \sum_{j=0}^{n-1} \Lambda_jC^j$, where $\Lambda_j$ are diagonal matrices containing $n$ entries of a cycle of the matrix, and $C^j$ is the above permutation matrix $C$ raised to the power $j$. While it is equally possible to have $A = \sum_{j=0}^{n-1} C^j \Lambda_j$, the ordering of the diagonal elements of $\Lambda_j$ differ in both cases. This decomposition requires $\mathcal{O}(n^2)$ arithmetic operations for its evaluation (see \cref{appendix:example_finding_cycles} for an example). Left multiplication of a cycle to a matrix involves a cyclic permutation of the columns of the matrix followed by a scaling of the rows using the diagonal entries in the cycle, costing $\sim 2n^2$ operations. The converse applies to the right multiplication of a cycle. This decomposition may be useful in approximating matrices with dominant cycles numbering $k \ll n$.

\subsubsection{Circulant Decomposition}

\begin{theorem}\label{th:circulant_decomposition}
Any $n \times n$ matrix A is given by a sum of $n$ circulant matrices $R_k$ with phase factors given by roots of unity.
\begin{align*}
    A &= R_{0} + R_{1}D^1 \dots + R_{n-1}D^{n-1} \\
    &= \sum_{k = 0}^{n-1} R_{k}D^k \quad \text{where } D(q,q) = \omega^q = e^{i\frac{2 \pi}{n} q} \text{, and }
    D(p,q) =0 \text{ for }p \neq q.   
\end{align*}

\end{theorem}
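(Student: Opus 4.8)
The plan is to split the claim into three parts: the existence of the representation $A = \sum_{k=0}^{n-1} R_k D^k$, the mutual Frobenius-orthogonality of the terms $R_k D^k$, and the identity $A \odot R_k D^k = \norm{R_k}_F^2$. I would establish existence by a direct entrywise computation and obtain the other two by short algebraic arguments resting on standard properties of circulant matrices.

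For existence, first I would parametrize each circulant by its generating row, writing $R_k(i,j) = r_k\bigl((j-i)\bmod n\bigr)$, so that $(R_k D^k)(i,j) = r_k\bigl((j-i)\bmod n\bigr)\,\omega^{jk}$ since $D^k$ is diagonal. Summing over $k$ gives $A(i,j) = \sum_{k=0}^{n-1} r_k\bigl((j-i)\bmod n\bigr)\,\omega^{jk}$, an $n^2 \times n^2$ linear system in the generating entries. The key observation is that it decouples across the $n$ cyclic diagonals: fixing the offset $d = (j-i)\bmod n$, letting $i$ range over $0,\dots,n-1$ with $j = (i+d)\bmod n$, and using $\omega^n = 1$ yields $A\bigl(i,(i+d)\bmod n\bigr) = \sum_{k=0}^{n-1} r_k(d)\,\omega^{(i+d)k}$. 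For each fixed $d$ this is an $n\times n$ system in the unknowns $r_0(d),\dots,r_{n-1}(d)$ whose coefficient matrix factors as the Fourier matrix $[\omega^{ik}]_{i,k}$ times the column scaling $\diag_k(\omega^{dk})$; the former is invertible (a Vandermonde matrix on the distinct nodes $\omega^0,\dots,\omega^{n-1}$) and the latter has nonzero diagonal, so the system is uniquely solvable. Inverting each of the $n$ decoupled systems, an inverse DFT, produces the unique generating rows and hence the decomposition.

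For the orthogonality, I would compute $\inner{R_k D^k}{R_l D^l} = \operatorname{tr}\bigl((R_k D^k)^* R_l D^l\bigr) = \operatorname{tr}\bigl(R_k^* R_l\, D^{\,l-k}\bigr)$, using the cyclic invariance of the trace together with $(D^k)^* D^l = D^{\,l-k}$. Three standard facts then finish it: the conjugate transpose and the product of circulants are again circulant, so $M := R_k^* R_l$ is circulant; a circulant has a constant main diagonal, say $M(q,q) \equiv c$; and hence $\operatorname{tr}\bigl(M D^{\,l-k}\bigr) = c\sum_{q=0}^{n-1}\omega^{q(l-k)}$, which vanishes for $k \neq l$ by the orthogonality of the roots of unity and equals $cn$ for $k=l$. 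Finally, expanding $A = \sum_l R_l D^l$ inside $A \odot R_k D^k$ and applying this orthogonality leaves only the term $\norm{R_k D^k}_F^2$; since $\lvert\omega^{jk}\rvert = 1$ the diagonal factor preserves the Frobenius norm, giving $\norm{R_k D^k}_F^2 = \norm{R_k}_F^2$ as claimed.

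The genuinely routine pieces are the circulant identities and the geometric-sum evaluation; the one spot demanding care is the index bookkeeping modulo $n$ and the conjugation conventions when reducing the coefficient matrix to Fourier/Vandermonde form and when simplifying $(D^k)^* D^l$. Conceptually, the entire statement is the Fourier dual of the cycle decomposition $A = \sum_{j} \Lambda_j C^j$ from \cref{cycle-decomp}: conjugating that identity by the DFT matrix turns each diagonal $\Lambda_j$ into a circulant and each shift power $C^j$ into $D^j$, which furnishes a shorter alternative route to existence once the cycle decomposition is granted.
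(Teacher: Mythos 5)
Your proof is correct and follows essentially the same route as the paper: both reduce existence to a DFT along each cyclic diagonal (the paper phrases this as a Fourier series expansion of each cycle $C^j\Lambda_j$, you as inverting a decoupled Fourier/Vandermonde system, and you note the equivalence to the cycle decomposition yourself). Your orthogonality argument via $\operatorname{tr}\bigl(R_k^* R_l D^{\,l-k}\bigr)$ and circulant closure is a self-contained substitute for the paper's appeal to $D^i \odot D^j = n\delta_{ij}$ (which it defers to an external reference), but it rests on the same vanishing geometric sum of roots of unity.
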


\begin{proof}
    Using the cycle decomposition described in section \ref{cycle-decomp},
    \begin{align}
        A = \sum \limits_{j=0}^{n-1} C^j \Lambda_j. \label{rightdecomp}
    \end{align}
    Then using a Fourier series expansion of a cycle $j$,
   \begin{align}
        C^j \Lambda_j = C^j\sum_{k}  r_{j,k} D^k.
   \end{align}
where $r_{j,k}=\frac{1}{n} \sum_l \omega^{-kl}\Lambda_j(l,l)$. Thus we get: 
   \begin{align*}
    A &= \sum_{j,k} C^j r_{j,k} D^k, \\
    & = \sum_{k} \left(\sum_{j} r_{j,k} C^j \right) D^k = \sum_{k} R_k D^k
   \end{align*}
where the circulant matrix $R_k$ for a given $k$, has the constants $r_{j,k}$ in all the $n$ entries of a cycle $j$. This decomposition requires only $\mathcal{O}(n^2 \log n)$ arithmetic operations for its evaluation using a Fast Fourier Transform. 
\end{proof}

\begin{Remark}
We can show that $R_kD^k$ are mutually orthogonal w.r.t. a Frobenius inner product $\odot$, where $A \odot A = \sum_{i,j} \overline{A(i,j)}A(i,j) = \norm{A}_F^2$. We know the property of invariance of the Frobenius inner product with unitary transformations\cite{Hari2024circulant}. Thus, $(WAW^*)\odot(WBW^*) = A \odot B$, where the unitary matrix $W(p,q) = \frac{1}{\sqrt{n}}e^{-i \frac{2\pi pq}{n}}$, with $p,q=0,1,2\cdots n-1$. Further, 
\begin{align*}
    R_iD^i \odot R_jD^j &= (WR_iD^iW^*) \odot (WR_jD^jW^*) = (WR_iW^*WD^iW^*) \odot (WR_jW^*WD^jW^*)\\
    &=(\Lambda_i'C^i) \odot (\Lambda_j'C^j) = 0 \text{ for } i \neq j\\
    &= \normF{R_i}^2 \text{ for } i=j.
\end{align*}
where we have used known relations $WD^jW^*=C^j$, and the diagonalization of a circulant matrix in the Fourier domain as $WR_jW^*=\Lambda_j'$. This property of orthogonality along with the decomposition in \cref{th:circulant_decomposition} gives us $A \odot R_kD^k = \norm{R_k}_F^2$. 
\end{Remark}

\subsubsection{Fourier series based decompositions}
We can also expand every row of the matrix $A$ using Fourier series, giving us the following decomposition.

\begin{theorem}
Every matrix $A$ $\in \mathcal{C}^{m \times n}$ is a weighted sum of rank-one Fourier components.
\begin{align*}
    A = \sum_{k=0}^{n-1} a_k \hat{f}_k \hat{\omega}_k^T \text{, and } A \odot A = \norm{A}_F^2 = \sum_{k=0}^{n-1} a_k^2.
\end{align*}
where $\norm{\hat{f}_k}_2 = \norm{\hat{\omega}_k}_2=1$, and $a_k$ $\in$ $\mathbb{R}_{\geq 0}$.
\end{theorem}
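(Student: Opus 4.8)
The plan is to derive this exactly as one reads off the SVD expansion, but over the fixed orthonormal Fourier basis rather than the right singular vectors, and then to recover the norm identity as a Parseval equality under the Frobenius inner product. First I would fix the normalized Fourier modes $\hat{\omega}_k = \tfrac{1}{\sqrt{n}}\left(1,\omega^{k},\dots,\omega^{(n-1)k}\right)^T$ with $\omega = e^{i\frac{2\pi}{n}}$, so that $\norm{\hat{\omega}_k}_2 = 1$ and $\hat{\omega}_j^*\hat{\omega}_k = \delta_{jk}$. Since these $n$ vectors form an orthonormal basis of $\mathbb{C}^n$, a direct computation of the $(p,q)$ entry of $\sum_{k=0}^{n-1}\overline{\hat{\omega}_k}\,\hat{\omega}_k^T$ gives $\tfrac{1}{n}\sum_{k}\omega^{k(q-p)} = \delta_{pq}$, i.e.\ the resolution of the identity $\sum_{k=0}^{n-1}\overline{\hat{\omega}_k}\,\hat{\omega}_k^T = I_n$.

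Next I would insert this identity on the right of $A$. Writing $A = A\sum_{k}\overline{\hat{\omega}_k}\,\hat{\omega}_k^T = \sum_{k}\big(A\,\overline{\hat{\omega}_k}\big)\hat{\omega}_k^T$ exhibits $A$ as a sum of $n$ rank-one matrices $\bm c_k\hat{\omega}_k^T$, where $\bm c_k = A\,\overline{\hat{\omega}_k}\in\mathbb{C}^m$ is the vector of $k$-th Fourier coefficients of the rows of $A$. Setting $a_k = \norm{\bm c_k}_2\in\mathbb{R}_{\geq 0}$ and $\hat{f}_k = \bm c_k / a_k$ (taking any fixed unit vector when $a_k = 0$) absorbs the magnitude and phase of $\bm c_k$ into a real nonnegative weight and a unit vector, yielding $A = \sum_{k=0}^{n-1} a_k\hat{f}_k\hat{\omega}_k^T$ with $\norm{\hat{f}_k}_2 = \norm{\hat{\omega}_k}_2 = 1$, as claimed.

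For the norm identity I would expand $\inner{A}{A} = \sum_{j,k} a_j a_k\,\inner{\hat{f}_j\hat{\omega}_j^T}{\hat{f}_k\hat{\omega}_k^T}$ and use that the Frobenius inner product of two rank-one matrices factors as $\inner{\hat{f}_j\hat{\omega}_j^T}{\hat{f}_k\hat{\omega}_k^T} = (\hat{f}_j^*\hat{f}_k)(\hat{\omega}_j^*\hat{\omega}_k)$. The orthonormality $\hat{\omega}_j^*\hat{\omega}_k = \delta_{jk}$ annihilates every cross term irrespective of the (generally non-orthogonal) column factors $\hat{f}_k$, and the surviving diagonal terms contribute $a_k^2(\hat{f}_k^*\hat{f}_k) = a_k^2$; hence $A\odot A = \norm{A}_F^2 = \sum_{k} a_k^2$. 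I expect the only real care to lie in the conjugation bookkeeping, namely making the transpose in $\hat{\omega}_k^T$ consistent with the conjugate transpose used in the orthogonality relation so that $\sum_k\overline{\hat{\omega}_k}\hat{\omega}_k^T = I_n$ holds exactly. Once the correct pairing of $\hat{\omega}_k$ with $\overline{\hat{\omega}_k}$ is in place, both the decomposition and the Parseval step are mechanical, and the degenerate $a_k = 0$ case only requires the harmless choice of an arbitrary unit $\hat{f}_k$.
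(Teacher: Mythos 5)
Your proposal is correct and is essentially the paper's own argument in different packaging: inserting the resolution of the identity $\sum_k \overline{\hat{\omega}_k}\hat{\omega}_k^T = I_n$ on the right of $A$ produces exactly the row-wise Fourier expansion the paper uses (your $\bm c_k = A\overline{\hat{\omega}_k}$ equals $\sqrt{n}\,\bm f_k$ in the paper's notation), and your Parseval step via $\hat{\omega}_j^*\hat{\omega}_k = \delta_{jk}$ is the same orthogonality the paper invokes as $D^i \odot D^j = n\delta_{ij}$. The handling of the degenerate case $a_k = 0$ and the explicit factorization of the rank-one Frobenius inner product are minor points of added care, not a different route.
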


\begin{proof}
Consider the Fourier series representation of a row of $A$ written as :
    \begin{align*}
        A_{(i)} &= \sum_{k} f_{i,k} \mathbf{1}^T D^k.  
    \end{align*}
    
    Then, using the canonical basis vectors $\hat{e}_i$ we have, 
    \begin{align*}
        A &= \sum_{i} \hat{e}_i A_{(i)} = \sum_{i} \hat{e}_i \sum_{k} f_{i,k} \mathbf{1}^T D^k\\
        &= \sum_{k} \sum_{i} \hat{e}_i f_{i,k} \mathbf{1}^T D^k
        =  \sum_{k=0}^{n-1} \bm f_k \mathbf{1}^T D^k.
    \end{align*}
    Let $a_k = \sqrt{n}\norm{\bm f_k}_2$, $\hat{f}_k=\frac{\bm f_k}{\norm{\bm f_k}_2}$, and $\hat{\omega}_k^T=\frac{1}{\sqrt{n}} \mathbf{1}^T D^k$. Thus, $A = \sum_{k=0}^{n-1} a_k \hat{f}_k \hat{\omega}_k^T.$

Given $D^i \odot D^j = \sum_q e^{(j-i)\frac{2q \pi}{n}}$ with $q=0, 1, 2 \dots n-1$, we have $D^i \odot D^j =0$ for $i \neq j$, and $D^i \odot D^j = n\delta_{ij}$. Hence, $A \odot A = \norm{A}_F^2 = \sum_{k=0}^{n-1} a_k^2$.

\end{proof}

Note that a column-wise Fourier series representation of $A$ such that we have a rank one decomposition into $A = \sum_{k=0}^{m-1} b_k \hat{\omega}_k \hat{f}_k^T$, with $\norm{A}_F^2 = \sum_{k=0}^{m-1} b_k^2$ is also possible. This decomposition requires $\mathcal{O}(n^2 \log n)$ arithmetic operations for its evaluation using a Fast Fourier Transform (FFT).

\section{Methods} \label{sec:methods}

\subsection{Truncated decompositions and a first-order approximation}
In the proposed approach, we first partially decompose a given matrix $A$ into a sum of $k$ dominant components, and a residue representing the remaining $n-k$ components i.e. $A=\sum_{i=0}^{k-1} A_i + \Delta A$.  Note that the multiplication of a particular component $A_i$ to any given matrix may scale as $\mathcal{O}(n^2 \log n)$ arithmetic operations at most depending on the decomposition used. We may not only multiply the dominant components of the partial decompositions $A_i$ and $B_k$ with each other (as a zeroth order approximation), but also the residue matrices with those multiplication-friendly components i.e. $A_i \cdot \Delta B$ and $\Delta A \cdot B_k$ resulting in a first order approximation as given below.
\begin{align}
    A \cdot B  &= \left\{\sum_i A_i + \Delta A\right\} \cdot \left\{\sum_k B_k + \Delta B\right\} \\
    &= \sum_i A_i \cdot B+ \sum_k \Delta A \cdot B_k+\textcolor{red}{(\Delta A \cdot \Delta B)} \\
    &\approx M = \sum_i A_i \cdot B+ \sum_k \Delta A \cdot B_k.
\label{eq:first_order_expansion}
\end{align}

Also, the first-order approximation may increase the rank of resulting matrices in many cases, compared to the zeroth order multiplication, especially when using SVD components where the ranks of $A_i$ and $B_i$ are one. But, the performance of this approach in reducing errors in the approximate multiplication is more significant. The relative Root-Mean-Square (R.M.S) error in entries of the product is given by $\frac{\norm{\Delta A \cdot \Delta B}_F}{\norm{AB}_F}$. In \cref{sec:analysis} it is shown that this relative error in the approximation can be smaller than the product of the relative errors in the two matrices i.e. $\frac{\norm{\Delta A}_F}{\norm{A}_F}$ and $\frac{\norm{\Delta B}_F}{\norm{B}_F}$. The relative error can also be estimated in $\mathcal{O}(n^2)$ arithmetic operations as summarized later in \cref{sec:summary_estimation}. Hence a low truncation error in the decomposition of one of the two matrices would suffice, even though it is not necessary, for a low error in the approximated product. Using the presented theoretical results, it is also possible to evaluate (and plot) the error bounds for the number of components included in the approximation, given the spectra of singular values or magnitudes of circulant components.

\subsection{Based on Singular Value Decomposition}
Given $\tilde{A} = U \Sigma_A V^T$ and $\tilde{B} = W \Sigma_B Y^T$ as the truncated decompositions (using $\cref{algo:rsvd}$ for real $A$, $B$ respectively), we can approximate their product using the $k$ largest singular value components as :
\begin{align*}
    M \approx \sum\limits_{i=1}^{k} \sigma_i^A u_i v_i^T B +\sum_{i=1}^{k} \sigma_{i}^B \Delta Aw_{i}y_{i}^{T} = \tilde{A}B + \Delta A \tilde{B}.
\end{align*}

Note that multiplication of \emph{each} SVD component with any other matrix is $\sim 2n^2$ arithmetic operations, and the two truncated matrix multiplications in the above approximation require $\sim 4kn^2$ operations using $k$ SVD components of $A$ and $B$. The relative Frobenius error in approximating the matrix using a partial SVD of $A$ is $\frac{\norm{\Delta A}_F}{\norm{A}_F}=\frac{(\norm{A}_F^2-\sum_{i=1}^{k} \sigma_i^2)^{\frac{1}{2}}}{\norm{A}_F}$, which can be evaluated in $\sim n^2$ arithmetic operations.
 
\begin{algorithm}
\caption{Randomized partial SVD \cite{halko2010findingstructurerandomnessprobabilistic}}\label{algo:rsvd}
\begin{algorithmic}[1]
\State \textbf{Input:} Matrix \( A \) of size \( m \times n \), an integer \( s \)
\State \( m, n \gets \text{size}(A) \)
\State \( k \gets \min(s \times \lfloor \log_2(n) \rfloor + 1, n) \)
\State \( \phi \gets \text{random matrix of size } n \times k \)
\State \( Y \gets A \phi \)
\State \( Q, R \gets \text{QR decomposition of } Y \)
\State \( B \gets Q^TA \)
\State \( U, \Sigma, V^T \gets \text{SVD of } B \)
\State \( U \gets Q U \)
\State \(V \gets (V^{T})^{T} \)
\State \textbf{Output:} Matrices \( U ( \in \mathbb{R}^{m\times k}), \Sigma (\in \mathbb{R}^{k\times k}), V(\in \mathbb{R}^{n\times k}) \)
\end{algorithmic}
\end{algorithm}

\subsection{Based on Circulant Decomposition}

Let $A = \sum_{k = 0}^{n-1}R_kD^k$. Any $n \times n$ matrix can be approximated as a partial sum of the above. The relative weight of circulant component $R_k$ in $A$ is $w_k = \frac{\|R_k\|_F^2}{\norm{A}_F^2}$ where $\norm{A}_F^2=\sum_k \|R_k\|_F^2$. The distribution of magnitudes of the circulant components depends on any periodic structure of the entries in the matrix. This distribution for various types of matrices are shown in \cref{sec:plots_CD}. Using some values of $k$ where the above weights are the largest, indicated by $k'$, we can decompose A into $\sum_{k'} R_{k'} D^{k'} + \Delta A$. Here, the relative error in the approximation of the matrix is given by $\frac{\norm{\Delta A}_F}{\norm{A}_F}$. Note that in the case of rectangular matrices, one may have to evaluate these approximations using square blocks of the matrix.

\subsubsection{Evaluating circulant components $R_k$}

If a single circulant component for a given $k$ is to be evaluated, it costs $\mathcal{O}(n^2)$ arithmetic operations.

\begin{theorem}\label{th:fourier_circulant_entries}
When the unique entries of $R_k$ i.e. $R_{k}(j,0)$, where $j=0,1 \dots n-1$, are evaluated using a discrete Fourier transform of the corresponding cycles $j$ of $A$, it is equivalent to a minimization of $\norm{A-R_kD^k}_F$.
\end{theorem}

\begin{proof}
Since the circulant decomposition is orthogonal w.r.t. a Frobenius inner product, let us minimize $\norm{A-R_kD^{k}}_F$ to explicitly evaluate the entries of $R_k$.
\begin{equation*}\label{eq:norm_minimize}
   R_k = \arg \min_{M \in R} f(M) = \arg \min_{M \in R} \norm{A-MD^k}_F.    
\end{equation*}
Decomposing into cycles, let $A = \sum_{j=0}^{n-1} C^j \Lambda_j$, and we also know circulant matrices have constant entries in each cycle with $M = \sum_{j=0}^{n-1} a_jC^j$ and up to $n$ unique constants $a_j$. Decomposing the above minimization into a minimization over each cycle to get the corresponding unique entries of $R_k$ in its first column:
\begin{equation*}
\min_{M \in R} f(M) \implies R_k(j,0) = \arg \min_{a_j} \norm{\Lambda_j-a_jD^k}_F \,\forall\, j.
\end{equation*}
Thus the constants $a_j$ in the $j^{th}$ cycle of $R_k$ are given by the first moment i.e. the average value of entries in $\Lambda_jD^{-k}$ as:
\begin{equation*}
   R_{k}(j,0) = \frac{1}{n} \left( \mathbf{1}^T (A \circ C^{j})D^{-k} \mathbf{1} \right) \text{for } j = 0,1, \cdots n-1.
\end{equation*}
where $\mathbf{1}$ is a $n$-vector with entries all ones, thus representing a $k^{\text{th}}$ Fourier component in the discrete Fourier transform of the $j^{\text{th}}$ cycle of matrix $A$. 
\end{proof}

\vspace{2mm}
Let the unitary matrix $W(p,q) = \frac{1}{\sqrt{n}}e^{-i \frac{2\pi pq}{n}}$, with $p,q=0,1,2\cdots n-1$. For a vector $\bm x$, let $\bm y = W \bm x$ be the FFT of $\bm x$, and $\bm x = W^* \bm y$ be the corresponding IFFT transform. Recall from \cref{th:circulant_decomposition} and \cref{th:fourier_circulant_entries} that the Discrete Fourier Transform of the $j^{th}$ cycle of a matrix can provide us its circulant components $r_{j,k}$. Thus arranging the cycles of matrix $A$ into columns of $\underline{A}$ and applying Fast Fourier Transform (FFT) operations on its columns i.e. $\underline{B}=W\underline{A}$ where $\underline{B}/\sqrt{n}$ provides us all the circulant components of the matrix in only $\mathcal{O}(n^2 \log n)$ arithmetic operations. Note that :
\begin{align}
  \frac{1}{\sqrt{n}}\underline{B}(k,j)=\frac{1}{\sqrt{n}}\sum_l W(k,l)\underline{A}(l,j) = \frac{1}{n} \sum_l \omega^{-kl}\Lambda_j(l,l)=r_{j,k}.
\end{align}
where $r_{j,k}$ is the constant in all the $n$ entries of the $j^{th}$ cycle of the circulant matrix $R_k$, and the above is described in \cref{algo:circulant_decomposition_fft}. One can also use the inverse transformation of a cycle of the 2D-FFT of $A$ given by $WAW^*$, to evaluate a corresponding circulant component i.e. $R_kD^k=W^*(WAW^* \circ C^k)W$. But this evaluation costs twice as the number of arithmetic operations as the above. The similarity transformation between the cycles of the 2D-FFT of $A$ and the circulant components of $A$, was shown elsewhere \cite{Hari2024circulant}.

\begin{algorithm}
\caption{Circulant Decomposition of a Matrix using FFT \cite{Hari2024circulant}} \label{algo:circulant_decomposition_fft}
\begin{algorithmic}
\State \textbf{Input:} Matrix \( A \in \mathbb{R}^{n \times n} \)
\State Initialize empty list R
\State Step 1: Construct $\underline{A}$ as $\underline{A}(:,j) = diag(\Lambda_j)$ with $\Lambda_j$ as in equation \eqref{rightdecomp}
\State Step 2: Compute $\underline{B} = W \underline{A}$
\State 
\For{each \( k \) from 0 to \( n-1 \)} \\
     $R(k) =\frac{1}{\sqrt{n}}transpose(\underline{B}(k,:))$  
\EndFor
\Comment{The first column of corresponding circulant component $R_k$ of $A$.} 
\end{algorithmic}
\end{algorithm}

\subsubsection{Multiplication using circulant components}\label{optimal_procedure_to_compute_step_6}
 
\begin{algorithm}
\caption{First-order Approximation of Matrix Multiplication via Circulant Decomposition} \label{algo:cd_matrix_multiplication}
\begin{algorithmic}
\State \textbf{Input:} Two \( n \times n \) matrices \( A \) and \( B \), integer k.
\State Step 1: Find circulant components of \( A \) and \( B \):
\State \hspace{10pt} \( ( R_A) \gets \text{using \ref{algo:circulant_decomposition_fft} for A} \)
\State \hspace{10pt} \( ( R_B) \gets \text{using \ref{algo:circulant_decomposition_fft} for B} \)
\State Step 2: Compute magnitudes of circulant columns:
\State \hspace{10pt} \( T_A[i] = \| R_A[i] \|_2 \quad \text{for} \quad i = 0, \dots, n-1 \)
\State \hspace{10pt} \( T_B[i] = \| R_B[i] \|_2 \quad \text{for} \quad i = 0, \dots, n-1 \)
\State Step 3: Select \( k \) indices based on top $k$ values of $T$:
\State \hspace{10pt} \( In_A \gets \text{k indices of top values in } T_A \)
\State \hspace{10pt} \( In_B \gets \text{k indices of top values in } T_B \)
\State Step 4: Create empty lists $L_A, L_B$ of length k each. For those \(k\) indices in $In_A, In_B$:
\State \hspace{10pt} \( L_A[i]=\sqrt{n}WR_A[In_{A}[i]]  \quad \text{for } \quad i=0,1,\cdots,k-1 \)
\State \hspace{10pt} \( L_B[i]=\sqrt{n}WR_B[In_{B}[i]]  \quad \text{for } \quad i=0,1,\cdots,k-1 \)
\State Step 5: Compute $\sum_{k} A_{k}B$
\State \hspace{10pt} \(M \gets \text{IFFT}\left (\sum_{t =0}^{k-1}diag(L_A[t]) C^{In_A[t]} \text{FFT}(B) \right) \)
\State Step 6 : Compute $\sum_{j} \Delta AB_{j}$ and update M
\State \hspace{10pt} \(\Delta A \gets A - \sum_{t \in In_A} (R_{A_t}D^{t}) \) \Comment{$R_{A_t}$ is the circulant matrix having first column as $R_A[t]$}
\State \hspace{10pt} \(M \gets M + \left(\text{IFFT}\left( \sum_{t =0}^{k-1} (C^{{{In}_B}[t]})^*diag(\overline{L_B[t]})\text{FFT}(\Delta A^*)\right)\right)^*\)
\State \textbf{Output} : \( M \) \Comment{Return the final approximation.}
\end{algorithmic}
\end{algorithm}

After an evaluation of circulant components $A$ and $B$ using \cref{algo:circulant_decomposition_fft}, the \cref{algo:cd_matrix_multiplication} completes the first-order multiplication evaluations i.e. $AB \approx M = \sum_{k} A_{k}B+ \sum_{j} \Delta AB_{j}$. Here, only the selected indices $j$ and $k$ based on the magnitude of circulant components of $A$ and $B$ are used. There are two salient features of this evaluation in the above algorithm explained here. While multiplying circulant components ($R_kD^k$) and a matrix on the right using FFT operations, a naive approach results in a notable increase of the computing effort required for this method. In the optimal evaluation of circulant matrix-vector products, recall that one evaluates a Hadamard product of the FFT of the first column of a circulant matrix and the FFT of given column vector. An inverse FFT of this resulting vector provides us the circulant matrix-vector product in $\sim 2n \log n$ arithmetic operations. Also, in an optimal evaluation of $R_kD^kB$, evaluating the FFT of the columns of $D^kB$ for each value of $k$ can be rewritten as a cyclic permutation of the FFT of the columns of $B$ i.e. $WD^kB = C^kWB$, avoiding multiple FFT operations for different values of $k$ in this evaluation. This evaluation using the first column $\bm l_k$ of $R_k$ for any given column $\bm b$ in $B$ is then given by: 
\begin{equation*}
 W\bm l_k \circ W(D^k\bm b) = W \bm l_k \circ WD^kW^{*}W\bm b =  W\bm l_k \circ C^kW\bm b  
\end{equation*}
where we have used the relations $W^*W=WW^*=I$ and $W^*D^kW=C^k$, and `$\circ$' is a Hadamard product.

Secondly, the inverse transform can be applied only once on the sum of all of the above products in the transformed domain i.e. $W^*(\sum_k WR_kD^kB)$. This obviates the need for inverse transformation of each term separately before the summation. The above procedure of efficiently multiplying a matrix on the right of a circulant component can also be used to evaluate the products $B_j^*\Delta A^*$ as well, and apply a conjugate transpose to provide us $\Delta A B_j$. Both the properties described above allow us to reduce the scaling of the overall computing effort from $\mathcal{O}(kn^2 \log n)$ to $\mathcal{O}(kn^2)$ when $k > \log n$ is the number of circulant components used in the multiplication of matrices, as given by \cref{algo:cd_matrix_multiplication}. For a given $k < \log n$ the scaling of the algorithm remains $\mathcal{O}(n^2 \log n)$ as the one-time application of FFT on $A$ and $B$ becomes the leading term in the scaling of arithmetic operations.

\subsection{Based on Fourier-domain sparsification} \label{sec:Fourier_sparsification}
Recalling the first order approximation proposed using sparse matrix components $A_s$ and $B_s$:
\begin{align}
    A \cdot B  &= \{A_s + \Delta A\} \cdot \{B_s + \Delta B\} = A_s \cdot B + \Delta A \cdot B_s + \textcolor{red}{(\Delta A \cdot \Delta B)} \\
    &\approx M = A_s \cdot B + \Delta A \cdot B_s.
    \label{eq:sparse_multiply}
\end{align}
This formulation applies a Fourier transform to both matrices $A$ and $B$ such that:
\begin{equation*}
     A \cdot B = F^*(A) \cdot F(B),   
\end{equation*}
where $F^*(A) := AW^*$, $F(B) := WB$ respectively. Here again, the unitary matrix is $W(p,q) = \frac{1}{\sqrt{n}}e^{-i \frac{2\pi pq}{n}}$, with $p,q=0,1,2\cdots n-1$ and $n$ represents the common inner dimension when two rectangular matrices are being multiplied. One can sparsify the matrix $F^*(A)$ by selecting the top $k$ dominant frequencies for each row, with $k$ potentially varying for each row. Let this sparse matrix be represented as $\Tilde{F}^*(A)$. Similarly, let $\Tilde{F}(B)$ denote the sparsified version of $F(B)$ using only $k$ dominant frequencies in the columns. Using the above in the matrix product \eqref{eq:sparse_multiply}, a first order approximation $M$ is evaluated as follows:
\begin{equation*}
 M = \Tilde{F}^*(A) \cdot F(B) + [F^*(A) - \Tilde{F}^*(A)] \cdot \Tilde{F}(B)   
\end{equation*}
The relative errors defined earlier apply here as well, and the below \cref{alg:mul_topk} describing its implementation may require exploitation of sparse data structures.

\begin{algorithm}
\caption{Top-$k$ Matrix Approximation by Row} \label{alg:topk}
\begin{algorithmic}[1]
\Require $A \in \mathbb{C}^{m \times n}$ (input matrix), $k$ (sparsity parameter), $n$ (number of columns)
\Ensure $A_{\text{approx}} \in \mathbb{C}^{m \times n}$ (matrix with top-$k$ entries per row)
\State $k=min(k,n)$ \Comment{Ensure $k \leq n$}
\State Initialize $A_{\text{approx}} \gets 0^{m \times n}$ \Comment{Zero matrix}
\For{$i \gets 1$ to $m$}
    \State $\mathbf{a} \gets |A(i,:)|$ \Comment{Absolute values of $i$-th row}
    \State $ I \gets \text{argsort}(\mathbf{a})$ \Comment{Indices of increasing values of \( a \)}
    \State $\text{topk\_indices} \gets I(n-k+1:n)$ reversed \Comment{Indices of $k$ largest entries}
    \State $A_{\text{approx}}(i, \text{topk\_indices}) \gets A(i, \text{topk\_indices})$ \Comment{Copy original values}
\EndFor
\end{algorithmic}
\end{algorithm}

\begin{algorithm}[H]
\caption{First-order approximation of matrix multiplication based on sparsification} \label{alg:mul_topk}
\begin{algorithmic}[1]
\Require $A \in \mathbb{C}^{m \times n}$, $B \in \mathbb{C}^{n \times p}$ (input matrices), 
\Ensure $M \in \mathbb{C}^{m \times p}$ (Approximation of matrix multiplication AB)

\State $\tilde{A} \gets AW^*$ \Comment{$\mathcal{F}^*(A)$}
\State $\tilde{B} \gets WB$ \Comment{$\mathcal{F}(B)$}
\State $\tilde{A}_{\text{approx}} \gets \text{Algorithm} \ref{alg:topk}(\tilde{A},k,n)$ 
\State $\tilde{B}_{\text{approx}}\gets \text{Algorithm} \ref{alg:topk}(\tilde{B},k,p)$
\State $M \gets \tilde{A}_{approx}\tilde{B}$
\State $M \gets M + (\tilde{A}-\tilde{A}_{\text{approx}})\tilde{B}_{\text{approx}}$
\State Return : $M$

\end{algorithmic}
\end{algorithm}

\subsection{Other theoretical aspects in practical implementation:} \label{sec:summary_estimation}
The following points of consideration are highlighted for practical implementation and optimal computation, given a required tolerance in relative error.
\begin{itemize}
    \item As mentioned earlier, we can infer the suitability of a decomposition using values of $\frac{\norm{\Delta A}_F} {\norm{A}_F}$ and $\frac{\norm{\Delta B}_F} {\norm{B}_F}$, for a  given number of components of the particular decomposition. The circulant decomposition and the Fourier decomposition used for sparsification can be evaluated in full in $\mathcal{O}(n^2 \log n)$, and they are negligible in computational cost relative to the naive exact multiplication of two matrices. But, only the top-$k$ singular value components are evaluated using the randomized partial SVD for some $k \ll n$ in $\mathcal{O}(kn^2)$ arithmetic operations. Note that two different decompositions for $A$ and $B$ are admissible in the proposed approach.
    \item Even before the approximation of the product using truncated decompositions of the matrices at hand, we can estimate the relative error $\frac{\norm{\Delta A \cdot \Delta B}_F} {\norm{AB}_F}$ using $\mathbb{E}[\norm{AB}_F] \approx c_1 \norm{A}_F \norm{B}_F$, and $\mathbb{E}[\norm{\Delta A.\Delta B}_F] \approx c_2 \norm{\Delta A}_F \norm{\Delta B}_F$ with appropriate constants $c$ using statements 3.1 - 3.6 of the next \cref{sec:analysis}, and in the \cref{tab:front_constants} for additional distributions of entries of a matrix which may be known or sampled as such. Avoiding such an assumption on matrices $A$ and $B$ using bounds such as statements 3.8 and 3.14 are possible as well. We may include additional terms of the decomposition to reduce $\norm{\Delta A}_F$ and $\norm{\Delta B}_F$ in approximating the product if these resulting a priori estimates of relative error are not satisfactory. This step is optional in practice.
    \item We may also estimate the relative error after computing the approximate product with a given number of components. If the resulting relative error is not small enough, we may then approximate the residue $\Delta A. \Delta B$ to a required tolerance using the same approach and sum it to the current approximation $M$. This posterior estimation of relative error can be performed using statement 3.7 of the next \cref{sec:analysis}.
    \item One may improve the posterior estimation using results in statements 3.8 and 3.14 of the next section at a marginal increment of the cost, if assumptions on residue matrices $\Delta A$ and $\Delta B$ such as signed independent entries or a flat SVD/CD spectra, are to be strictly avoided.
    \item One may use $\norm{A(BG)}_F/\sqrt{l}$ with a thin $n \times l$ random matrix $G$ with entries from $\mathcal{N}(0,1)$ or a Rademacher distribution \cite{Hutchinson_1989} where $l \ll n$, as an estimate of $\norm{AB}_F$ in the denominator, and similarly for $\norm{\Delta A.\Delta B}_F$ in the numerator of the relative error. This numerical/statistical estimation of the Frobenius norms may be applicable for larger dimensions and moderate tolerance in relative errors $\epsilon_{norm}$ of the estimated norms. Note that the dimension $l$ of the matrix $G$ has to vary as $\epsilon_{norm}^{-2}$. One may be able to allow larger errors $\epsilon_{norm}$ when the relation $\norm{\Delta A.\Delta B}_F \ll \norm{AB}_F$ is already well satisfied.
\end{itemize}

\section{Analysis : Relative errors and its estimation}\label{sec:analysis}

Our goal is to derive relations useful for estimating the relative error in the approximate matrix products i.e. $\frac{\norm{\Delta A \cdot \Delta B}_F} {\norm{AB}_F}$ for any given problem, using $\norm{\Delta A}_F$, $\norm{\Delta B}_F$ and $\norm{A}_F$, $\norm{B}_F$ respectively. In the first part of this section, we present results on the expected Frobenius norm of a product of matrices, when entries of the two matrices are independent. In this initial part of the analysis using statements numbered 3.1 - 3.6, we present results that allow us to upper bound the expected relative errors, with derivations that are agnostic to the decomposition used. These results are relatively robust to varying distributions of the entries in the matrix (known a priori, or sampled as such) using the numerically evaluated front constants $c$ in $\mathbb{E}[\norm{\Delta A \Delta B}_F] \approx c \norm{\Delta A}_F \norm{\Delta B}_F$, which are presented in \cref{tab:front_constants} in the appendix. We also present \cref{th:corollary_error_general} which allows us to upper bound or estimate the relative error for the product using the norm $\norm{M}_F$ of the approximate product, under the assumption of independent entries of residue matrices, post the multiplication.

Later we presents results in statements 3.8 to 3.14 where we show that the above derived relations also apply to matrices even with dependent entries, when they have a relatively flat (SVD or CD) spectra i.e. when the dominant components have been already included in the truncated decomposition leaving a residue spectra of nearly constant magnitudes. One can also use the evaluated spectra of a partial SVD or the full CD to derive more stringent error estimators with no such assumptions on the spectra, but with a marginal increase in the cost of estimation, as shown.

To proceed, we consider matrices with a Haar distribution of singular vectors for any given distribution of singular values, which also encompass matrices with random entries from a standard normal distribution.

\begin{lemma}\label{th:expected_norm_square}
    Let $A = Q_1 D_1 Q_2$ and $B = Q_3 D_2 Q_4$ be singular value decompositions of $A$ and $B$, and $Q_1,Q_2,Q_3,Q_4$ be Haar distributed unitary matrices. Then we have $\mathbb{E}(\norm{AB}^2_F) = \frac{1}{n}\norm{D_1}^2 \norm{D_2}^2$. 
\end{lemma}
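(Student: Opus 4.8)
The plan is to exploit the unitary invariance of the Frobenius norm to collapse the four Haar factors into a single Haar unitary, and then to evaluate the expectation entrywise. First I would write the product as $AB = Q_1 D_1 (Q_2 Q_3) D_2 Q_4$ and observe that, since $\norm{\cdot}_F$ is invariant under left and right multiplication by unitary matrices, the outer factors $Q_1$ and $Q_4$ drop out, leaving $\norm{AB}_F^2 = \norm{D_1 U D_2}_F^2$ with $U := Q_2 Q_3$. The key structural observation is that $U$ is itself Haar distributed: for fixed $Q_2$ the law of $Q_2 Q_3$ equals that of $Q_3$ by left invariance of the Haar measure, and independence of $Q_2,Q_3$ then makes $Q_2 Q_3$ Haar unconditionally. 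Thus the whole problem reduces to computing $\mathbb{E}(\norm{D_1 U D_2}_F^2)$ for a single Haar unitary $U$.

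Next I would expand the Frobenius norm entrywise. Writing $D_1 = \diag(a_1,\dots,a_n)$ and $D_2 = \diag(b_1,\dots,b_n)$, the $(i,j)$ entry of $D_1 U D_2$ is $a_i U_{ij} b_j$, so that
\begin{equation*}
  \norm{D_1 U D_2}_F^2 = \sum_{i,j} a_i^2 \, b_j^2 \, |U_{ij}|^2 .
\end{equation*}
Taking expectations and moving it inside the finite sum, the only random quantity remaining is $\mathbb{E}(|U_{ij}|^2)$.

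The one nontrivial input --- and the step I expect to be the crux --- is the second-moment identity $\mathbb{E}(|U_{ij}|^2) = 1/n$ for every entry of an $n\times n$ Haar unitary. I would establish this from the fact that each column of $U$ is a unit vector, giving $\sum_i |U_{ij}|^2 = 1$ deterministically, combined with the permutation symmetry of the Haar measure (left and right multiplication by permutation matrices preserves it), which forces $\mathbb{E}(|U_{ij}|^2)$ to be independent of $(i,j)$; the two facts together pin down the common value $1/n$. Substituting this back gives
\begin{equation*}
  \mathbb{E}(\norm{AB}_F^2) = \frac{1}{n} \sum_{i,j} a_i^2 b_j^2 = \frac{1}{n}\Big(\sum_i a_i^2\Big)\Big(\sum_j b_j^2\Big) = \frac{1}{n}\norm{D_1}_F^2 \norm{D_2}_F^2,
\end{equation*}
which is the claim. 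The points requiring care are the justification that products of independent Haar unitaries remain Haar and the symmetry argument behind $\mathbb{E}(|U_{ij}|^2)=1/n$; everything else is a routine entrywise computation.
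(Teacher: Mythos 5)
Your proposal is correct and follows essentially the same route as the paper's own proof: reduce $\norm{AB}_F^2$ to $\norm{D_1 Q D_2}_F^2$ with $Q=Q_2Q_3$ Haar by unitary invariance, expand entrywise, and use $\mathbb{E}(|Q_{ij}|^2)=1/n$. The only difference is that you spell out the justification of $\mathbb{E}(|Q_{ij}|^2)=1/n$ via column normalization and permutation symmetry, which the paper simply asserts.
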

\begin{proof}
    The Frobenius norm is invariant under unitary matrix multiplication:
    \begin{align*}
        \norm{AB}^2_F &=  \norm{D_1 Q_2Q_3 D_2}^2_F.
    \end{align*}
    Then we have Haar distribution which is invariant under unitary matrix multiplication, so $Q = Q_2Q_3$ is also Haar distributed. Therefore, 
     \begin{align*}
        \norm{AB}^2_F &=  \norm{D_1 Q D_2}^2_F, \\
        &= \sum_{i,j} D^2_1(i) Q^2_{ij} D^2_2(j).
    \end{align*}
    The expected values become:
    \begin{align*}
        \mathbb{E}[\norm{AB}^2_F] &=  \norm{D_1 Q D_2}^2_F, \\
        &= \sum_{i,j} D^2_1(i)\mathbb{E}[ Q^2_{ij}] D^2_2(j), \\
        &= \sum_{i,j} \frac{1}{n}D^2_1(i) D^2_2(j) = \frac{1}{n} \norm{D_1}_F^2 \norm{D_2}_F^2.
    \end{align*}
\end{proof}

Next to apply concentration inequality we need the Lipschitz constant of the function.
$f(Q) = \norm{D_1 Q D_2}^2_F = Tr(D_1^2Q D_2^2 Q^T)$. 
Now we need to consider: 
\begin{align*}
   | f(Q) - f(P) | &= |Tr(D_1^2(Q D_2^2 Q^T - P D_2^2 P^T ))| \\
   & \leq \norm{D_1^2}_F \norm{(Q D_2^2 Q^T - P D_2^2 P^T )}_F \quad \text{Cauchy-Schwarz for Frobenius inner product.}\\
   & \leq \norm{D_1^2}_F \norm{Q D_2^2 Q^T - Q D_2^2 P + Q D_2^2 P- P D_2^2 P^T}_F \\
   &\leq \norm{D_1^2}_F \left(\norm{Q D_2^2 Q^T - Q D_2^2 P^T}_F + \norm{Q D_2^2 P^T- P D_2^2 P^T}_F \right) \quad \text{Triangle inequality.}\\
   & \leq 2 \norm{D_1^2}_F \norm{D_2^2}_2 \norm{Q-P}_F \quad \text{using the unitary invariance and the maximum diagonal entry}. 
\end{align*}
So the function has the Lipschitz constant $2 \norm{D_1^2}_F \norm{D_2^2}_2$.

From the symmetry of the problem we have $L = \min \{ 2 \norm{D_1^2}_F \norm{D_2^2}_2, 2 \norm{D_1^2}_2 \norm{D_2^2}_F \}$. Next we recall a theorem from \cite{meckes2019random} (Chapter 5 Theorem 5.17), 

\begin{theorem}\textbf{(Concentration on compact groups:)}\label{elizabeths}
Given $n_1, \cdots , n_k \in  \mathbb{N}$, let $X = \mathbb{G}_{n_1} \times \cdots \times \mathbb{G}_{n_k}$, where for each $n_i$, $\mathbb{G}_{n_i}$ is one of $\mathbb{SO}(n_i)$, $\mathbb{SO}^{-}(n_i)$, $\mathbb{SU}(n_i)$, $\mathbb{U}(n_i)$, or $\mathbb{Sp}(2n_i)$. Let $X$ be equipped with the $\ell_2$-sum of Hilbert--Schmidt metrics on the $\mathbb{G}_{n_i}$.

Suppose that $F : X \to \mathbb{R}$ is $L$-Lipschitz, and that $\{ U_j \in \mathbb{G}_{n_j} : 1 \leq j \leq k \}$ are independent, Haar-distributed random matrices. Then for each $t > 0$,
\[
\mathbb{P} \left[ F(U_1, \dots, U_k) \geq \mathbb{E}[F(U_1, \dots, U_k)] + t \right] 
\leq \exp\left( -\frac{(n - 2)t^2}{24L^2} \right),
\]
where $n = \min\{ n_1, \dots, n_k \}$.
\end{theorem}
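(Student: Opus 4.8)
The plan is to prove this through the curvature--concentration machinery, that is, by realizing each classical compact group as a compact Riemannian manifold whose Ricci curvature is bounded below by a positive multiple of its dimension parameter, and then invoking the Bakry--\'Emery criterion together with Herbst's argument. The $\ell_2$-sum structure of the metric on $X$ is exactly what makes a single-factor estimate tensorize, so the first reduction is to the case $k=1$ of a single group $\mathbb{G} = \mathbb{G}_{n_i}$, from which the product statement is recovered at the end.

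First I would fix on each $\mathbb{G}_{n_i}$ the bi-invariant Riemannian metric induced by the Hilbert--Schmidt inner product on the ambient matrix algebra; the associated geodesic distance dominates the chordal Hilbert--Schmidt distance, so an $F$ that is $L$-Lipschitz in the stated metric is $L$-Lipschitz in the geodesic metric as well (whence $|\nabla F| \leq L$ almost everywhere), and the normalized Haar measure is exactly the normalized Riemannian volume. The crux is then a lower Ricci bound: for $\mathbb{SO}(n)$ (and its coset $\mathbb{SO}^-(n)$, isometric to it) one has $\mathrm{Ric} \geq \tfrac{n-2}{4}\,g$, and the analogous bounds for $\mathbb{SU}(n)$, $\mathbb{U}(n)$, $\mathbb{Sp}(2n)$ are all at least a positive multiple of the same order, so that $\mathrm{Ric} \geq \kappa\,g$ with $\kappa \gtrsim (n-2)$ holds uniformly across the five families with an explicit constant.

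Next I would convert the curvature bound into concentration. By the Bakry--\'Emery theorem, $\mathrm{Ric} \geq \kappa g$ with $\kappa > 0$ yields a logarithmic Sobolev inequality for the volume measure with constant $\sim 1/\kappa$; Herbst's argument---bounding the Laplace transform of $F - \mathbb{E}F$ through the differential inequality fed by the log-Sobolev inequality and the gradient bound $|\nabla F| \leq L$---then produces the sub-Gaussian tail $\mathbb{P}[F \geq \mathbb{E}F + t] \leq \exp(-\kappa t^2 / (C L^2))$. Tensorization of the log-Sobolev inequality over the product $X$ replaces $\kappa$ by $\min_i \kappa_i$, i.e. by the factor with $\min_i n_i = n$, and carefully tracking the numerical constants through this chain is what pins down the explicit $\tfrac{(n-2)}{24 L^2}$ in the exponent.

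The main obstacle is twofold. The heavier part is the explicit, uniform Ricci lower bound with the sharp-up-to-constants $(n-2)$ scaling, which requires computing the curvature from the structure constants of each Lie algebra under the Hilbert--Schmidt normalization and checking that the worst family, $\mathbb{SO}$, still delivers $n-2$. The subtler point is $\mathbb{U}(n)$, which is not semisimple: its one-dimensional central $\mathbb{U}(1)$ direction is flat, so $\mathrm{Ric}$ degenerates there and the naive Bakry--\'Emery bound fails. I would handle this by splitting $\mathbb{U}(n)$ along $\mathbb{SU}(n)$ and the central circle, using the strictly positive curvature on the semisimple part and the boundedness (hence trivial one-dimensional concentration) of the flat circle factor, and then recombining via tensorization so that the stated bound survives with the same constant.
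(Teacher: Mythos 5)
The paper does not prove this statement at all: it is quoted verbatim as Theorem 5.17 of \cite{meckes2019random}, and your sketch---bi-invariant Hilbert--Schmidt metrics, Ricci lower bounds of order $n-2$ (worst for $\mathbb{SO}(n)$), Bakry--\'Emery yielding a logarithmic Sobolev inequality, Herbst's argument, tensorization over the product, and a separate treatment of the flat central circle of $\mathbb{U}(n)$---is precisely the route taken in that reference. Your proposal is therefore correct and coincides with the source's proof; the only piece left implicit is the bookkeeping that produces the specific constant $24$, which you rightly identify as a matter of tracking constants through the chain.
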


 Let $J = \text{diag}(-1,1,\cdots ,1)$. Then we decompose the space of orthogonal matrices as $\mathbb{O}(n) = \mathbb{SO}(n) \bigsqcup (\mathbb{SO}(n)J)$. Then note that $-F(Q) = \norm{D_1 Q D_2}_F^2 = \norm{D_1 Q D_2 J}_F^2 = \norm{D_1 QJ D_2}_F^2 = -F(QJ)$. Let us define the measure $\mu_{\mathbb{O}(n)} = \frac{1}{2} \left( \mu_{\mathbb{SO}(n)}   +  \mu_{\mathbb{SO}(n) J} \right)$. This gives us $\mathbb{E}_{\mathbb{O}(n)}[F] = \frac{1}{2} \left( \mathbb{E}_{\mathbb{SO}(n)}[F]   +  \mathbb{E}_{\mathbb{SO}(n) J}[F] \right) = \mathbb{E}_\mathbb{SO}(n)[F]$.  

Then we have the corollary in our setting:
\begin{corollary}\label{th:concentrationprob}
    $ \mathbb{P} \left[   \norm{D_1 Q D_2}^2_F \leq \mathbb{E}[\norm{D_1 Q D_2}^2_F] - t \right] 
\leq \exp\left( -\frac{(n - 2)t^2}{96 \norm{D_1}^4_F \norm{D_2}_2^4} \right)$.
\end{corollary}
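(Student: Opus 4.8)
The plan is to recognize this corollary as a direct specialization of the concentration estimate in \cref{elizabeths} applied to the negative of the function $F(Q)=\norm{D_1 Q D_2}_F^2$, followed by a crude simplification of the Lipschitz constant into the stated form. Since \cref{elizabeths} controls an \emph{upper} tail $\mathbb{P}[G \geq \mathbb{E}[G]+t]$, whereas the corollary concerns the \emph{lower} tail of $F$, I would first apply the theorem to $G=-F$. This function inherits exactly the same Lipschitz constant $L=2\norm{D_1^2}_F\norm{D_2^2}_2$ as $F$, because $|(-F)(Q)-(-F)(P)|=|F(Q)-F(P)|$, and it satisfies $\mathbb{E}[-F]=-\mathbb{E}[F]$, so that the event $\{-F \geq \mathbb{E}[-F]+t\}$ is literally $\{F \leq \mathbb{E}[F]-t\}$. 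Feeding this into the theorem immediately yields $\mathbb{P}[F \leq \mathbb{E}[F]-t] \leq \exp(-(n-2)t^2/(24L^2))$.

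Before that, though, I must reconcile the group on which $Q$ lives with the hypotheses of \cref{elizabeths}. For a real singular value decomposition the factor $Q=Q_2 Q_3$ is Haar distributed on $\mathbb{O}(n)$, which is not among the groups listed in the theorem; only $\mathbb{SO}(n)$ is. I would resolve this with the splitting $\mathbb{O}(n)=\mathbb{SO}(n)\sqcup \mathbb{SO}(n)J$ introduced just above: writing a Haar element of the coset $\mathbb{SO}(n)J$ as $Q_0 J$ with $Q_0$ Haar on $\mathbb{SO}(n)$, and using $F(Q_0 J)=F(Q_0)$ (which holds because $J$ is orthogonal and commutes with the diagonal $D_2$), the law of $F$ under Haar measure on $\mathbb{O}(n)$ coincides with its law under Haar measure on $\mathbb{SO}(n)$. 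Hence it suffices to prove the tail bound on $\mathbb{SO}(n)$, where \cref{elizabeths} applies with a single factor and $\min\{n_1\}=n$, and the same symmetry also gives $\mathbb{E}_{\mathbb{O}(n)}[F]=\mathbb{E}_{\mathbb{SO}(n)}[F]$.

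The final step is to pass from the sharp denominator $24L^2=96\norm{D_1^2}_F^2\norm{D_2^2}_2^2$ to the cleaner expression $96\norm{D_1}_F^4\norm{D_2}_2^4$ in the statement. Here I would invoke the two elementary bounds $\norm{D_1^2}_F \leq \norm{D_1}_F^2$, valid because $\sum_i \sigma_i^4 \leq (\sum_i \sigma_i^2)^2$ for the diagonal entries $\sigma_i$ of $D_1$, and $\norm{D_2^2}_2=\norm{D_2}_2^2$, which is exact. Together these give $24L^2 \leq 96\norm{D_1}_F^4\norm{D_2}_2^4$. Because enlarging the denominator only weakens (increases) the right-hand side of the exponential bound, the chain $\mathbb{P}[F \leq \mathbb{E}[F]-t] \leq \exp(-(n-2)t^2/(24L^2)) \leq \exp(-(n-2)t^2/(96\norm{D_1}_F^4\norm{D_2}_2^4))$ closes the argument.

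The routine parts are the Lipschitz transfer to $-F$ and the two norm inequalities; I expect the only genuine subtlety to be the reduction from $\mathbb{O}(n)$ to $\mathbb{SO}(n)$, since the cited theorem does not list $\mathbb{O}(n)$ and one must verify that the right-$J$-invariance of $F$ makes its distribution insensitive to which coset $Q$ lands in. Once that symmetry is secured, the corollary is essentially a one-line consequence of \cref{elizabeths}.
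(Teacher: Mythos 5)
Your proposal is correct and follows essentially the same route as the paper: apply \cref{elizabeths} to $-F$ on $\mathbb{SO}(n)$, use the coset decomposition $\mathbb{O}(n)=\mathbb{SO}(n)\sqcup\mathbb{SO}(n)J$ together with the right-$J$-invariance of $F$ to transfer the bound to Haar measure on $\mathbb{O}(n)$, and plug in the Lipschitz constant. If anything, you are slightly more careful than the paper in justifying the passage from $L=2\norm{D_1^2}_F\norm{D_2^2}_2$ to the stated denominator via $\norm{D_1^2}_F\leq\norm{D_1}_F^2$ and $\norm{D_2^2}_2=\norm{D_2}_2^2$, a step the paper performs silently.
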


\begin{proof}
Let us apply the Theorem \ref{elizabeths}, with $X = \mathbb{SO}(n)$ the norm $\norm{\cdot}_F$, function $F = - f(Q) = - \norm{D_1 Q D_2}^2_F$, and the Lipschitz constant $L = 2 \norm{D_1}^2_F \norm{D_2}^2_2$. This gives us:
\begin{align}
    \mathbb{P} \left[  - \norm{D_1 Q D_2}^2_F \geq \mathbb{E}[- \norm{D_1 Q D_2}^2_F] + t \right] 
\leq \exp\left( -\frac{(n - 2)t^2}{24L^2} \right), \\
 \mathbb{P} \left[\norm{D_1 Q D_2}^2_F \leq \mathbb{E}[\norm{D_1 Q D_2}^2_F] - t \right] 
\leq \exp\left( -\frac{(n - 2)t^2}{96 \norm{D_1}^4_F \norm{D_2}_2^4} \right) \label{concentration}. 
\end{align}
\end{proof}

Since $\mathbb{O}(n)$ is disjoint union of two cosets of $\mathbb{SO}(n)$, the Haar measure of $\mathbb{O}(n)$ is average of the Haar measures on these two cosets, and $F$ takes identical values on the $Q \in \mathbb{SO}(n)$ and $QJ \in \mathbb{SO}(n)J$, we have the same concentration ineqality extended to $\mathbb{O}(n)$. Thus from \eqref{concentration}, we have ${\norm{D_1 Q D_2}^2_F} \approx \mathbb{E}[\norm{D_1 Q D_2}^2_F]$ with very high probability when $L^2/n \to 0$. 

Thus $\norm{AB}_F \rightarrow \frac{1}{\sqrt{n}}\norm{A}_F \norm{B}_F$ for a Haar distribution of singular vectors and any given singular value distribution in the limit of large dimensions. This included entries of the matrix from a standard normal distribution. The above results allows us to estimate $\norm{\Delta A \cdot \Delta B}_F$, and $\norm{AB}_F$ as well, given $\norm{\Delta A}_F$,$\norm{\Delta B}_F$, and $\norm{A}_F$,$\norm{B}_F$ when entries of these matrices are expected to equally likely to positive or negative values. On the other hand, to estimate an expected value of $\norm{AB}_F$ given $\norm{A}_F$ and $\norm{B}_F$, where entries of $A$, $B$ are unsigned values, we proceed to the following.

\begin{theorem}\label{th:uniform_distribution_norms}
    For two real random matrices $A^{m \times n}$ and $B^{n \times p}$ with entries $\sim$ $U(0,a)$, $\mathbb{E}[\norm{AB}_F^2] \approx \frac{9}{16}\norm{A}_F^2 \norm{B}_F^2$ for $n \gg 1$.
\end{theorem}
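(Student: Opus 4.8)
The plan is to compute $\mathbb{E}[\norm{AB}_F^2]$ directly, by expanding the Frobenius norm entrywise and exploiting the independence of all entries of $A$ and $B$. Writing $(AB)_{ik} = \sum_{j=1}^n A_{ij}B_{jk}$, I would start from
\[
\norm{AB}_F^2 = \sum_{i,k}\Big(\sum_{j} A_{ij}B_{jk}\Big)^2 = \sum_{i,k}\sum_{j,j'} A_{ij}A_{ij'}B_{jk}B_{j'k},
\]
and take expectations term by term, the only inputs being the first two moments of $U(0,a)$, namely $\mathbb{E}[X] = a/2$ and $\mathbb{E}[X^2] = a^2/3$.

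The expectation splits naturally into a diagonal part ($j=j'$) and an off-diagonal part ($j \neq j'$). By independence the diagonal terms contribute $\mathbb{E}[A_{ij}^2]\,\mathbb{E}[B_{jk}^2]$, while the off-diagonal terms factor completely into $\mathbb{E}[A_{ij}]\,\mathbb{E}[A_{ij'}]\,\mathbb{E}[B_{jk}]\,\mathbb{E}[B_{j'k}]$. Substituting the moments above and summing over the $mp$ output entries, I expect to obtain the exact identity
\[
\mathbb{E}[\norm{AB}_F^2] = mp\Big(n\,\tfrac{a^4}{9} + n(n-1)\,\tfrac{a^4}{16}\Big).
\]

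For $n \gg 1$ the off-diagonal contribution $n(n-1)a^4/16 \sim n^2 a^4/16$ dominates the $\mathcal{O}(n)$ diagonal piece, so $\mathbb{E}[\norm{AB}_F^2] \approx mp\,n^2 a^4/16$. Since $\mathbb{E}[\norm{A}_F^2] = mn\,a^2/3$ and $\mathbb{E}[\norm{B}_F^2] = np\,a^2/3$, whose product is $mp\,n^2 a^4/9$, dividing yields the asserted factor $\tfrac{9}{16}$ (equivalently, the front constant $c = 3/4$ in $\mathbb{E}[\norm{AB}_F] \approx c\,\norm{A}_F\norm{B}_F$).

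I expect the main point to be conceptual rather than computational. In contrast to the mean-zero situation of \cref{th:expected_norm_square}, where the cross-terms vanish and the front constant carries a $1/\sqrt{n}$ factor, here the nonzero mean $a/2$ makes the off-diagonal cross-terms the leading contribution, producing an $n$-\emph{independent} constant; identifying this is the crux. The remaining subtlety is justifying the ``$\approx$'' that replaces the random quantities $\norm{A}_F^2$ and $\norm{B}_F^2$ by their expectations, which requires noting that these norms concentrate around their means as $n \to \infty$, together with the discarding of the relative $\mathcal{O}(1/n)$ correction from the diagonal term.
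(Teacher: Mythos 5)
Your proposal is correct and follows essentially the same route as the paper: expand $\norm{AB}_F^2$ entrywise, split into diagonal and cross terms, use $\mathbb{E}[X]=a/2$ and $\mathbb{E}[X^2]=a^2/3$ to get $mp\bigl(n\tfrac{a^4}{9}+n(n-1)\tfrac{a^4}{16}\bigr)$, and compare with $\mathbb{E}[\norm{A}_F^2]\,\mathbb{E}[\norm{B}_F^2]=mpn^2a^4/9$. Your closing remark on concentration of $\norm{A}_F^2$ and $\norm{B}_F^2$ is a point the paper defers to a separate variance lemma rather than addressing inside this proof, but it does not change the argument.
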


\begin{proof}
    Given $A \in \mathbb{R}^{m\times n}$, and $B \in \mathbb{R}^{n\times p}$, we have: \begin{align}   \|AB\|^2_F&=\sum_{i=1}^m\sum_{j=1}^p(AB)_{ij}^2\notag\\
    &=\sum_{i=1}^m \sum_{j=1}^{p}\left\{\sum_{k=1}^n A_{ik}B_{kj}\right\}^2\notag\\
    &=\sum_{i=1}^m\sum_{j=1}^p\sum_{k=1}^n\left\{A_{ik}^2 B_{kj}^2 + \sum_{l\ne t,t=1}^nA_{il}B_{lj}A_{it}B_{tj}\right\}\label{eq:uniform_distribution_norms1}
    \end{align}
Now, for an expectation on the both sides of $\eqref{eq:uniform_distribution_norms1}$, and recalling that $\mathbb{E}[X]=\frac{a}{2}$, $\mathbb{E}[X^2]=\frac{a^2}{3}$, when $X \sim \mathcal{U}(0,a)$, we have: 
\begin{align}
    \mathbb{E}[\|AB\|_F^2] &=\sum_{i=1}^m\sum_{j=1}^p \sum_{k=1}^n\left\{\mathbb{E}[A_{ik}^2 B_{kj}^2] + \sum_{l\ne t, t=1}^n \mathbb{E}[A_{il}B_{lj}A_{it}B_{tj}]\right\}\notag\\
    &=\sum_{i=1}^m\sum_{j=1}^p\sum_{k=1}^n \left\{\left(\frac{a^2}{3}\right)^2+\sum_{l\ne t,t=1}^n\left(\frac{a}{2}\right)^4\right\} \notag\\ 
    &=mpn\frac{a^4}{9}+mpn(n-1)\frac{a^4}{16}\label{eq:uniform_distribution_norms2}
\end{align}

Similarly,
\begin{equation}
    \mathbb{E}[\|A\|_F^2] = mn\frac{a^2}{3} \text{ and }
    \mathbb{E}[\|B\|_F^2] = np\frac{a^2}{3}\label{eq:uniform_distribution_norms3}
\end{equation}

Using \eqref{eq:uniform_distribution_norms2} and \eqref{eq:uniform_distribution_norms3}, we have:
\begin{align}
    \frac{\mathbb{E}[\|AB\|_F^2]}{\mathbb{E}[\|A\|_F^2].\mathbb{E}[\|B\|_F^2]}&=\frac{mpn\frac{a^4}{9}+mpn(n-1)\frac{a^4}{16}}{mn^2p\frac{a^4}{9}} 
    = \frac{\frac{a^4}{9n}+\frac{(n-1)a^4}{16n}}{\frac{a^4}{9}} \\
    & \approx \frac{9}{16}\text{for } n \gg 1
\end{align}

\end{proof}

\begin{lemma}
    Let $Z = \norm{AB}_F^2$ with $n \times n$ real matrices $A$ and $B$ having entries from $\mathcal{U}(0,a)$; then $\text{Var}(Z) = \mathcal{O}(n^7)$. We also have  $ \mathbb{P}(|Z -  \mathbb{E}[Z]| > \epsilon \mathbb{E}[Z]) = \frac{1}{\epsilon^2} \mathcal{O}\left(\frac{1}{n}\right).$\label{th:tailbound_uniform_case}  
\end{lemma}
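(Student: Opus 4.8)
The plan is to compute the variance directly from the combinatorial expansion of $Z$ and then convert it into the tail bound via Chebyshev's inequality. First I would specialize \cref{th:uniform_distribution_norms} to the square case $m=p=n$, which gives $\mathbb{E}[Z] = n^3\frac{a^4}{9} + n^3(n-1)\frac{a^4}{16} = \Theta(n^4)$; this is the quantity against which the fluctuations of $Z$ must be measured. Next I would write $Z = \sum_{i,j,k,l} A_{ik}A_{il}B_{kj}B_{lj}$, i.e. $Z = \sum_\alpha T_\alpha$ with $\alpha = (i,j,k,l)$ and $T_\alpha = A_{ik}A_{il}B_{kj}B_{lj}$, so that $\mathrm{Var}(Z) = \sum_{\alpha,\beta} \mathrm{Cov}(T_\alpha,T_\beta)$.

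The main step, and the one I expect to be most delicate, is the combinatorial bookkeeping of this double sum. Because all matrix entries are independent, $\mathrm{Cov}(T_\alpha,T_\beta)=0$ unless the two monomials $T_\alpha$ and $T_\beta$ share at least one common entry of $A$ or of $B$. Each such coincidence --- e.g. $A_{ik}=A_{i'k'}$ or $B_{lj}=B_{l'j'}$ --- forces two of the free indices to be equal, so it lowers the number of free indices in the pair $(\alpha,\beta)$ from eight to six. Carefully enumerating the ways two tuples can overlap (matching an $A$-entry forces a common row index and a common column index, and likewise for a $B$-entry), one finds that for each fixed $\alpha$ there are at most $\mathcal{O}(n^2)$ tuples $\beta$ with nonzero covariance, hence at most $\mathcal{O}(n^6)$ contributing pairs in total, and in particular $\mathcal{O}(n^7)$. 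Since the entries lie in $[0,a]$, every surviving covariance is bounded in absolute value by a constant multiple of $a^8$, uniformly in $n$. Multiplying the number of contributing pairs by this uniform bound gives $\mathrm{Var}(Z) = \mathcal{O}(n^7)$. Care must be taken here to treat the diagonal contributions where $k=l$ (for which $T_\alpha$ collapses to $A_{ik}^2B_{kj}^2$ and involves higher moments such as $\mathbb{E}[X^2]$ rather than $\mathbb{E}[X]^2$) separately, but these form a strictly lower-order family and do not affect the bound.

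Finally I would invoke Chebyshev's inequality, $\mathbb{P}(|Z-\mathbb{E}[Z]| > \epsilon\,\mathbb{E}[Z]) \le \mathrm{Var}(Z)/(\epsilon\,\mathbb{E}[Z])^2$. Substituting $\mathrm{Var}(Z)=\mathcal{O}(n^7)$ and $\mathbb{E}[Z]=\Theta(n^4)$, so that $(\mathbb{E}[Z])^2 = \Theta(n^8)$, yields $\mathbb{P}(|Z-\mathbb{E}[Z]|>\epsilon\,\mathbb{E}[Z]) = \frac{1}{\epsilon^2}\,\mathcal{O}(1/n)$, as claimed. The essential difficulty lies entirely in the second paragraph: organizing the overlap cases so as to be certain that no family of index coincidences contributes more than $\mathcal{O}(n^7)$ pairs, and confirming that each covariance is uniformly $\mathcal{O}(1)$ in $n$; once the variance order is established, the probabilistic conclusion is immediate.
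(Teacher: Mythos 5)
Your proposal is correct and reaches the stated conclusion, but the variance bound is organized differently from the paper's. The paper decomposes $Z=\sum_{i,j}Y_{ij}$ with $Y_{ij}=(AB)_{ij}^2$, bounds each $\mathrm{Var}(Y_{ij})\leq \mathbb{E}[(AB)_{ij}^4]=\mathcal{O}(n^4)$ via Cauchy--Schwarz on the row--column inner product, observes that $\mathrm{Cov}(Y_{ij},Y_{i'j'})=0$ unless $i=i'$ or $j=j'$, and handles the $\mathcal{O}(n^3)$ surviving pairs by Cauchy--Schwarz on covariances, giving $\mathcal{O}(n^3)\cdot\mathcal{O}(n^4)=\mathcal{O}(n^7)$. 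You instead expand all the way down to the degree-four monomials $T_{(i,j,k,l)}=A_{ik}A_{il}B_{kj}B_{lj}$ and count overlapping pairs directly: each shared entry pins two of the eight free indices, so each $\alpha$ has $\mathcal{O}(n^2)$ correlated partners, and with entries bounded in $[0,a]$ each surviving covariance is $\mathcal{O}(a^8)$, giving $\mathrm{Var}(Z)=\mathcal{O}(n^6)$ --- a factor of $n$ sharper than the paper's bound (the paper's fourth-moment estimate $\mathbb{E}[(AB)_{ij}^4]=\mathcal{O}(n^4)$ is lossy precisely because $(AB)_{ij}^2$ concentrates around its mean of order $n^2$, so its variance is really $\mathcal{O}(n^3)$). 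Your finer count costs more combinatorial care (including the $k=l$ diagonal family you correctly set aside) and leans on boundedness of the uniform law where the paper only assumes finite fourth moments, but it buys a tail probability of order $\epsilon^{-2}\mathcal{O}(1/n^2)$ rather than $\epsilon^{-2}\mathcal{O}(1/n)$; both of course imply the lemma as stated, and the final Chebyshev step is identical in the two arguments.
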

\begin{proof}
    Let $Y_{ij} = (AB)^2_{i,j}$. Then we have:
    \begin{align}
        Z &= \sum_{i,j} Y_{i,j} \text{ and } \mathbb{E}[Z] = \mathcal{O}(n^4).\\
        \text{Var}(Z) &= \sum_{i,j} \text{Var}(Y_{i,j}) + 2\sum_{(i,j) \neq (i',j')} \text{Cov}(Y_{i,j},Y_{i',j'}). \label{eq:cov_uniform_D} 
    \end{align}

We have $\text{Var}(Y_{i,j}) = \text{Var}((AB)_{i,j}^2) \leq \mathbb{E}[(AB)^4_{i,j}] \leq \mathbb{E}[\norm{A_{(i)}}^4] \mathbb{E}[\norm{B^{(j)}}^4]$, expanding the expectation of fourth power of norms and assuming finite fourth moment on the entries of the matrix, we have $\text{Var}(Y_{i,j}) = \mathcal{O}(n^4)$.  
We also have $\text{Cov}(Y_{i,j}Y_{i',j'}) = 0$ whenever $i \neq i'$ and $j \neq j'$. 
Further, we apply Cauchy-Schwarz inequality on each of the other terms in the sum in \eqref{eq:cov_uniform_D} to get
$\text{Cov}(Y_{i,j}Y_{i',j'}) \leq \sqrt{\text{Var}(Y_{i,j} )\text{Var}(Y_{i',j'})} = \mathcal{O}(n^4)$.
We know that there are $\mathcal{O}(n^3)$ such terms given by the combination of indices for which either $i = i'$ or $j = j'$. Thus, we have $\text{Var}(Z) = \mathcal{O}(n^7)$. 

Now, we apply the Chebyshev inequality:
\begin{align*}
    \mathbb{P}(|Z -  \mathbb{E}[Z]| > \epsilon \mathbb{E}[Z]) \leq \frac{\text{Var}(Z)}{\epsilon^2 \mathbb{E}[Z]^2} = \frac{1}{\epsilon^2}\mathcal{O}\left(\frac{1}{n} \right)
\end{align*}
\end{proof}

\begin{theorem}
The expected relative error in the first-order multiplication of $A$ and $B$ can be upper bound by -

Case-1: Mean-zero entries
\begin{equation*}
    \mathbb{E}\left[\frac{\norm{\Delta A \cdot \Delta B}_F}{\norm{AB}_F}\right] \leq \frac{1}{\sqrt{1-\epsilon}} \frac{\norm{\Delta A}_F \norm{\Delta B}_F}{ \norm{A}_F \norm{B}_F } + C \sqrt{\mathbb{P}(G^c)}.
\end{equation*}

Case-2: Unsigned entries
\begin{equation*}
    \mathbb{E}\left[\frac{\norm{\Delta A \cdot \Delta B}_F}{\norm{AB}_F}\right] \leq \frac{1}{\sqrt{n(1-\epsilon)}} \frac{4\norm{\Delta A}_F \norm{\Delta B}_F}{3 \norm{A}_F \norm{B}_F } + C \sqrt{\mathbb{P}(G^c)}.
\end{equation*}   
\end{theorem}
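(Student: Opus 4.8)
The plan is to control the expectation of the relative error $R := \norm{\Delta A \cdot \Delta B}_F / \norm{AB}_F$ by splitting the sample space according to whether the denominator stays close to its mean. I would introduce the ``good'' event
\[
G := \left\{ \norm{AB}_F^2 \geq (1-\epsilon)\,\mathbb{E}\!\left[\norm{AB}_F^2\right] \right\},
\]
and write $\mathbb{E}[R] = \mathbb{E}[R\,\mathbf 1_G] + \mathbb{E}[R\,\mathbf 1_{G^c}]$, where $\mathbf 1_G$ denotes the indicator of $G$. On $G$ the denominator is bounded below by $\sqrt{(1-\epsilon)\,\mathbb{E}[\norm{AB}_F^2]}$, so pulling this deterministic bound out and applying Jensen's inequality $\mathbb{E}[X]\le\sqrt{\mathbb{E}[X^2]}$ to the numerator yields
\[
\mathbb{E}[R\,\mathbf 1_G] \;\le\; \frac{\mathbb{E}\!\left[\norm{\Delta A \cdot \Delta B}_F\right]}{\sqrt{(1-\epsilon)\,\mathbb{E}[\norm{AB}_F^2]}} \;\le\; \frac{\sqrt{\mathbb{E}\!\left[\norm{\Delta A \cdot \Delta B}_F^2\right]}}{\sqrt{(1-\epsilon)\,\mathbb{E}[\norm{AB}_F^2]}}.
\]
The remaining work is then reduced to inserting the second-moment estimates already proved in this section.

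For Case~1, both $AB$ and $\Delta A\cdot\Delta B$ are products of matrices whose singular vectors are Haar distributed: the SVD truncation leaves the singular vectors $Q_1,Q_2$ of $A$ unchanged, so $\Delta A \cdot \Delta B = \tilde D_1\, Q\, \tilde D_2$ with the same Haar $Q = Q_2Q_3$ that appears for $AB$. Hence \cref{th:expected_norm_square} applies to both, giving $\mathbb{E}[\norm{AB}_F^2] = \tfrac1n\norm{A}_F^2\norm{B}_F^2$ and $\mathbb{E}[\norm{\Delta A\cdot\Delta B}_F^2] = \tfrac1n\norm{\Delta A}_F^2\norm{\Delta B}_F^2$; the factors of $1/n$ cancel and the first term collapses to $\frac{1}{\sqrt{1-\epsilon}}\frac{\norm{\Delta A}_F\norm{\Delta B}_F}{\norm{A}_F\norm{B}_F}$. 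For Case~2 the two estimates must differ: the denominator $AB$ has unsigned entries, so I would use \cref{th:uniform_distribution_norms}, $\mathbb{E}[\norm{AB}_F^2]\approx\tfrac{9}{16}\norm{A}_F^2\norm{B}_F^2$, while the residue product $\Delta A\cdot\Delta B$ is effectively mean-zero (removing the dominant, e.g.\ DC, components leaves a signed residue) and thus obeys the Haar-type estimate $\tfrac1n\norm{\Delta A}_F^2\norm{\Delta B}_F^2$. The ratio of the square roots then produces exactly the factor $\tfrac{4}{3\sqrt{n}}$, giving the stated $\frac{1}{\sqrt{n(1-\epsilon)}}\frac{4\norm{\Delta A}_F\norm{\Delta B}_F}{3\norm{A}_F\norm{B}_F}$.

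The complementary term is handled by Cauchy--Schwarz, $\mathbb{E}[R\,\mathbf 1_{G^c}] \le \sqrt{\mathbb{E}[R^2]}\,\sqrt{\mathbb{P}(G^c)} =: C\sqrt{\mathbb{P}(G^c)}$, and I would then argue that $\mathbb{P}(G^c)$ is genuinely negligible: it is exponentially small in $n$ by the concentration bound of \cref{th:concentrationprob} in Case~1, and $\mathcal{O}(1/n)$ by the Chebyshev estimate of \cref{th:tailbound_uniform_case} in Case~2, with $G$ chosen as the corresponding high-probability event in each case. Summing the two contributions gives the claimed bounds.

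The main obstacle is justifying that the constant $C=\sqrt{\mathbb{E}[R^2]}$ is finite and uniform enough to be meaningful, since $R$ carries $\norm{AB}_F$ in its denominator and that norm, though concentrated, is not bounded below deterministically. I would close this gap either by exhibiting an almost-sure control of the form $\norm{\Delta A\cdot\Delta B}_F \le \norm{\Delta A}_F\norm{\Delta B}_2$ against a lower bound on $\norm{AB}_F$ on the support, or by restricting attention to $\norm{AB}_F>0$ and absorbing the worst-case ratio into $C$; establishing that the second moment $\mathbb{E}[R^2]$ is actually controlled (not merely finite) is the delicate step, and it is precisely the smallness of the compensating factor $\sqrt{\mathbb{P}(G^c)}$ supplied by the concentration results that makes this second term harmless.
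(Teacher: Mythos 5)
Your proposal is correct and follows essentially the same route as the paper: the same good-event split on $\norm{AB}_F^2 \geq (1-\epsilon)\mathbb{E}[\norm{AB}_F^2]$, the same second-moment inputs (\cref{th:expected_norm_square} for the Haar numerator and denominator in Case 1, \cref{th:uniform_distribution_norms} for the unsigned denominator in Case 2, yielding the $4/(3\sqrt{n})$ factor), and the same tail controls via \cref{th:concentrationprob} and \cref{th:tailbound_uniform_case}. The only cosmetic difference is that the paper applies Cauchy--Schwarz before splitting $\mathbb{E}[Z^{-1}]$ and then absorbs the bad-event constant via the deterministic bound $\norm{AB}_F \geq \sigma_{\min}(A)\norm{B}_F$ --- precisely the fix you identify at the end for controlling $C$.
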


\begin{proof}
    Consider $X = \norm{\Delta A.\Delta B}_F]$ and $Z =  \norm{AB}^2_F >0$.
    Then we have, 
    \begin{align*}
        \mathbb{E}\left[ \frac{X}{\sqrt{Z}}\right] & = \mathbb{E}[X Z^{-1/2}] \\
        &\leq \mathbb{E}[X^2]^{1/2} \mathbb{E}[Z^{-1}]^{1/2}  \text{ Cauchy-Schwarz inequality.}
    \end{align*}

Let $\mu_Z = \mathbb{E}[Z]$. Then the good event be $G = \{ Z : Z \geq (1-\epsilon) \mu_Z \}$ for $0 < \epsilon < 1$.  
Then we split $\mathbb{E}[Z^{-1}] = \mathbb{E}[Z^{-1} \mathbf{1}_G] + \mathbb{E}[Z^{-1} \mathbf{1}_{G^c}]$. 
This further gives us:
\begin{align*}
    Z^{-1} &\leq \frac{1}{(1-\epsilon) \mu_Z}  \text{       on $G$ }\\
    \mathbb{E}[Z^{-1}] &\leq \frac{1}{(1-\epsilon) \mu_Z}  \text{       on $G$ }
\end{align*}

Now we use the fact that $\sqrt{a+b} \leq \sqrt{a} + \sqrt{b}$, to write:
\begin{align}
    \mathbb{E}\left[ \frac{X}{\sqrt{Z}}\right]  &\leq \mathbb{E}[X^2]^{1/2} \left(  \frac{1}{\sqrt{(1-\epsilon) \mu_Z}} + \sqrt{\mathbb{E}[Z^{-1} \mathbf{1}_{G^c}]} \right). \label{expectineq} 
\end{align}

\begin{align}   \mathbb{E}\left[\frac{X}{\sqrt{Z}}\right]  &\leq  \frac{\mathbb{E}[X^2]^{1/2}}{\sqrt{(1-\epsilon) \mu_Z}}  + R_\epsilon \label{eq:expectation_bound}
\end{align}

With $R_{\epsilon} = \mathbb{E}[X^2]^{1/2}\sqrt{\mathbb{E}[Z^{-1} \mathbf{1}_{G^c}]}$. We know that $\norm{AB}_F \geq \sigma_{min}(A) \norm{B}_F$, so we have:
\begin{align*}
    R_\epsilon &\leq \frac{\mathbb{E}(X^2)^{1/2}}{\sigma_{min}(A) \norm{B}_F} \sqrt{\mathbb{P}(G^c)}  \\
    &\leq C \sqrt{\mathbb{P}(G^c)}.
\end{align*}

Now for the case I : The singular vectors of $A, B, \Delta A, \Delta B$ are from the Haar measure. So we use Lemma \ref{th:expected_norm_square} in both numerator and denominator of \eqref{eq:expectation_bound} to get:
\begin{align}
    \mathbb{E}\left[ \frac{X}{\sqrt{Z}}\right]  &\leq \frac{1}{\sqrt{1-\epsilon}} \frac{\norm{\Delta A}_F \norm{\Delta B}_F}{ \norm{A}_F \norm{B}_F } + C \sqrt{\mathbb{P}(G^c)}.
\end{align}

From the Corollary \ref{th:concentrationprob}, we have $\mathbb{P}(G^c) \leq e^{-nc\epsilon^2}$ so for some small $\delta >0$ and large enough $mn$ we have $R_\epsilon < \delta$. 

For the case II: When $\Delta A, \Delta B$ have Haar singular vectors, but $A,B$ have uniform entries from a positive distribution, we use \cref{th:uniform_distribution_norms} for the expected value $\mu_z$ in \eqref{eq:expectation_bound} to get:

\begin{align*}
    \mathbb{E}\left[ \frac{X}{\sqrt{Z}}\right]  &\leq \frac{1}{\sqrt{n(1-\epsilon)}} \frac{4\norm{\Delta A}_F \norm{\Delta B}_F}{3 \norm{A}_F \norm{B}_F } + C \sqrt{\mathbb{P}(G^c)}.
\end{align*}

From \cref{th:tailbound_uniform_case}, we have the $\sqrt{\mathbb{P}(G^c)} = \mathcal{O}(\frac{1}{\sqrt{n}})$ and therefore this term approaches zero for large $n$. 
\end{proof}

\begin{corollary}
    The expected relative error in the first order approximation of the multiplication of two real matrices $A$, $B$ in general, can be bound as:
    \begin{align*}
    \mathbb{E}\left[ \frac{\norm{\Delta A.\Delta B}_F}{\norm{AB}_F}\right]  &\leq \frac{1}{\sqrt{n}} \frac{\norm{\Delta A}_F \norm{\Delta B}_F}{\norm{M}_F }
\end{align*}
when using the truncated SVD, and estimated as:
    \begin{align*}
    \mathbb{E}\left[ \frac{\norm{\Delta A.\Delta B}_F}{\norm{AB}_F}\right]  &\approx \frac{1}{\sqrt{n}} \frac{\norm{\Delta A}_F \norm{\Delta B}_F}{\norm{M}_F }
\end{align*}
when using the truncated circulant and Fourier decompositions, where the approximation $M=AB-\Delta A\Delta B$. \label{th:corollary_error_general}
\end{corollary}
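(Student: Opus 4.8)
The plan is to build everything on the exact identity supplied by \eqref{eq:first_order_expansion}, namely $AB = M + \Delta A\cdot\Delta B$ with $M = \sum_i A_i\cdot B + \sum_k \Delta A\cdot B_k$, so that the residual we must control is precisely $\Delta A\cdot\Delta B = AB - M$. Since the whole purpose is to avoid forming $AB$, I would replace $\norm{AB}_F$ in the denominator by the computed quantity $\norm{M}_F$. The structural fact that upgrades this replacement from an approximation to a genuine inequality in the SVD case is that $M$ and $\Delta A\cdot\Delta B$ are orthogonal with respect to the Frobenius inner product; establishing this orthogonality is the first goal.

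For the truncated SVD I would write $A = A_\parallel + \Delta A$ and $B = B_\parallel + \Delta B$, with $A_\parallel = \sum_{i<k}\sigma_i^A u_i v_i^*$ the retained part and $\Delta A = \sum_{i\ge k}\sigma_i^A u_i v_i^*$ the residual (and similarly for $B$), so that $M = A_\parallel B_\parallel + A_\parallel\Delta B + \Delta A B_\parallel$. Orthogonality of the left singular vectors of $A$ gives $A_\parallel^*\Delta A = 0$, and orthogonality of the right singular vectors of $B$ gives $\Delta B\,B_\parallel^* = 0$. Expanding $\inner{M}{\Delta A\Delta B} = \operatorname{Tr}\!\big[M^*\,\Delta A\Delta B\big]$ into its three pieces, the first two contain the factor $A_\parallel^*\Delta A$ directly, while the third, after a cyclic permutation of the trace, contains the factor $\Delta B\,B_\parallel^*$; hence each piece vanishes and $\inner{M}{\Delta A\Delta B}=0$.

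With orthogonality established, the Pythagorean identity gives $\norm{AB}_F^2 = \norm{M}_F^2 + \norm{\Delta A\Delta B}_F^2 \ge \norm{M}_F^2$, hence $\norm{AB}_F \ge \norm{M}_F$ and the pointwise bound $\frac{\norm{\Delta A\Delta B}_F}{\norm{AB}_F} \le \frac{\norm{\Delta A\Delta B}_F}{\norm{M}_F}$. Treating the computed $\norm{M}_F$ as a fixed normalizing constant, I would take expectations and control the numerator by Jensen's inequality together with the exact second-moment identity behind \cref{th:expected_norm_square}, namely $\mathbb{E}[\norm{\Delta A\Delta B}_F] \le \big(\mathbb{E}[\norm{\Delta A\Delta B}_F^2]\big)^{1/2} = \tfrac{1}{\sqrt n}\norm{\Delta A}_F\norm{\Delta B}_F$, which is valid because the residuals $\Delta A,\Delta B$ have Haar-distributed singular vectors. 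Combining the two displays yields the claimed SVD upper bound, the $\tfrac{1}{\sqrt n}$ prefactor being exactly the numerator constant.

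For the truncated circulant and Fourier decompositions the same skeleton applies, except that $\inner{M}{\Delta A\Delta B}$ no longer vanishes identically: products of circulant (respectively Fourier) components mix indices, so the retained and residual parts are not mutually orthogonal after multiplication. One instead argues that this cross term is small compared with $\norm{M}_F^2$, so that $\norm{AB}_F\approx\norm{M}_F$ and the inequality relaxes to the stated approximation. I expect this last point—quantifying, or arguing the smallness of, the cross term in the absence of exact subspace orthogonality—to be the main obstacle, since it is precisely what distinguishes the rigorous bound of the SVD case from the heuristic estimate in the circulant and Fourier cases.
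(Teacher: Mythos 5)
Your proposal is correct and follows essentially the same route as the paper: the key step $\norm{AB}_F \geq \norm{M}_F$ is exactly what the paper establishes by expanding $\norm{M}_F^2$ index-wise in the singular bases of $A$ and $B$ (your Frobenius-orthogonality argument makes the underlying Pythagorean identity $\norm{AB}_F^2 = \norm{M}_F^2 + \norm{\Delta A\,\Delta B}_F^2$ explicit, which the paper only uses as an inequality), and the numerator is handled identically via the second-moment identity of Lemma 3.1. The only cosmetic difference is in the circulant/Fourier case, where you justify the ``estimate'' by approximate orthogonality while the paper invokes the triangle inequality $\left|\norm{M}_F - \norm{\Delta A\,\Delta B}_F\right| \leq \norm{AB}_F \leq \norm{M}_F + \norm{\Delta A\,\Delta B}_F$; both are equally heuristic and lead to the same conclusion.
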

\begin{proof}
    Let $A=Q_1 \Sigma^AV^T$ and $B=U\Sigma^BQ_2$ be the singular value decompositions for real matrices $A$ and $B$, with corresponding right singular vectors $\hat{v}_i$ and left singular vectors $\hat{u}_i$. Using the truncated decomposition, we know:
    \begin{align*}
    M &= (A-\Delta A)B + \Delta A (B-\Delta B)\\
    \implies\norm{M}_F^2 &= \sum_{i=1}^k \sum_{j=1}^n (\hat{v}_i^T\hat{u}_j\sigma_i^A \sigma_j^B)^2 + \sum_{i=k+1}^n \sum_{j=1}^k (\hat{v}_i^T\hat{u}_j\sigma_i^A \sigma_j^B)^2\\ &\leq \sum_{i=1}^n \sum_{j=1}^n (\hat{v}_i^T\hat{u}_j\sigma_i^A \sigma_j^B)^2 = \norm{AB}_F^2
    \end{align*}
Using the above known lower bound for the denominator, and with $\epsilon,\mathbb{P}(G^c)=0$, and the expected value $\norm{\Delta A. \Delta B}$ for the Haar measure in \ref{th:expected_norm_square} for the numerator of \eqref{eq:expectation_bound}, we get an upper bound:
    \begin{align*}
    \mathbb{E}\left[ \frac{\norm{\Delta A.\Delta B}_F}{\norm{AB}_F}\right]  &\leq \frac{1}{\sqrt{n}} \frac{\norm{\Delta A}_F \norm{\Delta B}_F}{\norm{M}_F}.
\end{align*}
In the case of other truncated decompositions, we know $|\norm{M}_F-\norm{\Delta A. \Delta B}_F| \leq \norm{AB}_F \leq \norm{M}_F + \norm{\Delta A. \Delta B}_F$. Thus the R.H.S of the above becomes an estimate. 
\end{proof}

In the next part of the analysis, we derive bounds for the norm of the residual in the approximate multiplication without assuming the independence of entries of the residue matrices. In turn we include the variables available in the partial SVD and full circulant decomposition in our analysis, and this may result in a marginal increase of the cost of  estimation. When these residue spectra are flat or nearly constant, we retrieve the earlier results on the error in the multiplication, even for dependent entries in the matrices.

\begin{theorem}[Residual bound for SVD]
Let $A, B \in \R^{n \times n}$, let $A_k, B_k$ be their best rank-$k$ approximations, and let $\DA = A - A_k$, $\DB = B - B_k$. Similarly, let $\rtilde(\DA)$, $\rtilde(\DB)$ be the stable ranks of $\DA$ and $\DB$ respectively. Then: $$\normF{\DA \DB}^2 \leq \frac{1}{\sqrt{\rtilde(\DA)\rtilde(\DB)}}\normF{\DA}^2 \normF{\DB}^2.$$
where $\rtilde(\DA):= \frac{\normF{\DA}^2}{(\sigma_{k+1}^A)^2} = \frac{\sum_{i=k+1}^n (\sigma_i^A)^2}{(\sigma_{k+1}^A)^2}$, and correspondingly for $\rtilde(\DB)$.
\end{theorem}

\begin{proof}
Given the right singular vectors $\hat{v}$ of $A$ and left singular vectors $\hat{u}$ of $B$, we know:
\begin{align*}
   \normF{\DA \DB}^2 &= \sum_{i=k+1}^n \sum_{j=k+1}^n \Bigl(\hat{v}_i^T\hat{u}_j\sigma_i^A \sigma_j^B\Bigr)^2\\
   & \leq \sum_{i=k+1}^n \bigl(\sigma_i^A\bigr)^2  \sum_{j=k+1}^n (\hat{v}_i^T\hat{u}_j)^2 \bigl(\sigma_j^B\bigr)^2 \text{ where } \sum_j (\hat{v}_i^T\hat{u}_j)^2\leq 1\\
   & \leq \sum_{i=k+1}^n \bigl(\sigma_i^A\bigr)^2 \bigl(\sigma_{k+1}^B\bigr)^2 = \normF{\DA}^2 \norm{\DB}_2^2
\end{align*}
giving us,
\begin{align}
\normF{\DA \DB}^2 & \leq \normF{\DA}^2 \norm{\DB}_2^2 \text{ and similarly}\\
 \normF{\DA \DB}^2 & \leq \norm{\DA}_2^2 \normF{\DB}^2.
\end{align}
Combining both the above inequalities:
\begin{equation*}
   \normF{\DA \DB}^2 \leq \norm{\DA}_2 \norm{\DB}_2\normF{\DA} \normF{\DB} 
\end{equation*}
Substituting $\rtilde(\DA)= \frac{\normF{\DA}^2}{\norm{\DA}_2^2}$ and $\rtilde(\DB)= \frac{\normF{\DB}^2}{\norm{\DB}_2^2}$ in the above, we complete the proof.
 
\end{proof}
Note that $\sigma_{k+1}^A$ and $\sigma_{k+1}^B$ are available in \cref{algo:rsvd} used for a truncated SVD with $k+1$ components. This results in $\sim 2n^2$ additional operations each for the two matrices in evaluating $k+1$ components instead of $k$ components, and thus the L.H.S of the above relation can be bound at approximately thrice the cost of the corresponding term in \cref{th:corollary_error_general}.

\begin{corollary}
    When all the singular values of $\DA$ are identical, $\rtilde(\DA)=n-k$, and similarly $\rtilde(\DB)=n-k$ for identical singular values in $\Delta B$. In such cases, we have:
    $$\normF{\DA \DB} \leq \frac{1}{\sqrt{n-k}}\normF{\DA} \normF{\DB}$$
    which for $k \ll n$ is well approximated by the corresponding relation of corollary 3.7.
\end{corollary}

\begin{lemma}\label{lem:circulant-conjugation}

Let $R \in \mathbb{C}^{n \times n}$ be circulant, i.e. $R(i,k) = r_{k-i \ (\mathrm{mod}\ n)}.$ Let $D = \mathrm{diag}(\omega^0, \omega^1, \dots, \omega^{n-1})$ with $\omega = e^{-2\pi i / n}.$ Then for any integer $j$, the matrix $\widetilde{R} := D^j R D^{-j}$ is also circulant. Consequently, $D^j R = \widetilde{R} D^j.$
\end{lemma}

\begin{proof}
We expand the $(i,k)$ entry of the conjugated circulant as:
\[
(D^j R D^{-j})(i,k)
= D^j(i,i)\, R(i,k)\, D^{-j}(k,k).
\]
Using $D^j(i,i) = \omega^{ij}$ and $D^{-j}(k,k) = \omega^{-kj}$, we obtain
\[
(D^j R D^{-j})(i,k)
=
\omega^{ij} \, R(i,k) \, \omega^{-kj}
=
R(i,k)\, \omega^{j(i-k)}.
\]
Since $R(i,k) = r_{k-i}$, this becomes
\[
(D^j R D^{-j})(i,k)
=
r_{k-i}\, \omega^{-j(k-i)},
\]
which depends only on $(k-i) \bmod n$. Hence $\widetilde{R}$ is circulant.
\end{proof}

\begin{definition}[Spectral concentration parameter]
Let $\tilde{A}$ be obtained by retaining the index set $S^A$ of $k$ largest circulant components, and $\DA = A - \tilde{A}$. For $\Delta A$, define for each Fourier mode $i$:
\[
X_i := \sum_{j \notin S^A} |\lambda_i(R_j^A)|^2.
\]
Let us define,
\[
\mu_A := \frac{\max_i X_i}{\frac{1}{n}\|\Delta A\|_F^2}.
\]
Similarly, define $\mu_B$ for $\Delta B$.
\end{definition}

\begin{Remark}\label{rem:Xi}
Let $\underline{\Delta A}(i,j) = \Lambda_j(i,i)$, where $\Delta A=\sum_0^{n-1}C^j\Lambda_j$ i.e. the cycles of $\Delta A$ as the entries of corresponding columns of $\underline{\Delta A}$. Then the $j^{\text{th}}$ row of $\frac{1}{\sqrt{n}} W \underline{\Delta A}$ is the first column of circulant matrix 
component $R_j$, where the unitary matrix $W(p,q) = \frac{1}{\sqrt{n}}e^{-i \frac{2\pi pq}{n}}$, with $p,q=0,1,2\cdots n-1$. The $j^{\text{th}}$ column of the matrix $W\underline{\Delta A}^TW^T$ has all the eigenvalues of $R_j$. Similarly, the 2-norm of the $i^{\text{th}}$ row of the matrix $W \underline{\Delta A}^T W^T$ is $\sqrt{X_i}$ representing a Fourier mode $i$, therefore:
\[\mu_A := \frac{\max_i X_i}{\frac{1}{n}\|\Delta A\|_F^2 } \leq \frac{\|\underline{\Delta A}\|^2_2}{\frac{1}{n}\|\Delta A\|_F^2}.\] Similarly, using the $j^{\text{th}}$ column of the matrix $W \underline{\Delta A}^T W^T$ i.e.  $\norm{(W \underline{\Delta A}^T W^T)^{(j)}}_2^2=\norm{R_j^A}_F^2$, we also have:
     \[ \frac{\max_{j \notin S^A} \normF{R_j^A}^2}{\frac{1}{n}\|\Delta A\|_F^2 } \leq \frac{\|\underline{\Delta A}\|^2_2}{\frac{1}{n}\|\Delta A\|_F^2}.\] 
\end{Remark}

\begin{theorem}[Residual bound for Circulant Decomposition]
\label{th:cd-deterministic}
Let $A, B \in \mathbb{R}^{n \times n}$ and let $\Delta A, \Delta B$ be their residuals from a truncated circulant decomposition. Then,
\[
\|\Delta A \Delta B\|_F
\;\le\;
\frac{(\mu_A \mu_B)^{1/4}}{\sqrt{n}}\;
\|\Delta A\|_F \, \|\Delta B\|_F.
\]
\end{theorem}

\begin{proof}
Using the circulant decomposition,
\[
\Delta A \Delta B
=
\sum_{j \notin S^A} \sum_{l \notin S^B}
R_j^A D^j R_l^B D^l.
\]
In the subsequent arguments we use $\sum_{j+l = s}^*$ to denote the indices $j \notin S^A$ and $l \notin S^B$ with their sum as $s$. 

Using the Lemma \ref{lem:circulant-conjugation}, we have $D^j R_l^B = \tilde{R}_l^B D^j$, and we obtain:
\[
\Delta A \Delta B
=
\sum_s R_s D^s,
\quad
R_s := \sum_{j+l=s}^* R_j^A \tilde{R}_l^B.
\]
By orthogonality, $\|\Delta A \Delta B\|_F^2 = \sum_s \|R_s\|_F^2.$ Since circulant matrices are diagonalized by the Fourier transform,
\[
\|R_s\|_F^2 = \sum_{i=1}^n |\lambda_i(R_s)|^2,
\quad
\lambda_i(R_s) = \sum_{j+l=s}^* \lambda_i(R_j^A)\lambda_i(R_l^B).
\]
Let $a_j^{(i)} := \lambda_i(R_j^A),
\quad b_l^{(i)} := \lambda_i(R_l^B).$

Then, $\lambda_i(R_s) = \sum_{j+l=s}^* a_j^{(i)} b_l^{(i)}$, and we have the circular convolution $\lambda_i(R_s) = (a^{(i)} \circledast b^{(i)})(s)$. 

Summing over $s$,
\[
\sum_s |\lambda_i(R_s)|^2
=
\|a^{(i)} \circledast b^{(i)}\|_2^2
\le
\|a^{(i)}\|_2^2 \|b^{(i)}\|_2^2.
\]
Thus,
\[
\sum_s |\lambda_i(R_s)|^2
\le
\left(\sum_{j \notin S^A} |\lambda_i(R_j^A)|^2\right)
\left(\sum_{l \notin S^B} |\lambda_i(R_l^B)|^2\right)
= X_i Y_i.
\]
Summing over, $\|\Delta A \Delta B\|_F^2
\le \sum_i X_i Y_i.$ Applying Cauchy--Schwarz inequality,
\[
\sum_i X_i Y_i
\le
\left(\sum_i X_i^2\right)^{1/2}
\left(\sum_i Y_i^2\right)^{1/2}.
\]

Given $\sum_i X_i = \|\Delta A\|_F^2$, we have
\[
\sum_i X_i^2 \le (\max_i X_i)\sum_i X_i
=
\frac{\mu_A}{n}\|\Delta A\|_F^4,
\]
and similarly for $Y_i$. Therefore,
\[
\|\Delta A \Delta B\|_F^2
\le
\frac{\sqrt{\mu_A \mu_B}}{n}
\|\Delta A\|_F^2 \|\Delta B\|_F^2.
\]

Applying square roots on both sides of the above completes the proof.
\end{proof}

\begin{corollary}\label{th:CD_final_bounds}
Note that $\mu_A, \mu_b \to 1$ indicating a constant energy in Fourier modes of the entries of circulants $R_k$ over all $k$, reduces the above in \cref{th:cd-deterministic} to the corresponding bounds for $\normF{\DA \DB}$ used in \cref{th:corollary_error_general}. Further, using Remark \ref{rem:Xi} for bounds on $\mu_A$ and $\mu_B$ along with the above results, we have: 
\begin{align*}
\normF{\Delta A \Delta B}^2 & \leq \norm{\underline{\Delta A}}_2 \norm{\underline{\Delta B}}_2 \normF{\Delta A} \normF{\Delta B} \\
\implies \normF{\Delta A \Delta B} & \leq (\norm{\underline{\Delta A}}_\infty \norm{\underline{\Delta A}}_1 \norm{\underline{\Delta B}}_\infty \norm{\underline{\Delta B}}_1)^{\frac{1}{4}} (\normF{\Delta A} \normF{\Delta B})^{\frac{1}{2}},          
\end{align*}
where we have used the inequality $\norm{\underline{\Delta A}}_2 \leq \sqrt{\norm{\underline{\Delta A}}_\infty \norm{\underline{\Delta A}}_1}$, and correspondingly for $\norm{\underline{\Delta B}}_2$. 
\end{corollary}
Considering the cost of evaluating the norms of matrices, note that the above expression in \cref{th:CD_final_bounds} can be evaluated at approximately thrice the cost of estimating the numerator in Corollary \ref{th:corollary_error_general}. One can directly estimate $\norm{\underline{\Delta A}}_2^2$ as well, using successive matrix-vector products in evaluating $\frac{\norm{(\underline{\Delta A}^T\underline{\Delta A})^kx}_2}{\norm{x}_2}$ with reasonably large enough values $k$ for some vector $x$. We can also evaluate $\mu_A$ and $\mu_B$ directly as part of \cref{algo:circulant_decomposition_fft} to use the corresponding bounds in \cref{th:cd-deterministic}. Note that $\underline{\DA}$ and $\underline{\DB}$ are explicit in the evaluation of the circulant decomposition.

Also, using Remark \ref{rem:Xi}, the condition $\frac{\|\underline{\Delta A}\|^2_2}{\frac{1}{n}\|\Delta A\|_F^2} \rightarrow 1$ is also sufficient to indicate a flat circulant spectra of $\DA$ where $\norm{R_j^A}_F^2=\normF{\Delta A}^2/n$. When this condition is satisfied by $\DA$ and $\DB$, we see that the results of \cref{th:CD_final_bounds} reduces to the corresponding bounds for $\normF{\DA \DB}$ used in \cref{th:corollary_error_general}.

\section{Numerical results} \label{sec:results}

In this section, relative errors $\frac{\norm{AB-M}_F}{\norm{AB}_F}$ and the required arithmetic operations are presented for various types of matrices multiplied using the approximate methods discussed earlier. The results also show the complementarity of the different decompositions for approximately multiplying matrices of various types. The distribution of the singular values and the magnitudes of circulant components for these example matrices are in \cref{sec:plots_SVD} and \cref{sec:plots_CD} for reference. It is significant that the relative errors can be estimated prior to the multiplication using $\norm{\Delta A}_F$, $\norm{\Delta B}_F$, $\norm{A}_F$, $\norm{B}_F$, and also post the approximation of the product using $\norm{\Delta A}_F$, $\norm{\Delta B}_F$ in $\mathcal{O}(n^2)$ arithmetic operations (see \cref{sec:summary_estimation} and \cref{sec:analysis}). Note that one can evaluate a decomposition to the required approximation for a given matrix in the pair, or ascertain the suitability of a decomposition.

\subsection{Truncated Singular Value Decomposition (SVD)} \label{sec:results_truncated_SVD}
Averaging results over an identical set of matrices for multiple methods relevant to SVD, we present a comparison between 1) a randomized low rank multiplication summing only $k = s\log_2 n < n$ outer products of columns of A and rows of B \cite{12,matrixmultiply2006Kannan} described in \cref{sec:low_rank_algorithm}; 2) the zeroth order multiplication using the largest $\lceil s \log_2 n \rceil$ singular value components of $A$ and $B$ using a randomized partial SVD; and 3) augmenting the above with the inclusion of first order terms in the multiplication; see \cref{fig:SVD_results1,fig:SVD_results2}.

We may observe that the randomized low rank multiplication summing $c= \lceil s \log_2 n \rceil$ outer products, implicitly represents up to $c$ largest singular value components in the range of $A$, where these column indices $k$ are chosen based on their 2-norm. It also includes up to $c$ singular value components of $B$, but not necessarily the largest of them, as its rows given by the identical set of indices $k$ have been chosen based on the 2-norm of the corresponding columns of $A$ (see \cref{sec:low_rank_algorithm} for more details). The converse is true if the set of indices $k$ are selected based on the 2-norm of the rows of $B$. Thus the zeroth order multiplication using the \textit{\textbf{top}} $s\log_2 n$ SVD components of \textit{\textbf{both}} $A$ and $B$ results in lower relative errors than this randomized outer products. But, as expected, the proposed first-order multiplication presents significantly lower errors without an increase in the order of scaling of the computing effort. Note that this includes the residue terms $A_i \cdot \Delta B$ and $\Delta A \cdot B_k$ as well, where $A_i$ and $B_k$ are the individual singular value components included in the truncated decomposition. We have also included a challenging example in matrix multiplication where the entries are oscillating in signs with exponentially varying magnitudes (see \cref{tab:svd1}). This sign-oscillatory ill-conditioned matrix `Kappa' (with condition number $\kappa \to \infty$) was synthesized using the relation $A_{ij}=A_{ji}=e^{(-0.5|i-j|)}\times sin(i+1)$ for the entries. In this case, the proposed first order approach achieves acceptable results using SVD, and better results in the CD based multiplication due to its periodicity in entries.

\begin{table}[H]
    \centering
    \begin{tabular}{|c|c|c|c|c|c|c|}
        \hline
        \multirow{2}{*}{Matrix} & \multicolumn{3}{c|}{$s$ for 5\% relative error} & \multicolumn{3}{c|}{$s$ for 1\% relative error}\\
        \cline{2-7}
        & Low-rank & SVD-Zeroth& SVD-First& Low-rank & SVD-Zeroth& SVD-First\\
         \hline
         Symmetric \& Toeplitz & 24 & 1& 1& -& 1& 1\\
         \hline
          Symmetric \& Symmetric& 16& 1& 1& -& 1& 1   \\
         \hline
         Symmetric \& Hankel &25 & 1& 1& - & 1& 1 \\
         \hline
         General \& Symmetric &25  & 1& 1 & - & 1 & 1 \\
         \hline
         DFT Model \& DFT Model &8  &1 &1 & -&1 & 1 \\
         \hline
         Images\footnotemark[1]& 9& 1&1 & - & 5&2  \\
         \hline
        Bus494 \& Bus494\footnotemark[2]& 50 & 4&2 &62  &10 &3  \\
         \hline
         Type-1 \& Type-1\footnotemark[4]& - &5 &3 &-  &7 & 4 \\
         \hline
         Type-1 \& Toeplitz& - & 73& 4&-  & -&6  \\
         \hline
        Toeplitz \& Toeplitz &34 & 1& 1& - & 31 & 9\\
         \hline
        Block Toeplitz \& Block Toeplitz & 34& 1& 1 & - & 30 & 9\\
         \hline
        Topelitz \& Hankel&  33& 1 & 1 & - & 31& 9  \\
         \hline
         General \& Toeplitz& 34& 1& 1 & - & 30 & 9 \\
         \hline
        Hankel \& Hankel& 34 &1 & 1& -& 31& 9\\
         \hline
        General \& Hankel& 33 & 1 & 1& -& 30& 9 \\
        \hline
        LLM-1($Q_1I_1,K_1I_1,V_1I_1$)\footnotemark[4]&-&171&8&-&183&9\\
         \hline
         LLM-2($Q_2I_2, K_2I_2, V_2I_2$)\footnotemark[4]&-&171&8&-&184&9\\
         \hline
          General \& General &33  & 1& 1& - & 29& 9\\
         \hline
         Bus662 \& Bus662\footnotemark[3]& 74 &18 &7 &74  &39 &15  \\
         \hline
         Type-2 \& Type-2\footnotemark[4]& - &27 &2 & - & 67& 16 \\
         \hline
         Kappa \& General& - & 67 &38 &- &76 & 59 \\
         \hline
        Kappa \& Kappa &-  &68 & 40& - & 76& 60 \\
         \hline
        Kappa \& Toeplitz &- & 69& 39 & - &- & 61 \\
         \hline
        
         Type-3 \& Toeplitz&-  &74 & 56&-  &- & 69 \\
         \hline
         Type-3 \& Type-3\footnotemark[4]& - &76 & 62& - & -&71  \\
         \hline
         
    \end{tabular}
    \caption{Results for different types of matrices of size $700 \times 700$ (see footnotes for exceptions). The average numbers $s$ reported indicate the corresponding front constants in the $\mathcal{O}(n^2 \log_2 n)$ scaling of arithmetic operations, using $\lceil s \log_2 n \rceil$ components of the Singular Value Decomposition (SVD). `-' indicates that it did not achieve the desired tolerance in relative error even after using $2n^3$ arithmetic operations required in the conventional multiplication. DFT - Density Functional Theory; LLM - Large Language Model. All structured matrices with some periodicity in entries are of low effective ranks.}
    \label{tab:svd1}
\end{table}
\footnotetext[1]{RGB images have been normalized and resized into 2D with size 700$\times$700}
\footnotetext[2]{494 BUS matrix was taken from MatrixMarket\cite{matrixmarket}}
\footnotetext[3]{662 BUS matrix was taken from Matrixmarket\cite{matrixmarket}}
\footnotetext[4]{\cref{sec:matrix_generation} contains details generating matrices with low-rank (Type-1), dominant low-rank components (Type-2), and linear decay of singular values (Type-3), respectively. It also contains the details for generating the Query, Key, Value, and Input matrices $Q_i, K_i, V_i, I_i$, respectively, each of size $2048 \times 2048$.}

The large improvements of the proposed first-order approach over the conventional low-rank multiplication and the zeroth order approximation is also emphasized using the examples of a Large Language Model (LLM) in \cref{tab:svd1}. Even if one of the matrices in the product has high effective rank, the first-order approach presents small relative errors as in evaluation of $QI$, $KI$ and $VI$, where only $I$ is of low rank. The matrix product $QK^T$ used in LLM-1 and LLM-2 required $125\log_2 n$ and $158\log_2 n$ components and provide only marginal gains in the proposed first-order approach to achieve relative errors of $5\%$ and $1\%$, respectively. This is due to the dominance of all singular values of such matrices, unlike the input matrix $I$.

\begin{figure}
    \centering
    \begin{minipage}[b]{0.48\textwidth}
        \centering
        \includegraphics[width=\textwidth]{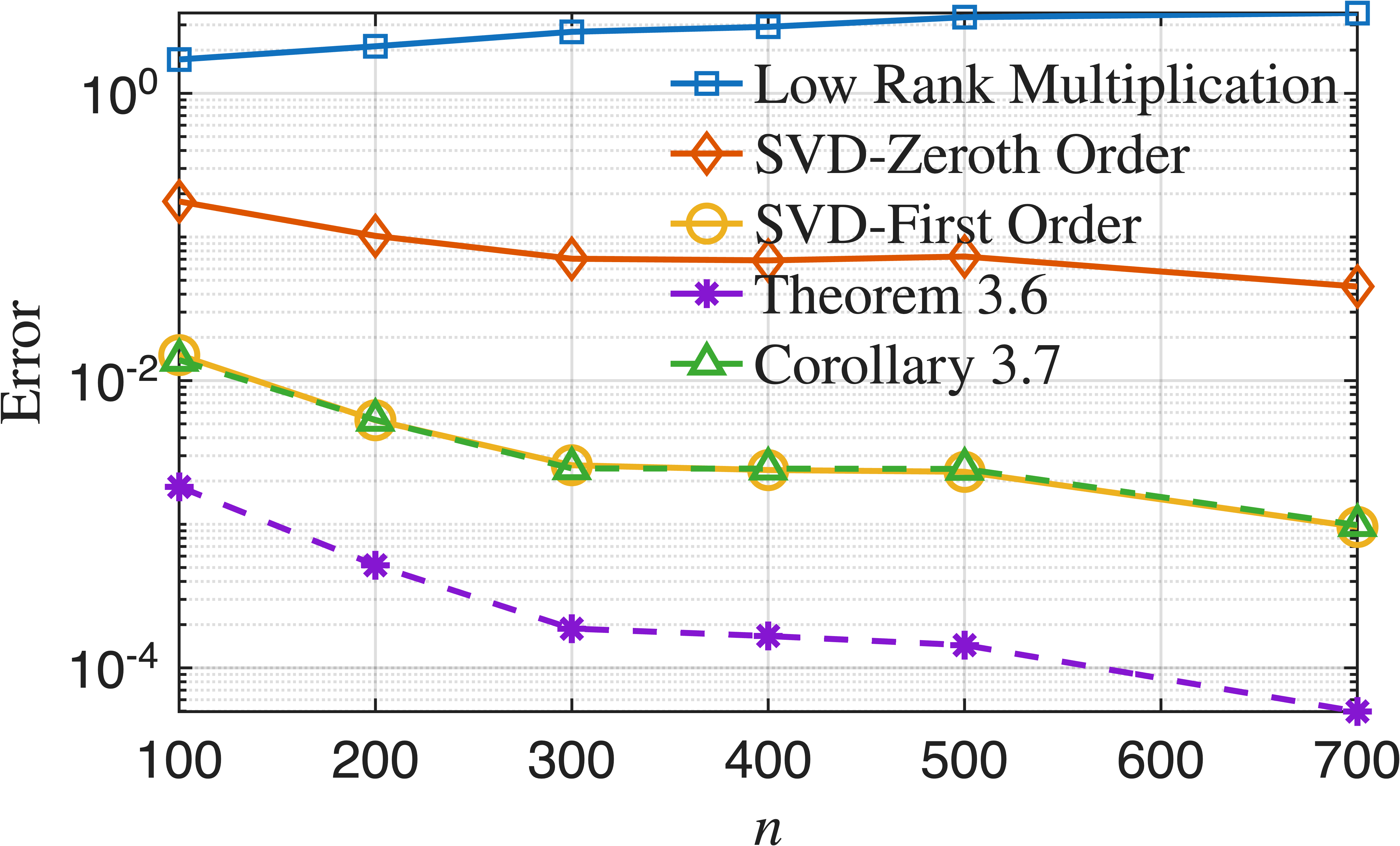}
        \caption{Size of matrix vs. mean relative error in multiplying two Type-1 matrices. $5\lceil \log_2 n \rceil$ components of the SVD were used. A priori estimates using Theorem 3.6 ignoring its $\mathcal{O}(\frac{1}{\sqrt{n}})$ second term, and posterior estimates using Corollary 3.7 are also plotted above. While the former is smaller than the actual errors, the latter provides estimates that overlap with the observed relative error.}\label{fig:SVD_results1}
    \end{minipage}
    \hfill
        \begin{minipage}[b]{0.48\textwidth}
        \centering
        \includegraphics[width=\textwidth]{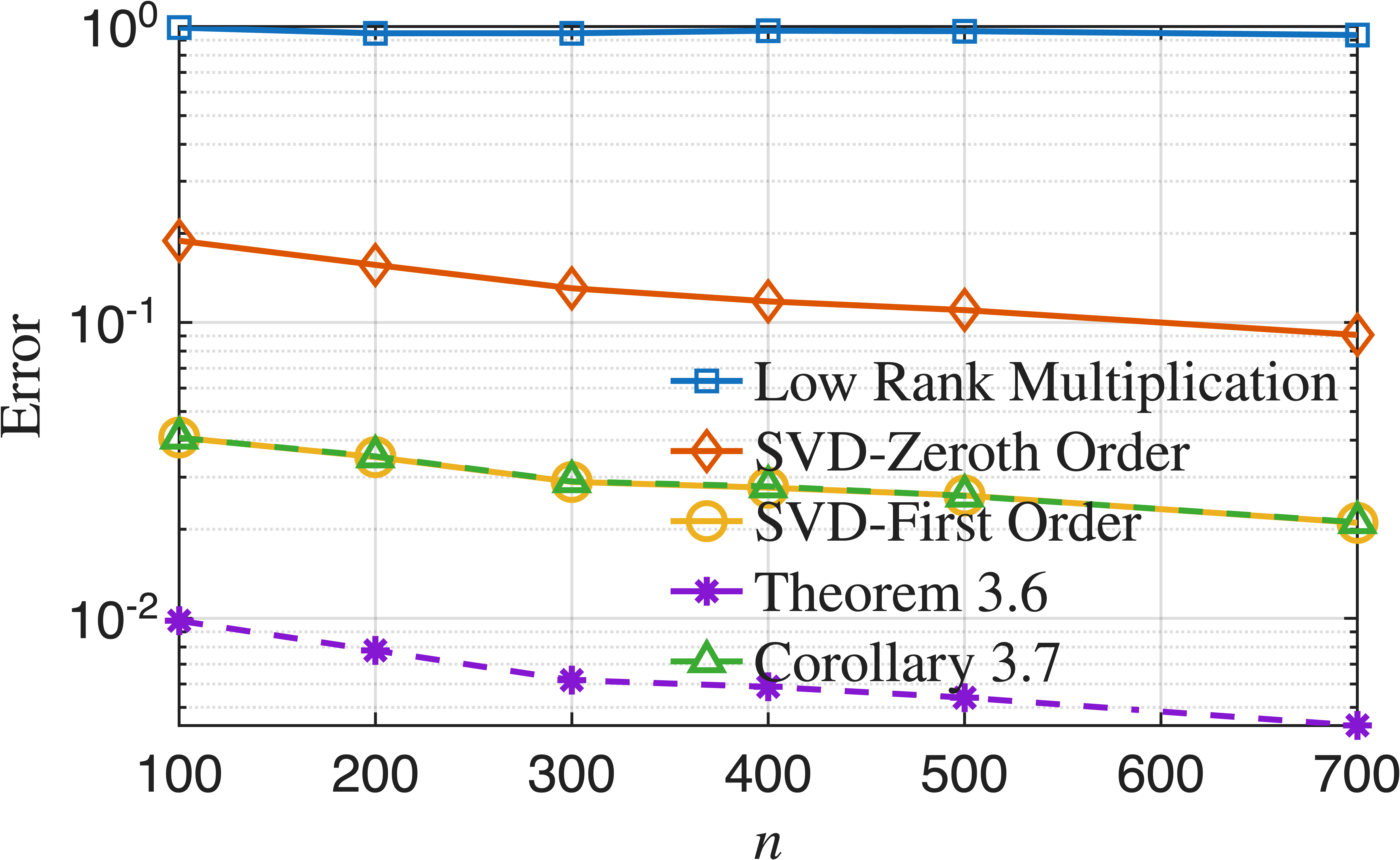}
        \caption{Size vs. mean relative error in multiplying two Type-2 matrices. $5\lceil \log_2 n \rceil$ components of the SVD were used. A priori estimates using Theorem 3.6 ignoring its $\mathcal{O}(\frac{1}{\sqrt{n}})$ second term, and posterior estimates using Corollary 3.7 are also plotted above. While the former is smaller than the actual errors, the latter provides estimates that overlap with the observed relative error.}\label{fig:SVD_results2}
    \end{minipage}
\end{figure}

\subsection{Truncated Circulant Decomposition}

The circulant decomposition is well-suited to provide a compressed approximation of matrices with some periodicity in their entries (see \cref{sec:plots_CD} for examples). Thus, they can present small relative errors even in an approximate multiplication using a first order approach used here; see \cref{fig:CD_results1,fig:CD_results2}. The circulant components are not an ideal basis to compactly approximate sparse matrices, and unstructured dense matrices $A$ with random entries and a mean value $\sim 0$, both of which are not of interest here. But, note that these matrices presented reasonable results in the SVD based multiplication, thus establishing the complementarity of these two decompositions. For example, the sparse matrices indicated by `Bus-664' and `Bus-494', and the multiplication of two general matrices with random entries require significantly more circulant components to converge to the required tolerances in the relative error; see \cref{tab:cd1}. Similarly, while the SVD based multiplication may produce reasonable results with periodic matrices of low rank as in \cref{tab:svd1}, they fail for full rank matrices with the same periodicity in entries. Whereas the CD based multiplication produces useful results agnostic of the rank when the matrices have periodicity in entries.

\begin{figure}
    \centering
    \begin{minipage}[b]{0.48\textwidth}
        \centering
        \includegraphics[width=\textwidth]{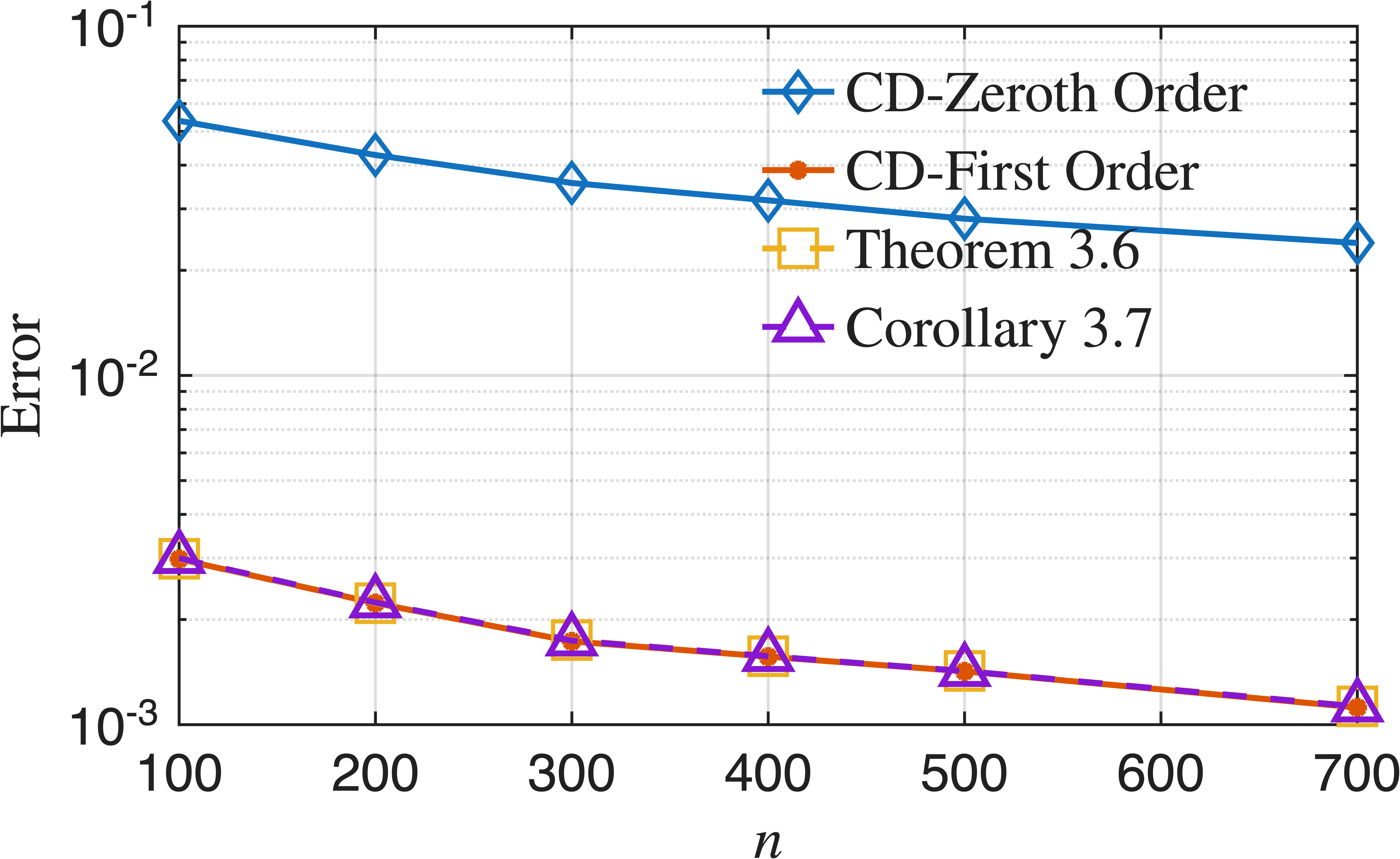}  
        \caption{Size of matrix vs. mean relative error in multiplying a general and a Toeplitz matrix with randomly generated entries from $\mathcal{U}$(0,1). $5\lceil \log_2 n \rceil$ components of the CD were used. A priori estimates using Theorem 3.6 ignoring its $\mathcal{O}(\frac{1}{\sqrt{n}})$ second term, and posterior estimates using Corollary 3.7 are also plotted above. Both provide estimates that overlap with the observed relative error.}\label{fig:CD_results1}
    \end{minipage}
    \hfill
    \begin{minipage}[b]{0.48\textwidth}
        \centering
        \includegraphics[width=\textwidth]{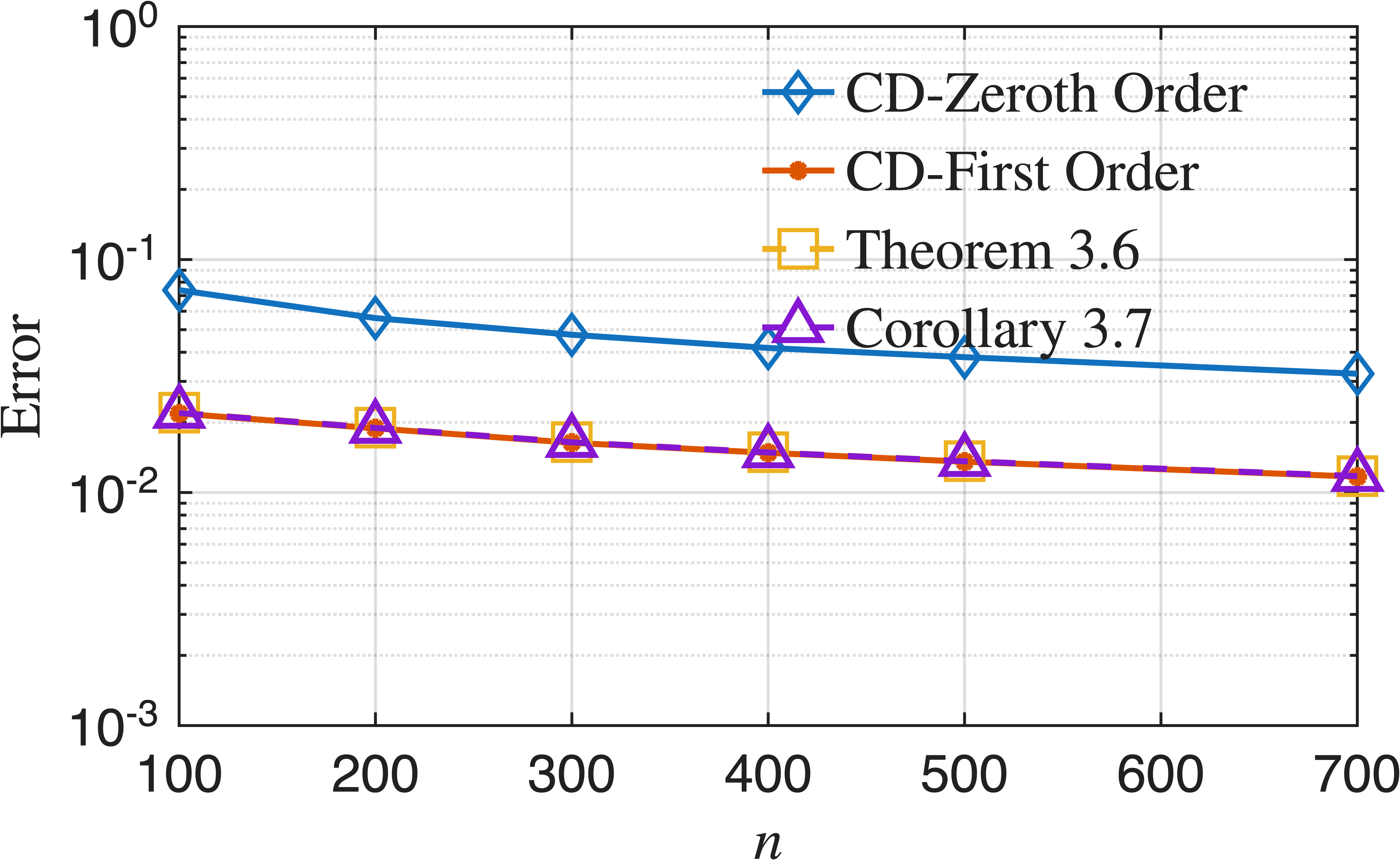}  
        \caption{Size of matrix vs. mean relative error in multiplying two random matrices with entries from a $\mathcal{U}$(0,1). $5\lceil \log_2 n \rceil$ components of the CD were used. A priori estimates using Theorem 3.6 ignoring its $\mathcal{O}(\frac{1}{\sqrt{n}})$ second term, and posterior estimates using Corollary 3.7 are also plotted above. Both provide estimates that overlap with the observed relative error.}\label{fig:CD_results2}
    \end{minipage}
\end{figure}

\begin{table}[H]
    \centering
    \begin{tabular}{|c|c|c|c|c|}
        \hline
        \multirow{2}{*}{Matrix} & \multicolumn{2}{c|}{$s$ for 5\% relative error} & \multicolumn{2}{c|}{$s$ for 1\% relative error}\\
        \cline{2-5}
        & CD-Zeroth & CD-First & CD-Zeroth &CD-First\\
         \hline
        Toeplitz \& Toeplitz & 1  & 1& 1& 1 \\
         \hline
         Symmetric \& Toeplitz & 1 & 1& 53& 1\\
         \hline
         Topelitz \& Hankel& 1 & 1 &37 & 1  \\
         \hline
        General \& Toeplitz & 1 & 1 & 66& 1 \\
         \hline
         Symmetric \& Symmetric& 1 & 1& 63& 1  \\
         \hline
         Symmetric \& Hankel&  1& 1& 59 & 1\\
         \hline
         General \& Symmetric& 1 & 1& 68 & 1\\
         \hline
          DFT Model \& DFT Model & 1 &1 & 47& 1  \\
         \hline
        Block Toeplitz \& Block Toeplitz &1 &1 & 8& 2 \\
         \hline
        Images\footnotemark[1] & 3 & 1&20 &2  \\
         \hline
         Kappa \& General& 76 & 4& -& 30 \\
         \hline
        Hankel \& Hankel& 1 & 1& 54 & 5 \\
         \hline
         General \& Hankel& 1 & 1 & 67& 8   \\
         \hline
        Kappa \& Toeplitz& 22 & 1& 54& 10  \\
         \hline
         General \& General& 1 & 1& 71& 16  \\
         \hline
         Kappa \& Kappa& 35 &11 & 66& 34  \\
         \hline
          Type-3 \& Toeplitz&-  &5 &- &  41 \\
         \hline
         Type-1 \& Toeplitz& - &5 &- & 42 \\   \hline
         Bus494 \& Bus494\footnotemark[2] &58  & 40& 62& 54  \\
         \hline
         Bus662 \& Bus662\footnotemark[3] &70  &49 & 73& 64  \\
         \hline
         Type-2 \& Type-2\footnotemark[4]& 76& 59 & -& 74 \\
         \hline
        Type-1 \& Type-1\footnotemark[4]& - &74 & -& 77 \\         
         \hline
         Type-3 \& Type-3\footnotemark[4]&-  & 74&- & 77   \\
        \hline
        LLM-1($Q_1I_1,K_1I_1,V_1I_1,Q_1K_1^T$)\footnotemark[4]&186&173&-&184\\
         \hline
         LLM-2($Q_2I_2, K_2I_2, V_2I_2,Q_2K_2^T$)\footnotemark[4]&186&173&-&184\\
         \hline
    \end{tabular}
    \caption{Results for different types of matrices of size $700 \times 700$ (see footnotes for exceptions). The average numbers $s$ reported indicate the corresponding front constants in the $\mathcal{O}(n^2 \log_2 n)$ scaling of arithmetic operations, using $\lceil s \log_2 n \rceil$ components of the Circulant Decomposition (CD). `-' indicates that it did not achieve the desired tolerance in relative error even after using $2n^3$ arithmetic operations required in the conventional multiplication. DFT - Density Functional Theory. Note that the circulant decomposition based method is agnostic to the rank of the structured matrices with periodicity in entries.}
    \label{tab:cd1}
\end{table}

\subsection{Sparsified multiplications}
In this section, we present results for multiplication of matrices that can be decomposed into a sparse matrix with fewer entries having large magnitudes, and a dense residue matrix with the smaller entries. The Fourier series based sparsification in the frequency domain can be performed in $\mathcal{O}(n^2 \log n)$ arithmetic operations, before a multiplication of the two sparse matrices and their residues. Reducing the given matrix to a sparse matrix with a few cycles is more appropriate for matrices that have dominant entries in some cycles of a matrix and not others. This approach for multiplying such matrices using the cycle decomposition are not demonstrated here for ensuring brevity. 

\begin{figure}
    \centering
    \begin{minipage}[b]{0.48\textwidth}
        \centering
        \includegraphics[width=\textwidth]{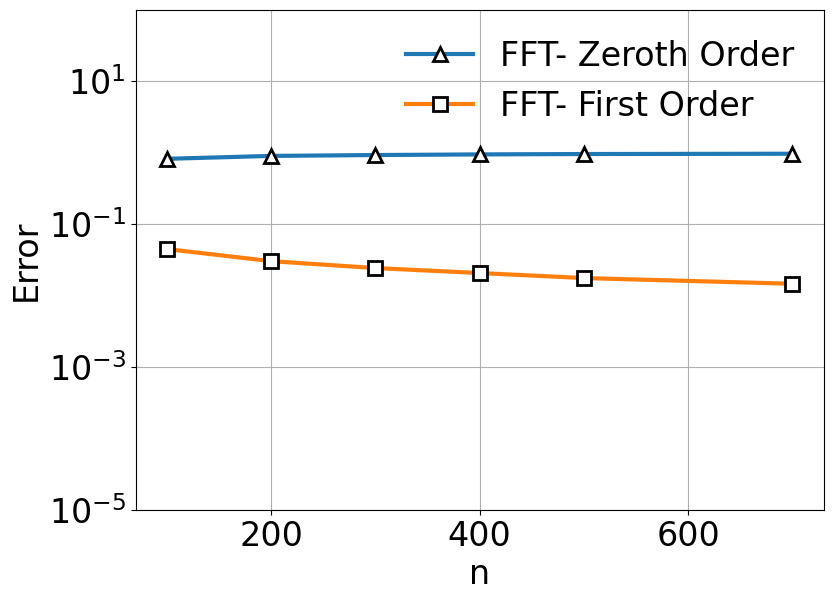}  
        \caption{Size of matrix vs. mean relative error in multiplying a general and a Toeplitz matrix with randomly generated entries from a $\mathcal{U}$(0,1) distribution. $5\lceil \log_2 n \rceil$ entries for every row/column were used in the FFT-based sparsification.} \label{fig:FFT_results1}
    \end{minipage}
    \hfill
    \begin{minipage}[b]{0.48\textwidth}
        \centering
        \includegraphics[width=\textwidth]{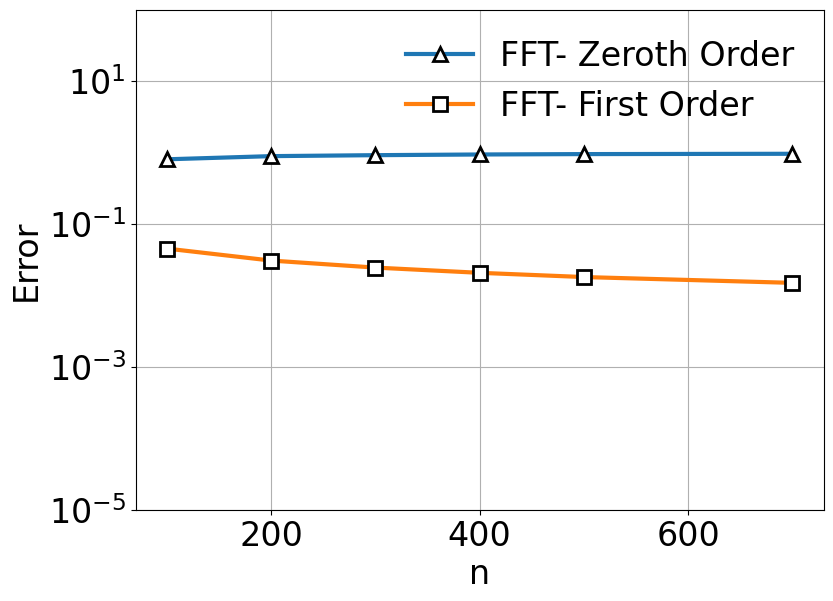}  
        \caption{Size of matrix vs. mean relative error in multiplying two matrices with randomly generated entries from a $\mathcal{U}$(0,1) distribution. $5\lceil \log_2 n \rceil$ entries for every row/column were used in the FFT-based sparsification.} \label{fig:FFT_results2}
    \end{minipage}
\end{figure}

The more generally applicable sparsification for dense matrices based on FFT on the rows of $A$ and the columns of $B$, and a multiplication of the largest entries in the transformed matrices as a first order approach, was described earlier in \cref{sec:Fourier_sparsification}. Here we note that its performance is comparable to, but not as efficient as the SVD and CD based multiplications; see \cref{tab:FFT1}, \cref{fig:FFT_results1,fig:FFT_results2}. Also, while the SVD based approach requires only efficient scaling of matrix-vector products, and the CD based approach requires optimal scaling of FFT operations, this sparsified FFT method requires further leveraging of sparse data structures in addition. Nevertheless, its proposed first-order approach may outperform the other two methods in special cases where a Fourier series based compression of the rows and columns is more effective.

\begin{table}[]
    \centering
    \begin{tabular}{|c|c|c|c|c|}
        \hline
        \multirow{2}{*}{Matrix} & \multicolumn{2}{c|}{$s$ for 5\% relative error} & \multicolumn{2}{c|}{$s$ for 1\% relative error}\\
        \cline{2-5}
        &S-FFT-Zeroth& S-FFT-First &S-FFT-Zeroth& S-FFT-First\\
         \hline
	Image \& Image & - & 1 & - &1\\
    \hline
         Symmetric \& Toeplitz & 74 & 1& -& 1\\
         \hline
         Symmetric \& Symmetric& 74 & 1& -& 1  \\
         \hline
         Symmetric \& Hankel&  74& 1& - & 1\\
         \hline
         General \& Symmetric& 74 & 1& 75 & 1\\
         \hline
         Images\footnotemark[1] & - & 1&- &1  \\
         \hline
         Bus494 \& Bus494\footnotemark[2] &42  & 2& 48& 3  \\
         \hline
         Toeplitz \& Toeplitz & 74  & 1& -& 5 \\
         \hline
         Block Toeplitz \& Block Toeplitz &74 &1 & -& 5 \\
         \hline
         Topelitz \& Hankel& 74 & 1 &- & 5  \\
         \hline
         General \& General& 74 & 1& 75 & 5  \\
         \hline
        General \& Toeplitz & 74 & 1 & 75 &6  \\
         \hline
        Hankel \& Hankel& 74 & 1& - & 6 \\
         \hline
         General \& Hankel& 74 & 1 & 75& 6   \\
         \hline
         Bus662 \& Bus662\footnotemark[3] &52  &6 & 67& 20  \\
         \hline
         Kappa \& Kappa& 73 &35 & -& 50  \\
         \hline
          Type-2 \& Type-2\footnotemark[4]& 74& 29 & -& 52 \\
         \hline
         Type-1 \& Type-1\footnotemark[4]& 71 &52 & 74& 63 \\        
         \hline
         Type-3 \& Type-3\footnotemark[4]&71  & 52&74 & 64   \\
         \hline
         Kappa \& General& 74 & 42& 75& 66 \\
         \hline
          Kappa \& Toeplitz& 74 & 43& -& 67  \\
         \hline
          DFT Model \& DFT Model & - &47 & -& 67  \\
         \hline
          Type-3 \& Toeplitz&74  &62 &- &  71 \\
         \hline
         Type-1 \& Toeplitz& 74 &63 &- & 71 \\  
	\hline
	Type-1 \& Toeplitz & 74 & 63 & - & 71 \\
	\hline
        LLM-1($Q_1I_1,K_1I_1,V_1I_1,Q_1K_1^T$)\footnotemark[4]&182&136&186&166\\
         \hline
         LLM-2($Q_2I_2, K_2I_2, V_2I_2,Q_2K_2^T$)\footnotemark[4]&182&136&186&166\\
         \hline
    \end{tabular}
    \caption{Results of a first-order multiplication based on the sparsified Fast-Fourier-Transformed matrices (S-FFT) of size $700 \times 700$ (see footnotes for exceptions). The average number of Fast Fourier Transform (FFT) components included in each row/column of matrices $A$ and $B$ is $\lceil s \log_2 n \rceil$, with $s$ indicating the corresponding front constant in the $\mathcal{O}(n^2 \log_2 n)$ scaling of arithmetic operations. `-' indicates that it did not achieve the desired tolerance in relative error, even after using $2n^3$ arithmetic operations required in the conventional multiplication. DFT - Density Functional Theory.}
    \label{tab:FFT1}
\end{table}

\subsection{Run-time experiments}
It should be noted that while we have so far detailed the arithmetic complexity involved in this approach of approximating the multiplication of matrices, other system-dependent considerations may also play a role in selecting a particular decomposition for the approximation. 
Systems can be optimized for a particular approach to effectively translate its gains in arithmetic efficiency into speed, and these may involve vectorized data and instructions in communications, and multi-core processing among others. The optimality of matrix-vector products may suffice for the partial-SVD based method, and only a fraction of it involving a reduced QR factorization is not friendly to a naive parallel computation. The circulant decomposition based method may also require optimal scaling of FFT operations in the system used. Further, the Fourier decomposition and sparsification based approach may also need the use of sparse data structures. First we present run-time results demonstrating the gains in speed over the randomized outer product algorithm known for approximating matrix multiplication, in a commonly used system. This is aimed at highlighting the potential practical gains in applications where tolerances in relative error $\sim$ 1\% are admissible.

Here we are concerned with the product of $n \times n$ matrices as examples: two Type-1 matrices, and the product of a random symmetric matrix and a Toeplitz matrix deficient in rank, as used in the examples of SVD based multiplication (\cref{tab:svd1}) of the previous subsection of the manuscript. The corresponding run-times of the randomized outer-product are compared below with the first-order SVD method in \cref{fig:randomized_vs_svd}, for a relative error tolerance of 1\%. The relative errors in both cases were evaluated using the results of an exact multiplication. But the approximation code based on the SVD decomposition also included the computing effort of an error estimation as proposed in the work, along with the effort of the partial decomposition and approximate multiplication. The proposed method can be orders of magnitude faster than the randomized outer products as the dimensions of the matrix increases.

\begin{figure}[h!]
    \centering    
    \begin{subfigure}{0.48\textwidth}
        \centering
        \includegraphics[width=\linewidth]{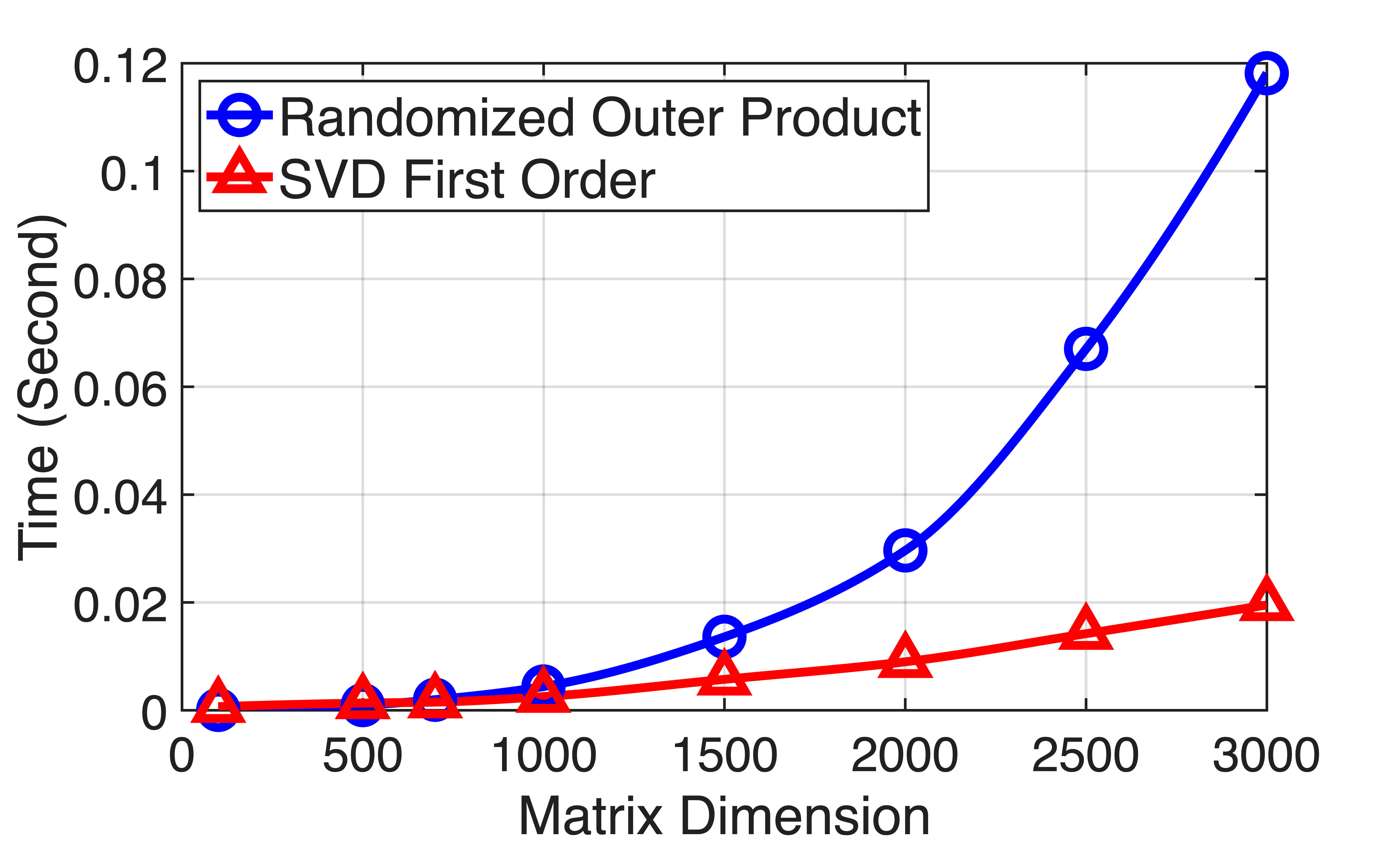}
        \caption{ Symmetric $\times$ Toeplitz   }
    \end{subfigure}
    \hfill
    \begin{subfigure}{0.48\textwidth}
        \centering
        \includegraphics[width=\linewidth]{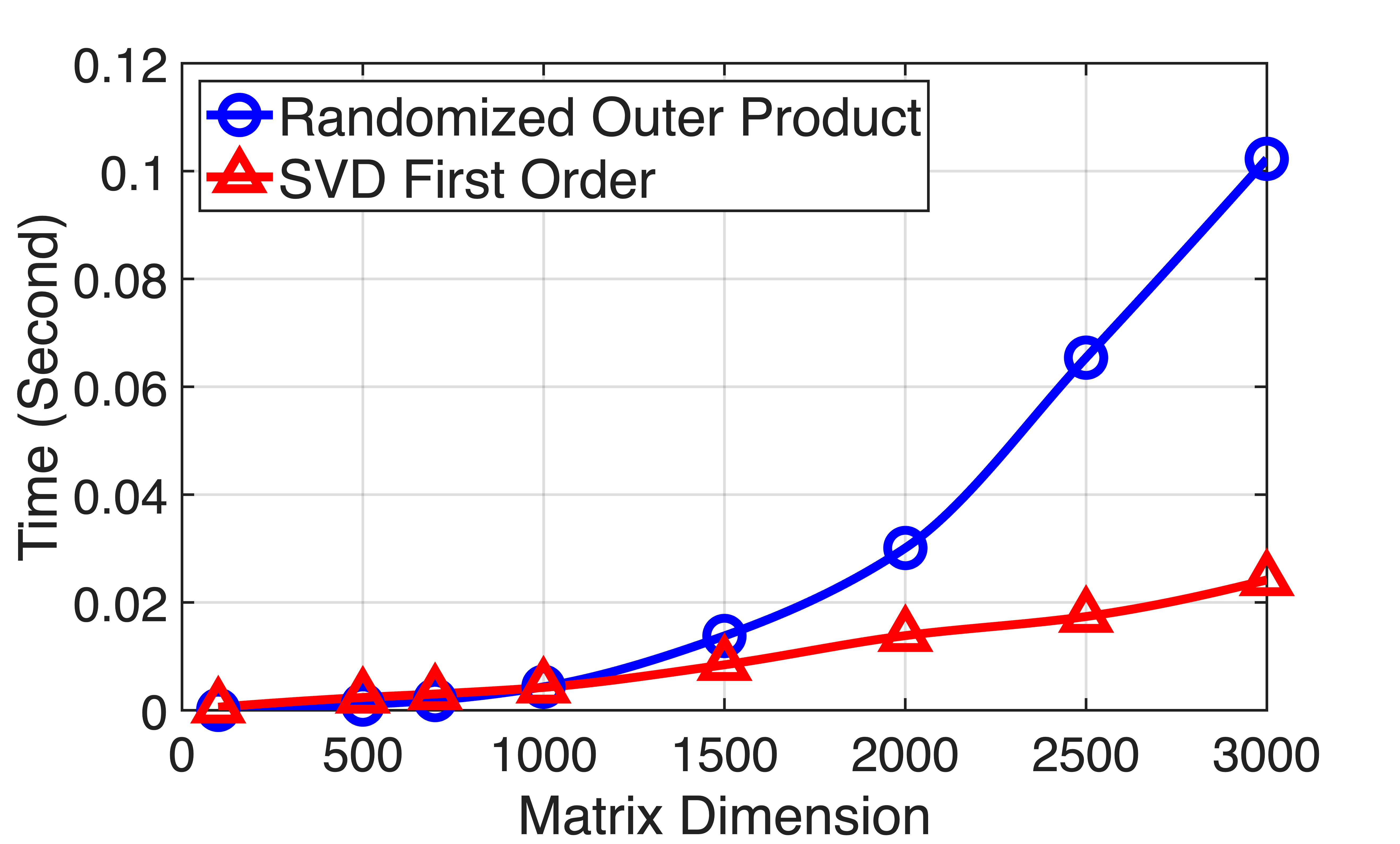}
        \caption{ Type-1 $\times$ Type-1 }
    \end{subfigure}
    \caption{Run-time comparison for Randomized Outer Product vs SVD First-Order with 1\% tolerance. Each data point represents an average over 25 trials.}
\label{fig:randomized_vs_svd}
\end{figure}

\begin{figure}[h!]
    \centering   
    \begin{subfigure}{0.48\textwidth}
        \centering
        \includegraphics[width=\linewidth]{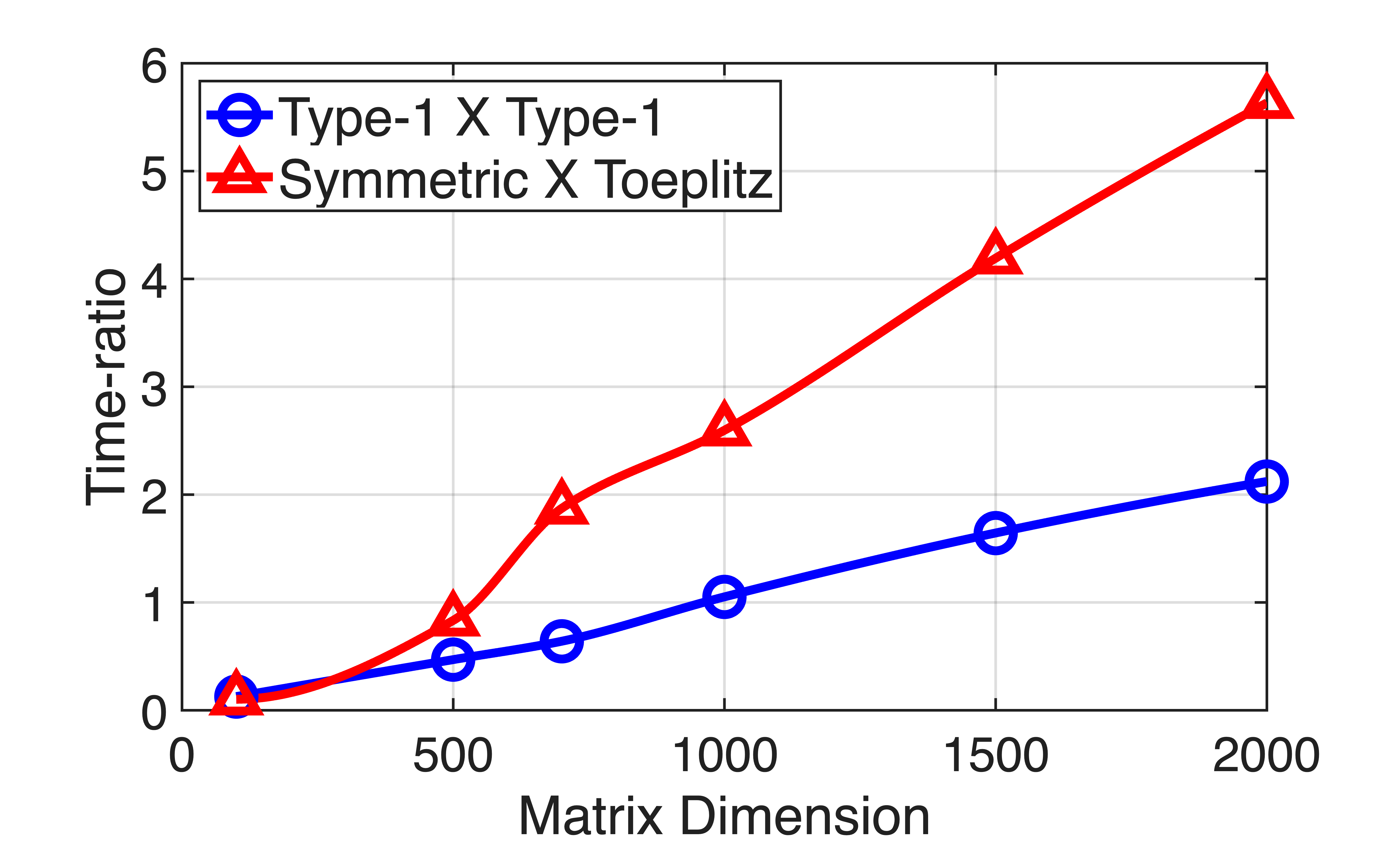}
        \caption{Randomized Outer Product over SVD First-Order}
    \end{subfigure}
    \hfill
    \begin{subfigure}{0.48\textwidth}
        \centering
        \includegraphics[width=\linewidth]{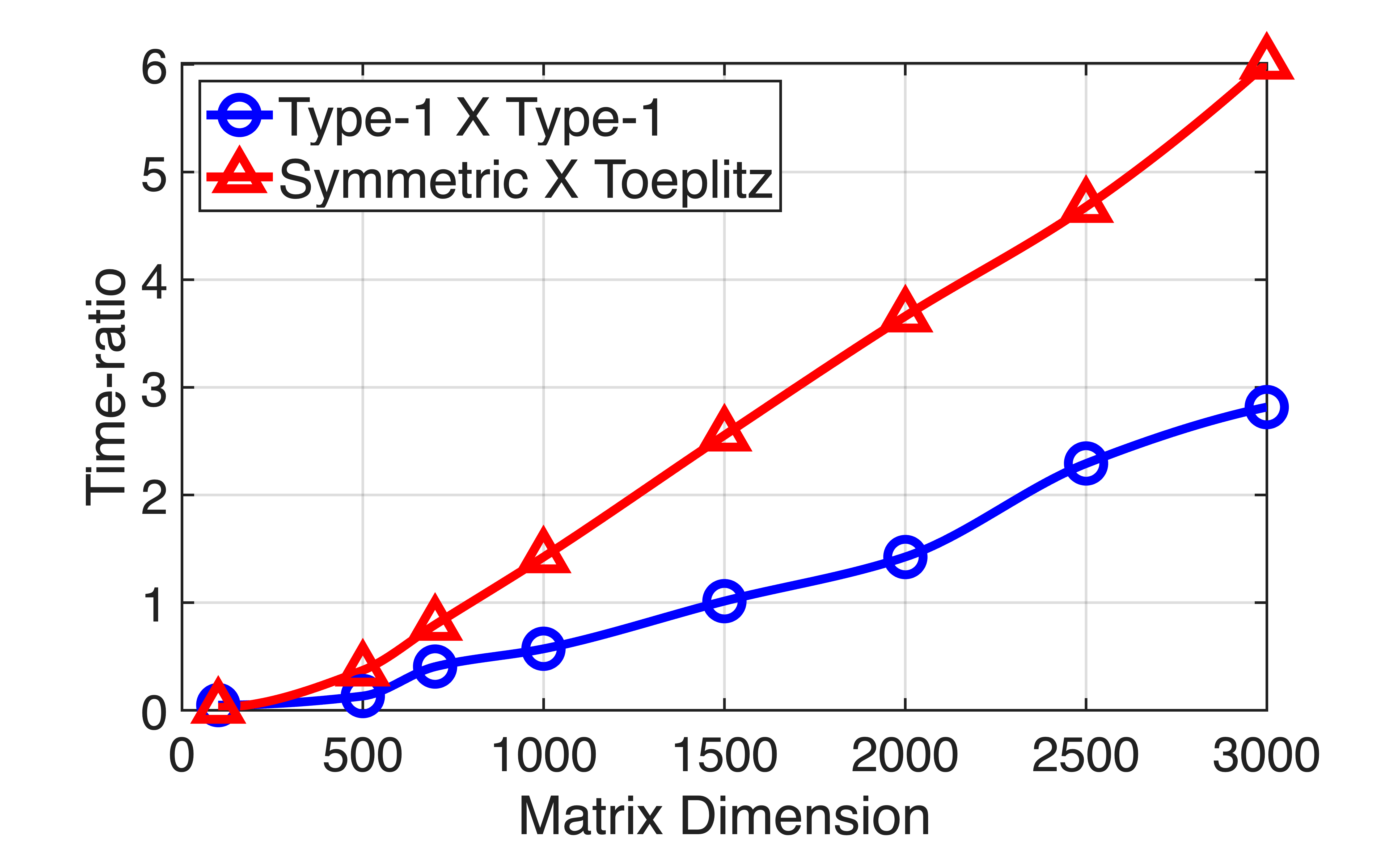}
        \caption{Naive Exact Product over SVD First-Order}
    \end{subfigure}
    \caption{Runtime-ratios of known methods over the proposed approximation with 1\% tolerance in relative error. A ratio $> 1$ implies gains due to the proposed approximation. Each data point represents an average over 25 trials.}
    \label{fig:time_ratio}
\end{figure}

Further, we are interested in the (cross-over) dimensions of matrices beyond which the proposed approximations can outperform the naive exact multiplication in arithmetic efficiency for a given tolerance in error. This can be estimated using the number of operations described earlier as $\sim n^2 s\log_2 n$ with the appropriate values of $s$ and the small front constants for the leading terms (8 for SVD and 4 for CD) and the secondary, tertiary terms in the scaling with the dimension of the matrices of the presented algorithms 2.1-2.3. The corresponding average values of $s$ are 1 and 5 respectively for the two types of matrices. It implies cross-over dimensions $\lesssim$ 500 for the above examples beyond which the proposed approximations require lower number of arithmetic operations for the given tolerance in relative error. Even using the programs in high-level languages provided by the authors \cite{Codes}, and even in settings of multi-core processors, the proposed approximations are expected to catch up and outperform the naive exact multiplication in practice as the dimensions of matrices increase. This is demonstrated\footnotemark[5] \footnotetext[5]{All experiments were performed on a MacBook Pro equipped with an Apple M4 Pro SoC featuring a 14-core CPU and 20-core GPU, 48 GB unified memory, 512 GB SSD, running macOS Tahoe 26.1 and MATLAB R2025b. All reported run-time results are averaged over 25 independent trials for each matrix dimension. Run-times were measured using MATLAB's tic/toc functions for wall-times with microsecond resolution.} for the random symmetric $\times$ Toeplitz, and the Type-1 $\times$ Type-1 multiplications in \cref{fig:time_ratio}b, using the run-time ratios of the naive exact multiplication and the SVD based approximation codes. Ratios greater than 1 imply run-time gains due to the proposed approximations, and the corresponding cross-over dimensions observed are $\sim$ 700 and $\sim$ 1500.

\subsection{Example end-to-end application: Large Language Model (LLM) prompt prefilling}
\label{subsec:llm_application}

To demonstrate the practical viability and wall-clock acceleration due to the proposed first-order approximation using truncated decompositions in a real-world matrix-heavy deployment, we apply it to the inference phase of Large Language Models (LLMs). The longer the text and its complexity (given by no. of tokens used), and/or the size of model used, the larger are the sizes of matrices and the possible gains due to the proposed approximations. Even in our demonstrations where the above sizes are limited by the computational power used by us, large gains in efficiency and speed manifest, along with the scaling up of gains with the sizes.

\subsubsection{Task description and formulation}
Modern LLMs generate text by first processing the entirety of a user's input prompt to build a contextual representation, a phase known as ``prefilling.'' When an LLM processes a large prompt of long text, the input sequence length $m$ grows significantly. The primary computational bottleneck during this phase relies on dense matrix-matrix multiplications between the dynamic input activation matrix $A \in \R^{m \times d_{in}}$ and the static layer weights $B \in \R^{d_{in} \times d_{out}}$ (specifically within the Query, Key, Value projections and the Feed-Forward Networks). For exact multiplication, this scales as $\mathcal{O}(m \cdot d_{in} \cdot d_{out})$, representing an $\mathcal{O}(n^3)$ bottleneck as dimension $n$ scales. Given $A$ and $B$, the proposed first-order approximation $M \approx \sum_{i} A_i B + \sum_{k} \Delta A B_k$ is executed as:
$$M \approx (U_A S_A) (V_A^T B) + (\Delta A U_B) (S_B V_B^T)$$
In this deployment setting, the truncated Singular Value Decomposition (SVD) of the static weights ($U_B, S_B, V_B^T$) is precomputed offline using the randomized partial SVD algorithm, as it need not be re-computed for each inference\cite{Codes}. 

Benchmarking linear algebraic approximations on modern hardware presents a unique challenge in differentiating the gains in efficiency and the gains in speed: modern processors (including GPUs and Apple Silicon SoCs) feature highly specialized silicon (e.g., Tensor Cores or AMX coprocessors) and highly parallelized Level-3 BLAS libraries that are explicitly hardwired to accelerate the naive exact $\mathcal{O}(n^3)$ dense matrix multiplications. All LLM inference benchmarks were executed using Python 3.11.15 to leverage the specialized adaptive interpreter, ensuring that Python-level execution overhead did not artificially mask the mathematical reduction in FLOPs. The environment utilized PyTorch 2.10 and Transformers 5.3.0, compiled natively for the ARM64 architecture to interface optimally with the hardware's native linear algebra backend. To guarantee exact reproducibility and hardware-agnostic algorithmic fairness, all multi-threading capabilities were disabled (\texttt{torch.set\_num\_threads(1)}), forcing a strictly sequential execution path for both the exact baseline and the proposed approximation. By stripping away multi-core hardware optimizations, the measured wall-clock execution time serves as a direct proxy for the total arithmetic complexity (FLOPs) and the efficiency of the computation. In later experiments, multithreading with 14 CPU cores were enabled to demonstrate the gains in speed that are relatively unaffected by the parallel computing architecture. Experiments were executed on an Apple M4 Pro SoC with 48GB unified memory.

\subsubsection{Results and Observations}
The proposed approximations were evaluated across two model scales: OPT-125M (inner dimensions $d \in \{768, 3072\}$) and OPT-1.3B (inner dimensions $d \in \{2048, 8192\}$). The models were tasked with a prefill workload resulting in an activation matrix row dimension of $m \in \{28815, 44183\}$. The exact multiplication in the linear projection layers, was replaced with the first-order approximation using truncated decompositions, parametrized by the front factor $s$ in the $s\log_2 n$ components used. We measured the end-to-end forward pass latency (ms) for the gains, and the model perplexity as a standard metric for predictive accuracy where a lower value indicates higher confidence and stability. To ensure unbiased latency measurements, all timed evaluations were preceded by a full forward pass to account for memory allocation and CPU cache-warming overheads.

    \begin{figure}[htbp]
     \centering
     \begin{subfigure}[b]{0.48\textwidth}
         \centering
         \includegraphics[width=\textwidth]{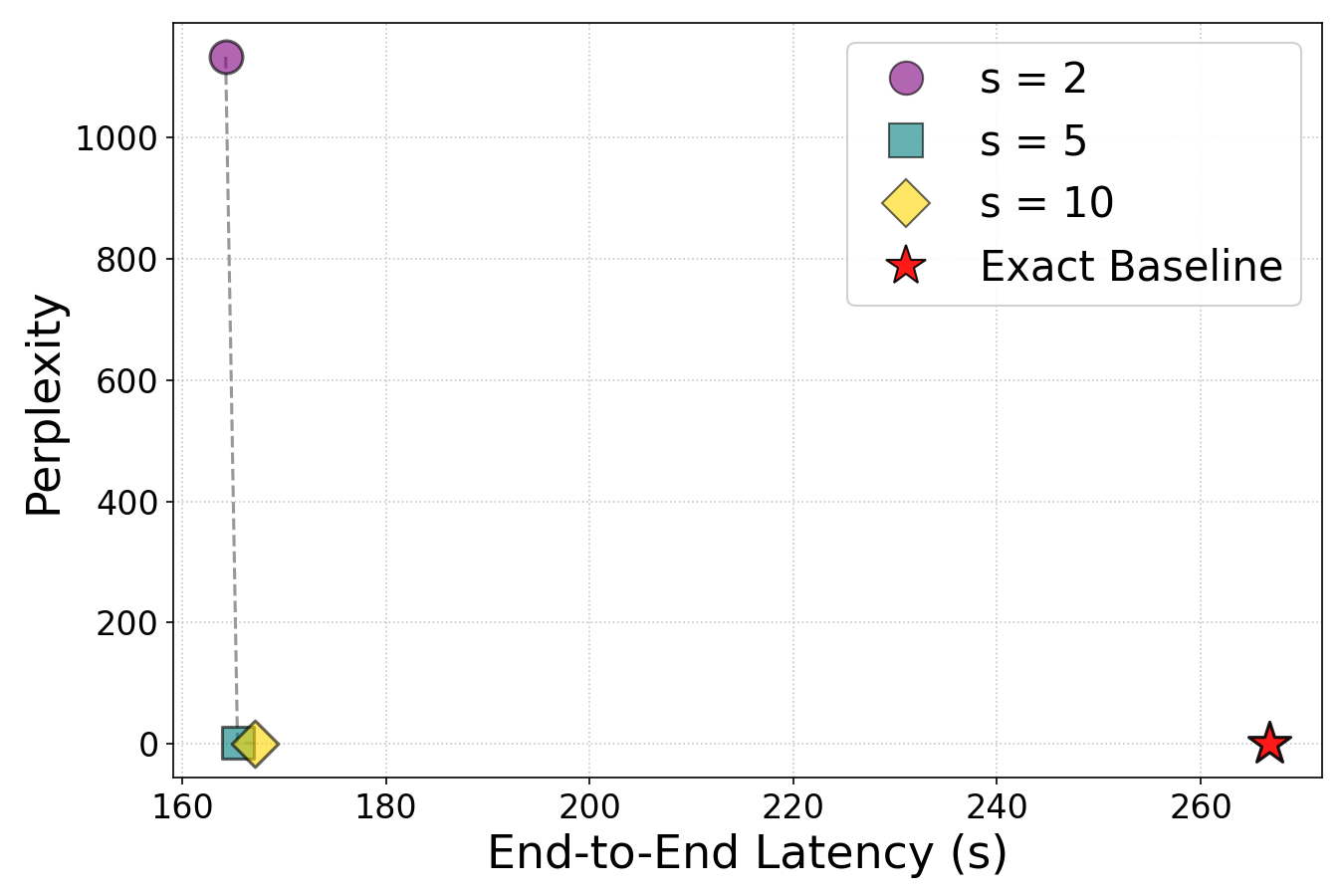}
         \caption{Results for OPT-125M}
     \end{subfigure}
     \hfill 
     \begin{subfigure}[b]{0.48\textwidth}
         \centering
         \includegraphics[width=\textwidth]{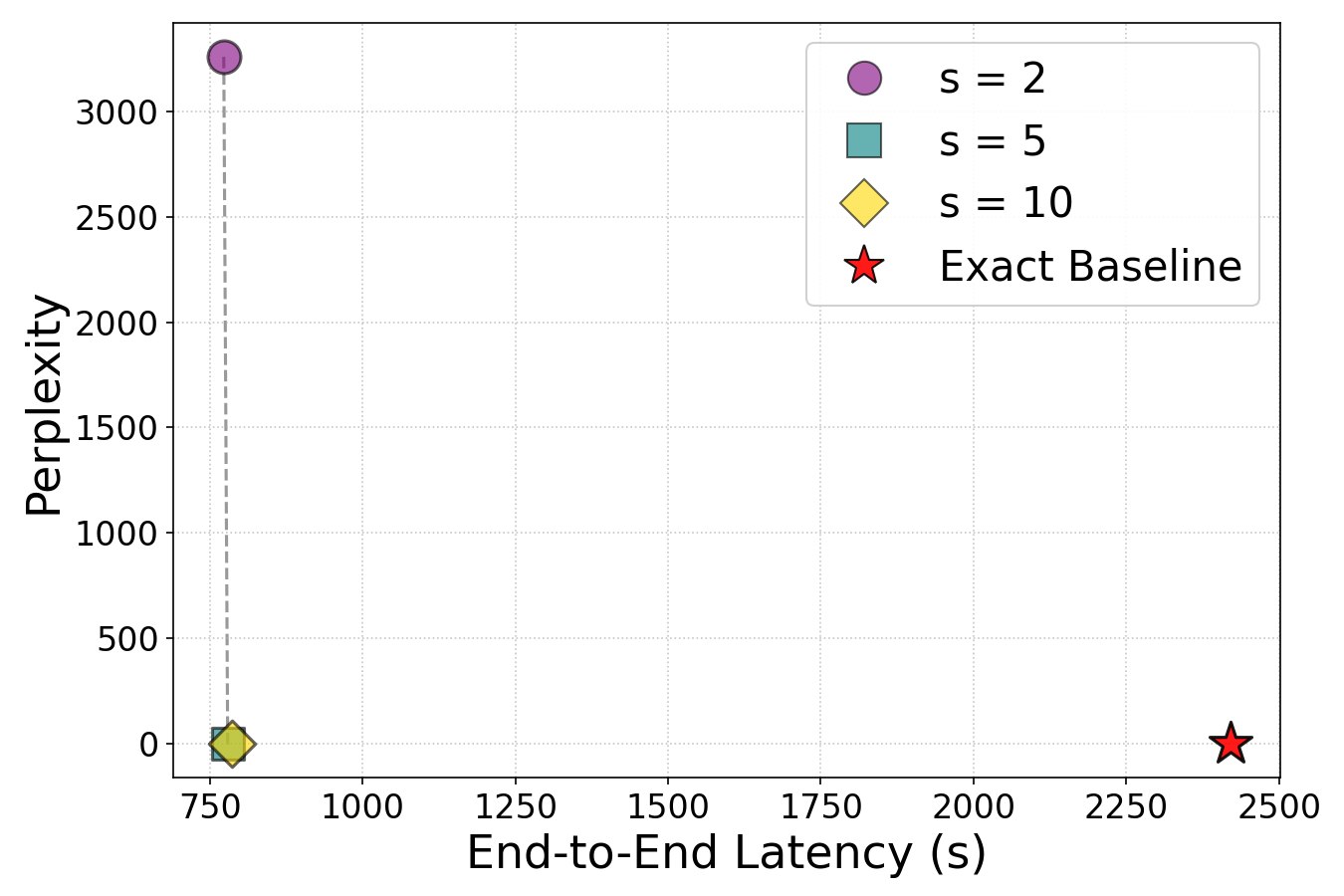}
         \caption{Results for OPT-1.3B}
   \end{subfigure}
     
     \caption{Comparison of the first-order approximations and the baseline across different model scales.}
     \label{fig:llm_test}
\end{figure}

\begin{figure}[htbp]
     \centering
     \begin{subfigure}[b]{0.48\textwidth}
         \centering
         \includegraphics[width=\textwidth]{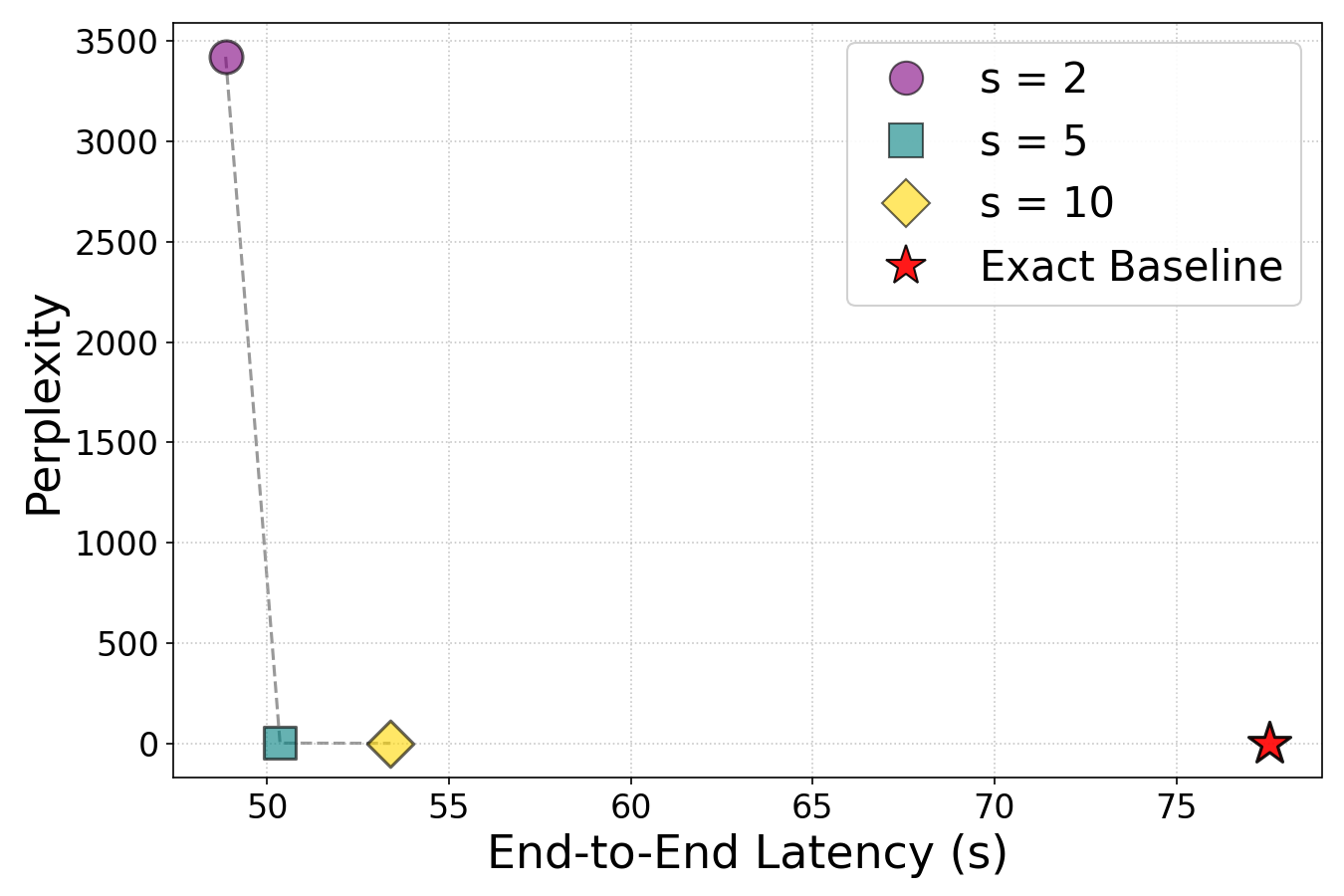}
         \caption{Results for OPT-125M}
     \end{subfigure}
     \hfill 
     \begin{subfigure}[b]{0.48\textwidth}
         \centering
         \includegraphics[width=\textwidth]{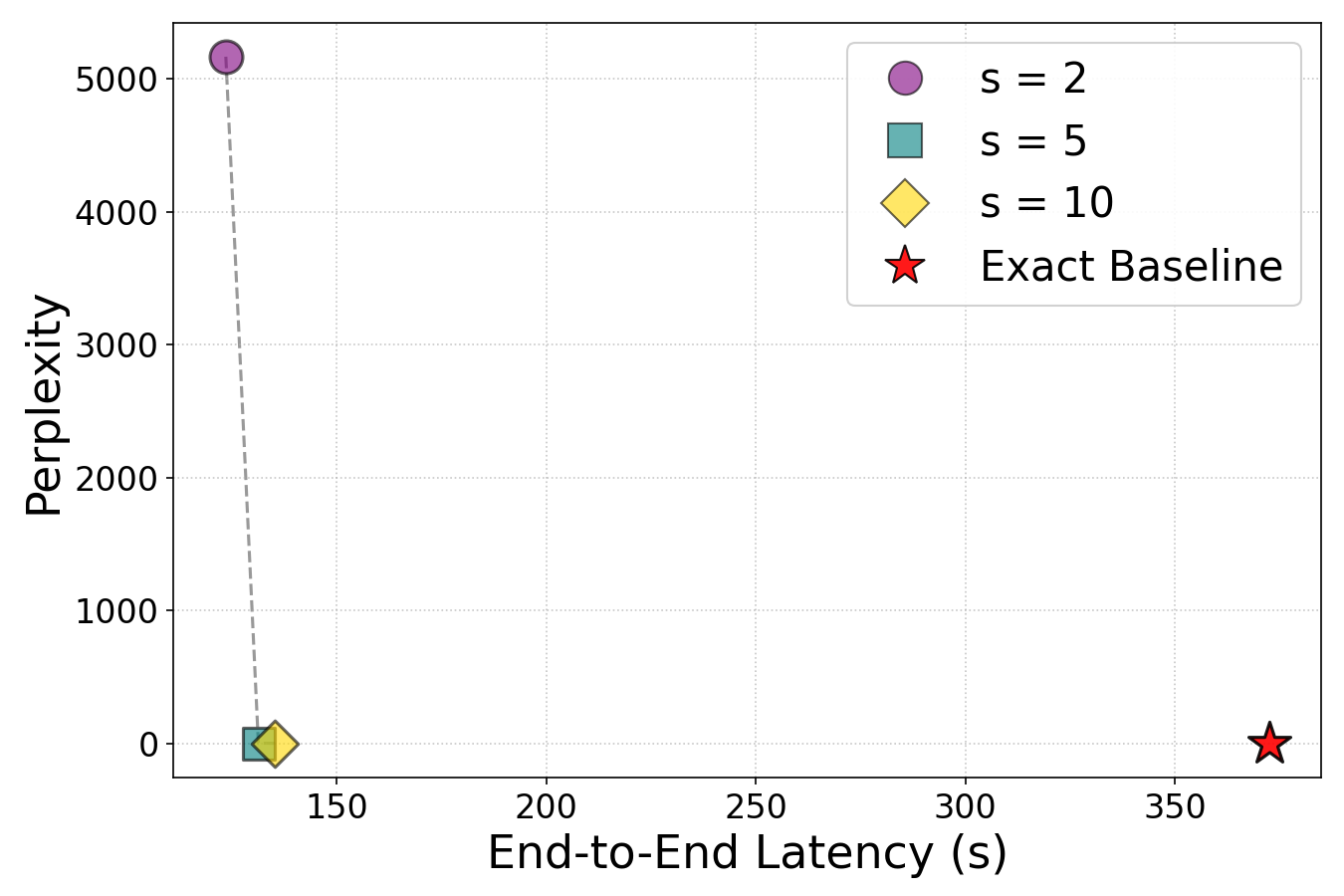}
         \caption{Results for OPT-1.3B}
   \end{subfigure}
     
     \caption{Comparison of the first-order approximations and the baseline across different model scales with multithreading using 14 CPU cores.}
     \label{fig:llm_test_multithread}
\end{figure}

\begin{table}[ht]
    \centering
    \renewcommand{\arraystretch}{1.5} 
    \begin{tabular}{l c c c c c}
        \toprule
        \textbf{Model} & \textbf{Mat} & \boldmath{$(T_B, P_B)$} & \boldmath{$(T_{s=2}, P_{s=2})$} & \boldmath{$(T_{s=5}, P_{s=5})$} & \boldmath{$(T_{s=10}, P_{s=10})$} \\
        \midrule
        OPT-125M & \makecell{(44183,3072)\\(44183,768)\\(3072,768)\\(768,768)} & \makecell{266.73552 s\\1.0658} & \makecell{164.27854 s\\1132.8163} & \makecell{165.44328 s\\1.5877} & \makecell{167.12624 s\\1.0596} \\
        \midrule
        OPT-125M$^*$ & \makecell{(44183,3072)\\(44183,768)\\(3072,768)\\(768,768)} & \makecell{77.55779 s\\1.0658} & \makecell{48.86126 s\\3421.8369} & \makecell{50.35627 s\\2.0283} & \makecell{53.39471 s\\1.0617} \\
        \midrule
        OPT-1.3B & \makecell{(28815,8192)\\(28815,2048)\\(8192,2048)\\(2048,2048)} & \makecell{2419.79320 s \\1.0509 } & \makecell{772.90297 s \\ 3260.6182} & \makecell{779.30732 s \\ 1.4873 } & \makecell{786.97144 s \\1.0419 } \\
        \midrule
        OPT-1.3B$^*$ & \makecell{(28815,8192)\\(28815,2048)\\(8192,2048)\\(2048,2048)} & \makecell{372.55008 s \\1.0509 } & \makecell{123.47397 s \\ 5164.7749} & \makecell{131.24341 s \\ 1.4869 } & \makecell{135.16379 s \\1.0423 } \\
        \bottomrule
    \end{tabular}
    \caption{Results for lower perplexity baselines in the end-to-end LLM application. $Mat$: Dimensions of matrices for input activation and static weights. $T_B$: Latency in baseline (in seconds). $P_B$: Perplexity in baseline. $T_{s=r}$: Latency (in seconds) with the SVD-based approximation algorithm when number of utilized components $k=r \log_2 n$, and $P_{s=r}$ denotes the corresponding perplexity. ${}^*$Multithreading with 14 CPU cores was allowed.}
    \label{tab:llm_test}
\end{table}

A fraction of the overall latency may represent other background processes that is agnostic to the matrix multiplication effort, but a equally significant fraction of the overall latency is indeed based on the multiplication, as indicated by the gains over the baseline model with a naive exact multiplication. As detailed in \cref{tab:llm_test} and shown in \cref{fig:llm_test} and \cref{fig:llm_test_multithread}, the first-order approximation establishes a clear Pareto front between computational speed and model accuracy:
\begin{itemize}
    \item \textit{Extreme Compression ($s=2$):} The method yields the fastest execution time, aggressively outperforming the exact baseline. However, capturing too few components from the LLM's singular value spectrum results in a measurable degradation in perplexity.
    \item \textit{The Optimal Sweet Spot ($s=5$):} At this operating point, the first-order approximation captures the necessary dominant components while recovering the discarded subspace via the residue matrices ($\DA$ and $\DB$) used in the first-order approach. Here, the model recovers near-lossless accuracy by converging to the baseline perplexity while still maintaining a large wall-clock speed-up.
    \item \textit{Diminishing Returns ($s=10$):} Increasing the components further maintains the baseline perplexity but increases the computational overhead, marginally eroding the latency gains.
    \item Occasionally, the approximated model may achieve a slightly lower (and better) perplexity than the exact baseline. This phenomenon occurs as truncation of the smallest singular values inherently filters out high-frequency noise within the matrices, acting as a form of low-rank regularization that can marginally improve the model's generalization on specific inputs.
\end{itemize}

Notably, the relative speed-up achieved by the proposed approximation increases significantly in the OPT-1.3B model compared to the OPT-125M model. With activation matrices of sizes $\{(44183,3072),(44183,768),(3072,768),(768,768)\}$ and 125 million parameters as in a small LLM model we see end-to-end gains in efficiency by a factor of $\approx 1.59X$. This end-to-end gain increases to a factor of $\approx 3.07X$ for 1.3 billion parameters in the OPT-1.3B LLM as shown. This empirical result aligns with the theoretical arithmetic complexity: as the inner matrix dimensions scale up, the $\mathcal{O}(n^2 \log n)$ approximation diverges further from the $\mathcal{O}(n^3)$ exact baseline as expected. As the number of parameters increases to say 1 trillion, the end-to-end gains in the LLM can be more than 6x depending on the tasks and systems used. We also present similar results and gains with text of higher complexity drawn from this manuscript in \cref{tab:llm_test_varied_text}, showing that the gains are relatively agnostic to the nature of text and corresponding baseline perplexity values. These results confirm that the proposed first-order approximation can reliably accelerate massive linear algebraic workloads in such learning architectures without compromising the integrity of the model. The corresponding reduction in power used due to these large reductions in computational effort and latency, are thus very significant as well.

\begin{table}[ht]
    \centering
    \renewcommand{\arraystretch}{1.5} 
\begin{tabular}{|c|c|c|cccccc|}
\hline
\textbf{Model} & \textbf{Mat} &  \boldmath{$(T_B,P_B)$ }& \multicolumn{6}{c|}{\boldmath{$(T_s,P_s)$} }\\
\hline
 &  &   & s=5 & s=10 & s=20 & s=30 & s=40 & s=50 \\
 \hline
OPT-125M & \makecell{53250,3072)\\(53250,768)\\(3072,768)\\(768,768)} & \makecell{125.17 s\\36.475} &  \makecell{62.13 s\\9866.6680} & \makecell{65.91 s\\1938.7532} & \makecell{69.59 s\\206.0167} & \makecell{73.10 s\\75.9855} & \makecell{82.34 s\\37.117} &\makecell{90.37 s\\36.2323}\\
\hline
OPT-1.3B & \makecell{(44375,8192)\\(44375,2048)\\(8192,2048)\\(2048,2048)} &\makecell{603.68 s\\23.3731}&\makecell{224.53 s\\10375.4541}&\makecell{229.05 s\\8423.0947}&\makecell{241.09 s\\233.1174}&\makecell{256.49 s\\77.0377}&\makecell{274.58 s\\33.098}&\makecell{292.99 s\\24.3073}\\
\hline
\end{tabular}
\caption{Results for higher perplexity baselines using text from a part of this manuscript in the end-to-end LLM application under a multithreaded (14-core) execution setting. $Mat$: Dimensions of matrices for input activation and static weights. $T_B$: Latency in baseline (in seconds). $P_B$: Perplexity in baseline. $T_{s=r}$: Latency (in seconds) with the SVD-based approximation algorithm when number of utilized components $k=r \log_2 n$, and $P_{s=r}$ denotes the corresponding perplexity.}
\label{tab:llm_test_varied_text}
\end{table}

\section{Conclusion}
A general approach to approximate multiplication that is suited for large dense matrices in practice, was presented using truncated decompositions and a first-order multiplication of the components. A few relevant decompositions of matrices suited for this objective were introduced. Efficient a priori evaluations for determining the suitability of a decomposition for the given matrices, as well as its relative errors in the multiplication, using the truncation errors in the decompositions was suggested. Theoretical results for estimation of relative error in the multiplication were presented in the analysis section. The presented numerical results demonstrate the utility of this approach even for reasonably low tolerances in relative error $\sim$ 1\%. We began by showing the gains in arithmetic efficiency of the proposed method over the naive exact multiplication for a given tolerance in relative error. Then we demonstrated gains in the run-times compared to a naive exact multiplication, and also the increase in speed and efficiency of end-to-end Large-Language-Model (LLM) operations, thus highlighting the large gains in practice.

\vspace{3mm}

\appendix

\section{Additional methods}

\subsection{Randomized low-rank multiplication}\label{sec:low_rank_algorithm}
The low-rank approximation applied to matrix multiplication $C=AB$, used to generate results presented in \cref{sec:results_truncated_SVD} is described here. The method considers the required product as a sum of outer products of columns from matrix $A$ and corresponding rows from matrix $B$ i.e. $C = \sum_{k = 1}^{n} A^{(k)} B_{(k)}$. Columns of $A$ numbered by positive integers $k$, and the corresponding rows of $B$ are randomly selected, totalling $c$ columns and rows in number \cite{matrixmultiply2006Kannan}. The $k^{th}$ column of $A$ indicated by a superscript (or a row of $B$ indicated by its subscript) is selected using rejection sampling with a probability $p_k$, based on its 2-norm.

\begin{equation*}
p_k=\frac{\norm{A^{(k)}}_2\norm{B_{(k)}}_2}{\sum_n \norm{A^{(k)}}_2\norm{B_{(k)}}_2}
\end{equation*}

Thus sampled outer-products are then summed as correspondingly up-weighted matrices as described in step 12 of $\cref{algo:matrix_mult}$, where the smaller outer-products tend to be ignored:

\begin{algorithm}
\caption{Randomized Low Rank Approximation of Matrix Multiplication}\label{algo:matrix_mult}
\begin{algorithmic}[1]
    \State \textbf{Input:} Columns and rows $A^{(k)}$, $B_{(k)}$ for each $k$, probability mass function $p_k$, number of samples $c$
    \State \textbf{Output:} Approximation matrix $M$ and relative error $E$
    \vspace{0.5em}
    
    \State \textbf{Initialization:}
    \State Initialize $M \gets 0$ (Zero matrix)
    \State Initialize $t \gets 0$ (Counter for accepted samples)
    
    \While{$t < c$}
        \State Sample index $k$ uniformly from $\{1, 2, \hdots, n\}$
        \State Generate uniform random number $U \in [0, 1]$
        \If{$U \cdot \max(p_k) < p_k$}
            \State Accept $k$
            \State Increment $t \gets t + 1$
            \State Update $M \gets M + \frac{A^{(k)} B_{(k)}}{c p_k}$
        \EndIf
    \EndWhile

    \State \textbf{Error Evaluation:}
    \State Compute relative error $E = \frac{\|M - AB\|_F}{\|AB\|_F}$
    
    \State \Return $M$, $E$
\end{algorithmic}
\end{algorithm}

\subsection{Front constants of additional distributions for estimating norm of the product of residue matrices}
Numerical trials were conducted for different distributions that may be useful in practical settings to estimate $\|\Delta A.\Delta B\|_F = c \|\Delta A\|_F\|\Delta B\|_F$. 25 trials each were preformed across 10 varying values of $n$ from 50 to 5000. Random matrices with entries drawn from the specified distributions (Uniform, Rademacher, Gaussian, Log-Normal, Student's $t_\nu$) were generated using rng(42) and MATLAB R2025b. Nearly identical values were confirmed for all $n$, for every distribution. The values of $c$ reported below in \cref{tab:front_constants} are for n=5000. Note that the largest possible norm of the product is bound by the Cauchy-Schwarz inequality $\norm{\Delta A.\Delta B}_F \leq \norm{\Delta A}_F \norm{\Delta B}_F$ and thus $c$ can be as large as 1. Hence it is advisable to use values of $c$ not much less than 1 even when the distribution of entries is nominally known, for a conservative estimate in practical algorithms.
        \begin{table}
        \centering
        \begin{tabular}{|c|c|c|}
        \hline
        \textbf{Distribution} & value of c & $\sigma$ at n=5000\\ \hline
        Uniform U(0,1)    &  0.750067 &  0.000050  \\ \hline
        Rademacher (±1)    & $ \frac{0.999997}{\sqrt{n}}$  & 0.000002  \\ \hline
        Normal (0,1)    & $ \frac{1.000019}{\sqrt{n}}$ & 0.000001     \\ \hline
        Log-Normal[exp(N(0,1))]    & $ 0.368169 \approx \frac{1}{e}$ & 0.000289     \\ \hline
        $t_v$(v=3)    & $ \frac{1.000222}{\sqrt{n}}$ & 0.000025    \\ \hline
        \end{tabular}
        \caption{Expected values of $\frac{\|\Delta A.\Delta B\|_F}{\|\Delta A\|_F\|\Delta B\|_F}$ estimated numerically. For $X \sim$Log-Normal distribution, using $\mathbb{E}[X]=e^{\frac{1}{2}}$, and $\mathbb{E}[X^2]=e^2$, the exact theoretical value for c is $\frac{1}{e}$.} \label{tab:front_constants}      
        \end{table}

\subsection{Singular values of example matrices} \label{sec:plots_SVD}

Distributions of the singular values for a few example matrices, including ones in \cref{tab:svd1}, are presented in \cref{fig:example_SVD_components}.
 \begin{figure}[ht]
    \centering
    \begin{minipage}[b]{0.3\textwidth}
        \centering
        \includegraphics[width=\textwidth]{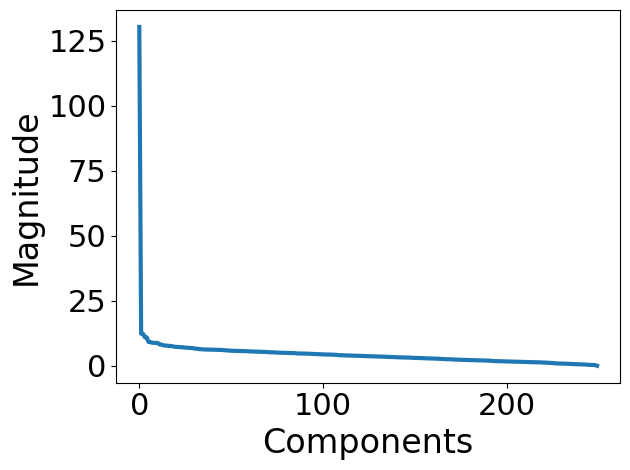}
        \caption*{Toeplitz}
    \end{minipage}%
    \begin{minipage}[b]{0.3\textwidth}
        \centering
        \includegraphics[width=\textwidth]{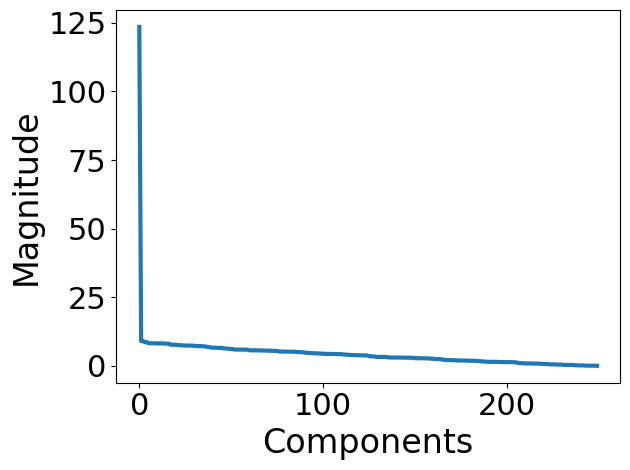}
        \caption*{Block Toeplitz}
    \end{minipage}%
    \begin{minipage}[b]{0.3\textwidth}
        \centering
        \includegraphics[width=\textwidth]{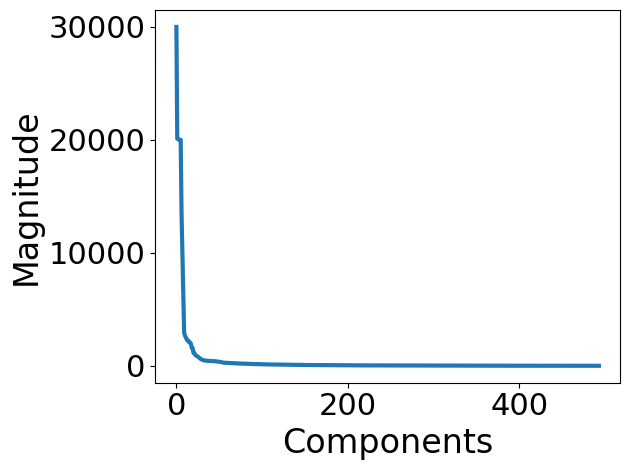}
        \caption*{Bus494}
    \end{minipage}
    
    \vskip 0.005cm 
    
    \begin{minipage}[b]{0.3\textwidth}
        \centering
        \includegraphics[width=\textwidth]{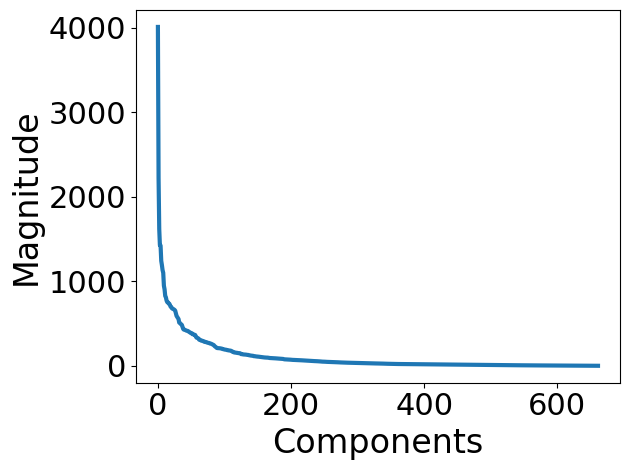}
        \caption*{Bus662}
    \end{minipage}%
    \begin{minipage}[b]{0.3\textwidth}
        \centering
        \includegraphics[width=\textwidth]{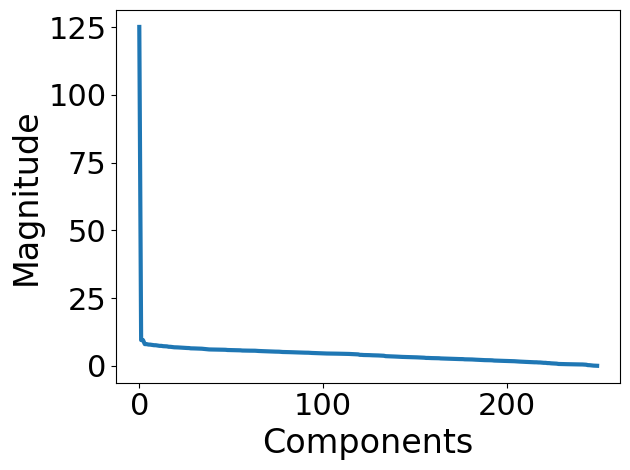}
        \caption*{Hankel}
    \end{minipage}%
    \begin{minipage}[b]{0.3\textwidth}
        \centering
        \includegraphics[width=\textwidth]{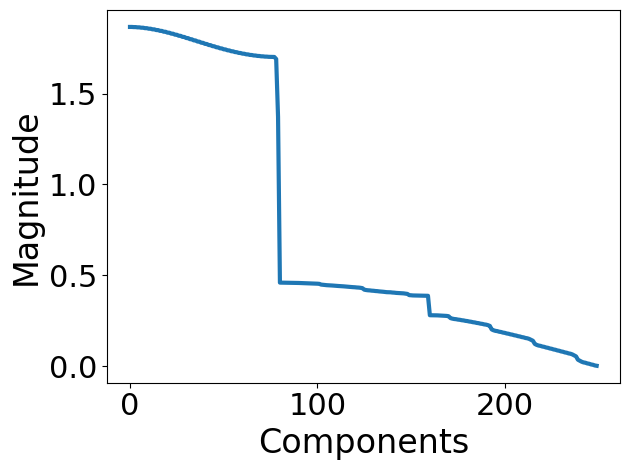}
        \caption*{Kappa matrix}
    \end{minipage}

    \vskip 0.005cm 
    
    \begin{minipage}[b]{0.3\textwidth}
        \centering
        \includegraphics[width=\textwidth]{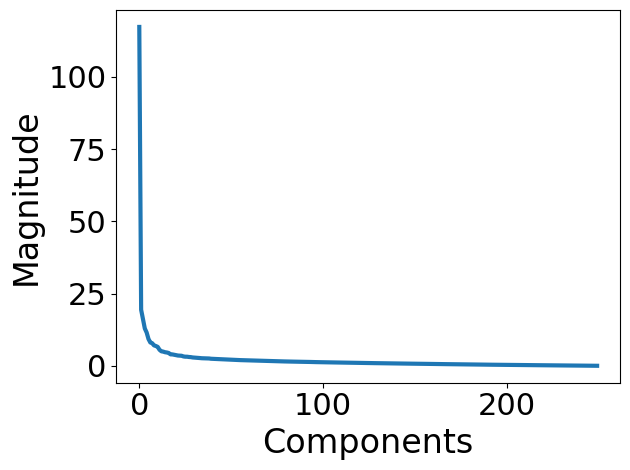}
        \caption*{Dog image}
    \end{minipage}%
    \begin{minipage}[b]{0.3\textwidth}
        \centering
        \includegraphics[width=\textwidth]{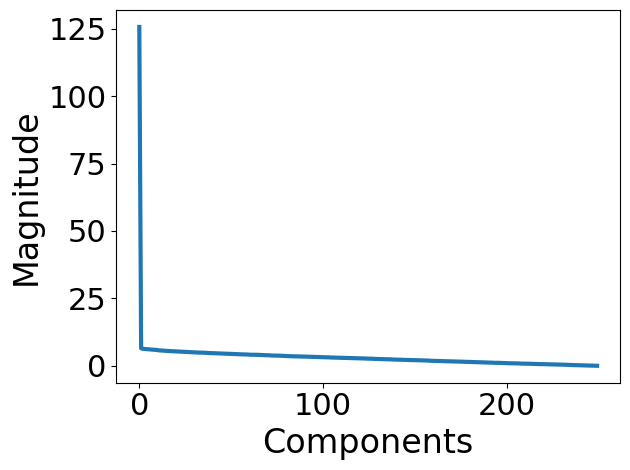}
        \caption*{Symmetric Random}
    \end{minipage}%
    \begin{minipage}[b]{0.3\textwidth}
        \centering
        \includegraphics[width=\textwidth]{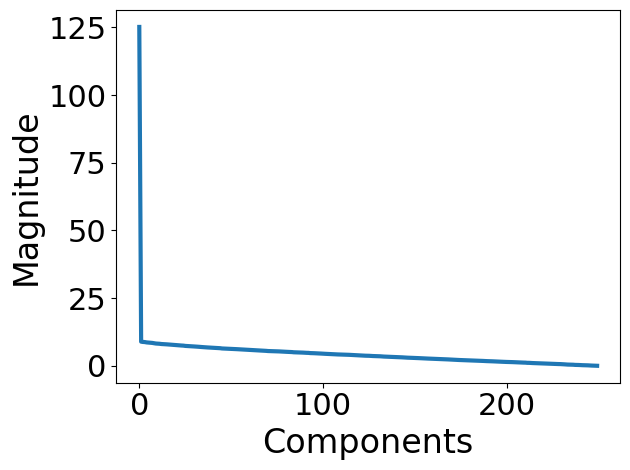}
        \caption*{General Random}
    \end{minipage}%

    \vskip 0.005cm 

    \begin{minipage}[b]{0.3\textwidth}
        \centering
        \includegraphics[width=\textwidth]{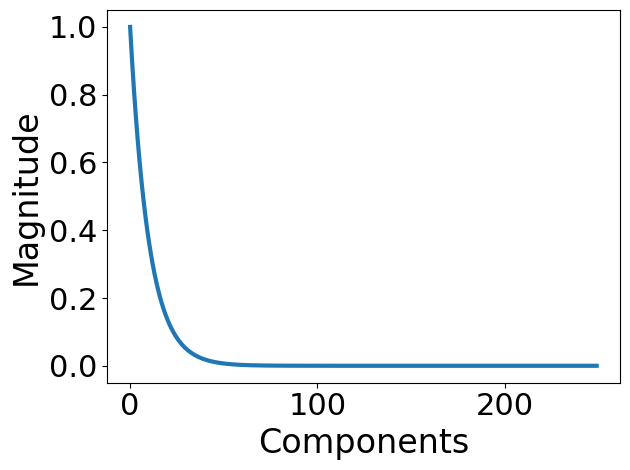}
        \caption*{Type-1}
    \end{minipage}%
    \begin{minipage}[b]{0.3\textwidth}
        \centering
        \includegraphics[width=\textwidth]{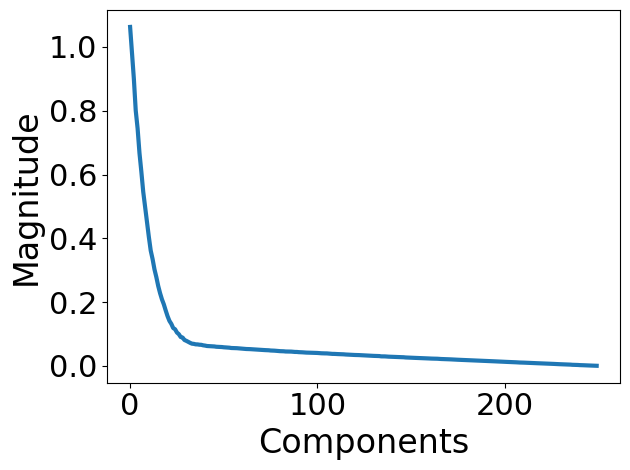}
        \caption*{Type-2}
    \end{minipage}%
    \begin{minipage}[b]{0.3\textwidth}
        \centering
        \includegraphics[width=\textwidth]{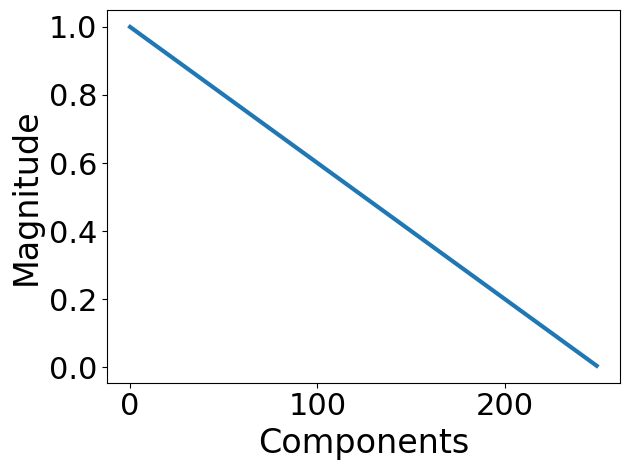}
        \caption*{Type-3}
    \end{minipage}

    \caption{Distribution of singular values for different types of matrices. The required set of entries for the Toeplitz, block Toeplitz, and Hankel matrices was generated randomly from a $\mathcal{U}(0,1)$ distribution. The Kappa matrix K was generated using the relation $K_{ij}=K_{ji}=e^{(-0.5|i-j|)}\times sin(i+1)$ for the entries.}\label{fig:example_SVD_components}
\end{figure}

\subsection{Circulant components of example matrices}\label{sec:plots_CD}
Distributions of the magnitudes of circulant components for a few example matrices, including ones in \cref{tab:cd1}, are presented in \cref{fig:example_CD_components}. The magnitude of a circulant component i.e. the matrix $R_k$, is given by the 2-norm of its first column $\|R_k^{(0)}\|_2$.

 \begin{figure}[ht]
    \centering
    \begin{minipage}[b]{0.3\textwidth}
        \centering
        \includegraphics[width=\textwidth]{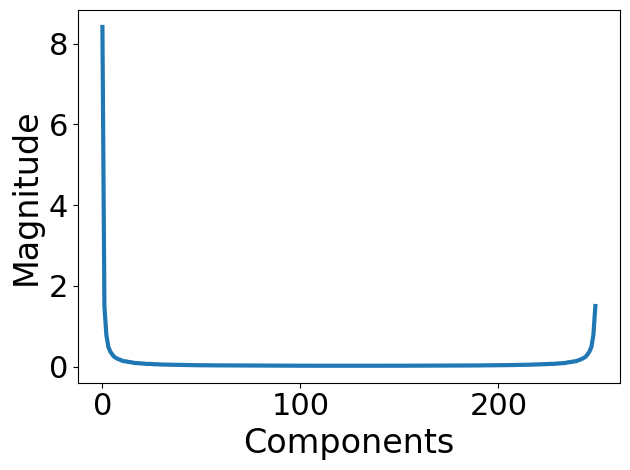}
        \caption*{Toeplitz}
    \end{minipage}%
    \begin{minipage}[b]{0.3\textwidth}
        \centering
        \includegraphics[width=\textwidth]{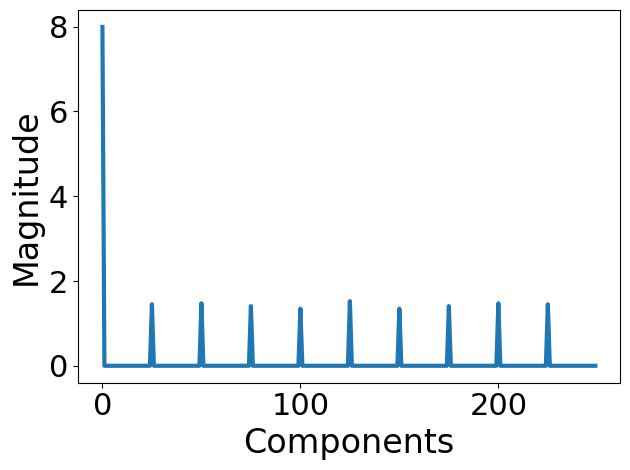}
        \caption*{Block Toeplitz}
    \end{minipage}%
    \begin{minipage}[b]{0.3\textwidth}
        \centering
        \includegraphics[width=\textwidth]{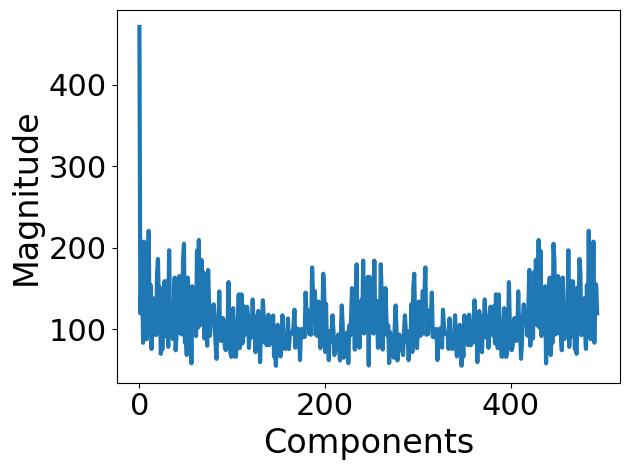}
        \caption*{Bus494}
    \end{minipage}
    
    \vskip 0.005cm 
    
    \begin{minipage}[b]{0.3\textwidth}
        \centering
        \includegraphics[width=\textwidth]{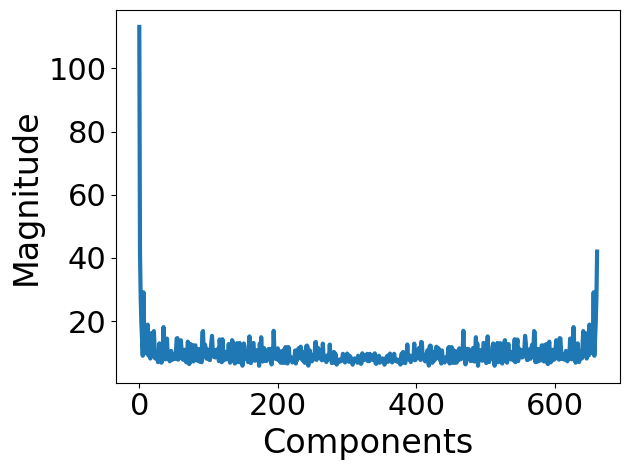}
        \caption*{Bus662}
    \end{minipage}%
    \begin{minipage}[b]{0.3\textwidth}
        \centering
        \includegraphics[width=\textwidth]{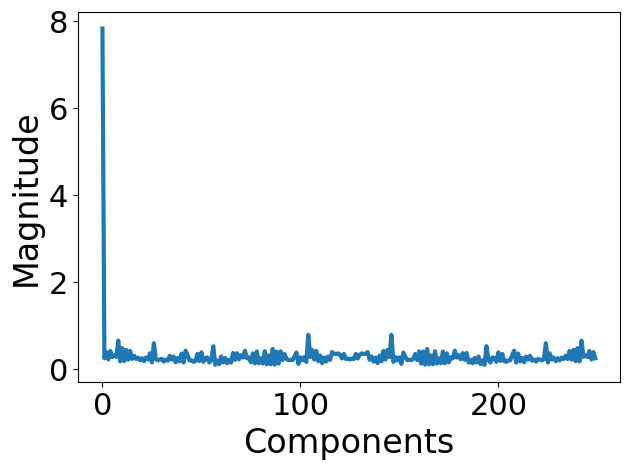}
        \caption*{Hankel}
    \end{minipage}%
    \begin{minipage}[b]{0.3\textwidth}
        \centering
        \includegraphics[width=\textwidth]{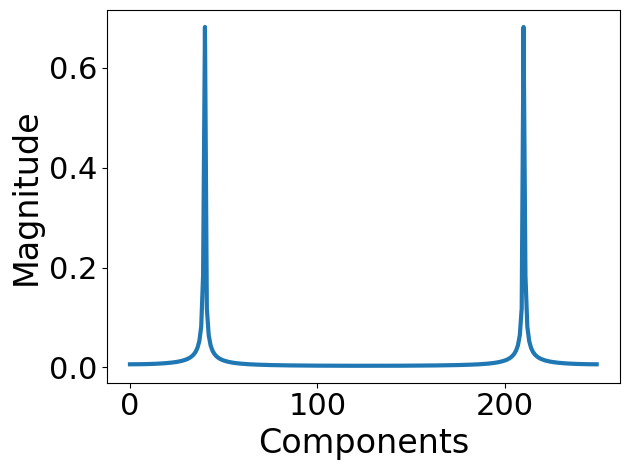}
        \caption*{Kappa matrix}
    \end{minipage}

    \vskip 0.005cm 
    
    \begin{minipage}[b]{0.3\textwidth}
        \centering
        \includegraphics[width=\textwidth]{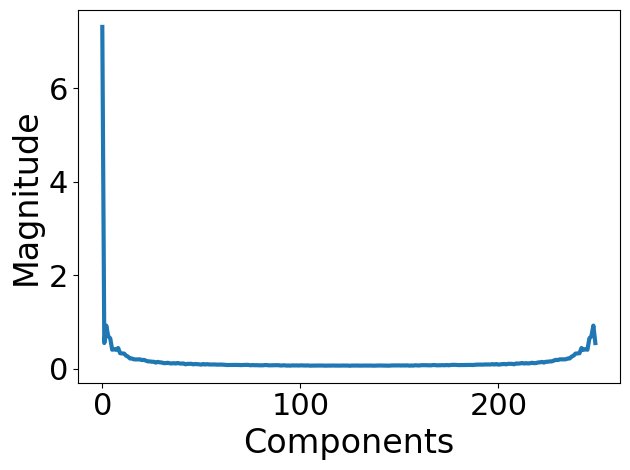}
        \caption*{Dog image}
    \end{minipage}%
    \begin{minipage}[b]{0.3\textwidth}
        \centering
        \includegraphics[width=\textwidth]{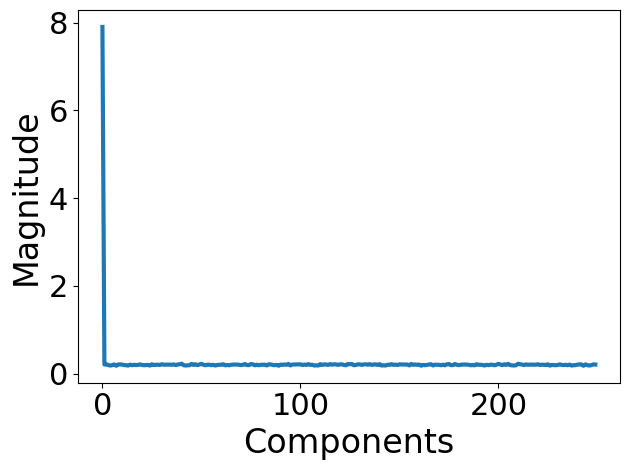}
        \caption*{Symmetric Random}
    \end{minipage}%
    \begin{minipage}[b]{0.3\textwidth}
        \centering
        \includegraphics[width=\textwidth]{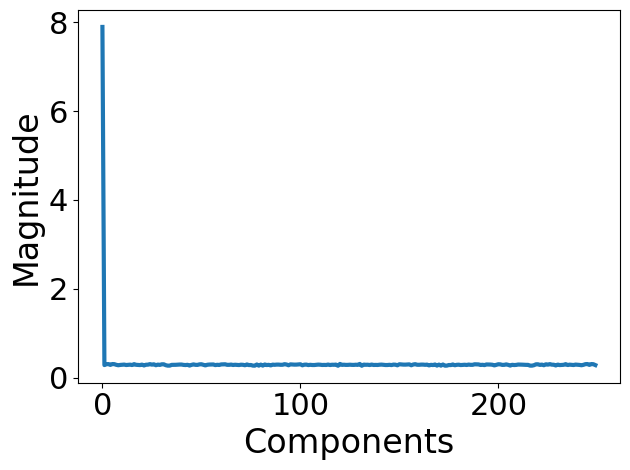}
        \caption*{General Random}
    \end{minipage}%
    
    \vskip 0.005cm 

    \begin{minipage}[b]{0.3\textwidth}
        \centering
        \includegraphics[width=\textwidth]{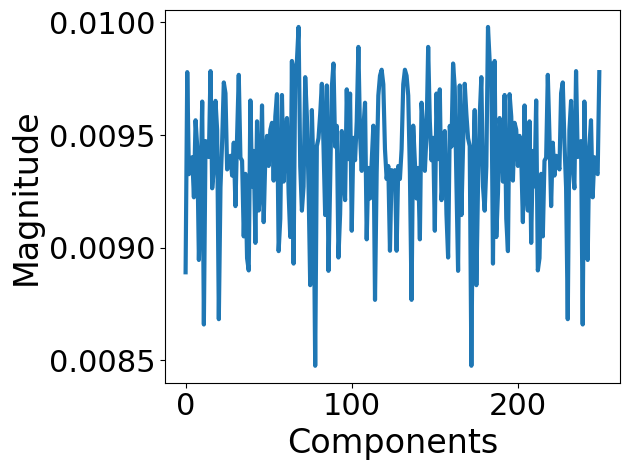}
        \caption*{Type-1}
    \end{minipage}%
    \begin{minipage}[b]{0.3\textwidth}
        \centering
        \includegraphics[width=\textwidth]{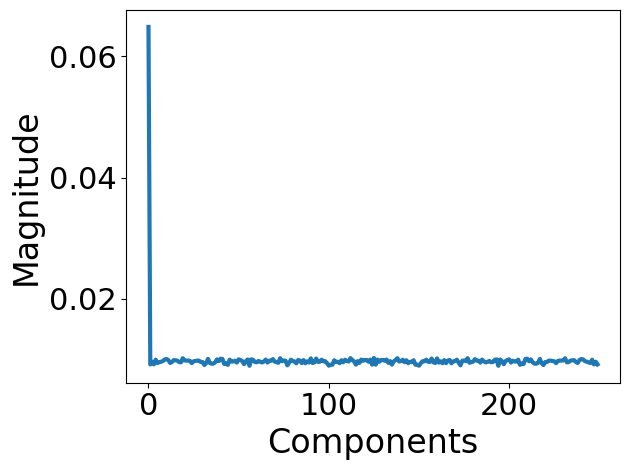}
        \caption*{Type-2}
    \end{minipage}
    \begin{minipage}[b]{0.3\textwidth}
        \centering
        \includegraphics[width=\textwidth]{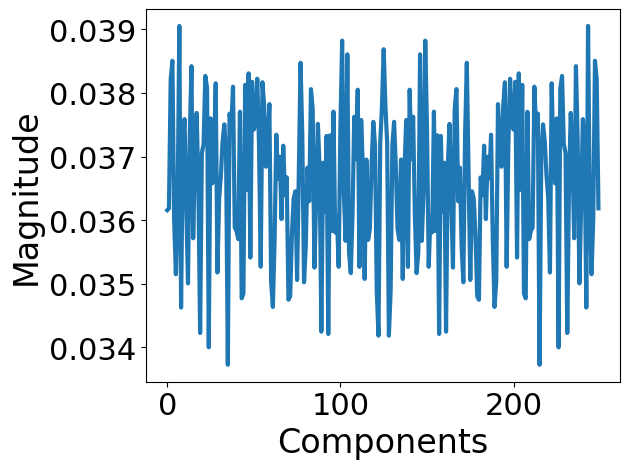}
        \caption*{Type-3}
    \end{minipage}

    \caption{Distribution of circulant components for different types of matrices. The required set of entries for the Toeplitz, block Toeplitz, and Hankel matrices was generated randomly from a $\mathcal{U}(0,1)$ distribution. The Kappa matrix K was generated using the relation $K_{ij}=K_{ji}=e^{(-0.5|i-j|)}\times sin(i+1)$ for the entries.}\label{fig:example_CD_components}
\end{figure}
    
\subsection{Cycles of a square matrix and its decomposition}\label{appendix:example_finding_cycles}
As an example,
\begin{align*}
A &=
\begin{bmatrix}
\color{green}{a_{00}} & a_{01} & \color{brown}{a_{02}} & \color{blue}{a_{03}} \\
\color{blue}{a_{10}} & \color{green}{a_{11}} & a_{12} & \color{brown}{a_{13}} \\
\color{brown}{a_{20}} & \color{blue}{a_{21}} & \color{green}{a_{22}} & a_{23} \\
a_{30} & \color{brown}{a_{31}} & \color{blue}{a_{32}} & \color{green}{a_{33}}
\end{bmatrix}
\\
&=\begin{bmatrix}
\color{green}{a_{00}} & 0 & 0& 0\\
0 & \color{green}{a_{11}} & 0 & 0 \\
0& 0 & \color{green}{a_{22}} & 0 \\
0& 0 & 0& \color{green}{a_{33}}
\end{bmatrix} + \begin{bmatrix}
0 & 0 & 0 & \color{blue}{a_{03}} \\
\color{blue}{a_{10}} & 0 & 0 & 0 \\
0 & \color{blue}{a_{21}} & 0& 0 \\
0 & 0 & \color{blue}{a_{32}} & 0
\end{bmatrix}+\begin{bmatrix}
0 & 0 & \color{brown}{a_{02}} & 0 \\
0 & 0 & 0 & \color{brown}{a_{13}} \\
\color{brown}{a_{20}} & 0 & 0& 0 \\
0& \color{brown}{a_{31}} & 0 & 0
\end{bmatrix}+\begin{bmatrix}
0 & a_{01} & 0& 0\\
0& 0 & a_{12} & 0 \\
0 & 0 & 0& a_{23} \\
a_{30} & 0 & 0& 0
\end{bmatrix}\\
&=\begin{bmatrix}
\color{green}{a_{00}} & 0 & 0& 0\\
0 & \color{green}{a_{11}} & 0 & 0 \\
0& 0 & \color{green}{a_{22}} & 0 \\
0& 0 & 0& \color{green}{a_{33}}
\end{bmatrix} C^{0}+\begin{bmatrix}
\color{blue}{a_{03}} & 0 & 0& 0\\
0 & \color{blue}{a_{10}} & 0 & 0 \\
0& 0 & \color{blue}{a_{21}} & 0 \\
0& 0 & 0& \color{blue}{a_{32}}
\end{bmatrix} C^{1}+\begin{bmatrix}
\color{brown}{a_{02}} & 0 & 0& 0\\
0 & \color{brown}{a_{13}} & 0 & 0 \\
0& 0 & \color{brown}{a_{20}} & 0 \\
0& 0 & 0& \color{brown}{a_{31}}
\end{bmatrix} C^{2}+\begin{bmatrix}
\color{black}{a_{01}} & 0 & 0& 0\\
0 & \color{black}{a_{12}} & 0 & 0 \\
0& 0 & \color{black}{a_{23}} & 0 \\
0& 0 & 0& \color{black}{a_{30}}
\end{bmatrix} C^{3}\\
&={\color{green}\Lambda_0}C^{0}+{\color{blue}\Lambda_1}C^{1}+{\color{brown}\Lambda_2}C^{2}+\Lambda_3C^{3}\\
\end{align*}
And hence \begin{align}
    \underline{A}&=\begin{bmatrix}
\color{green}{a_{00}} & \color{blue}{a_{03}} & \color{brown}{a_{02}}& a_{01}\\
\color{green}{a_{11}} & \color{blue}{a_{10}} & \color{brown}{a_{13}} & a_{12} \\
\color{green}{a_{22}}& \color{blue}{a_{21}} & \color{brown}{a_{20}} & a_{23} \\
\color{green}{a_{33}}& \color{blue}{a_{32}} & \color{brown}{a_{31}}& a_{30}
\end{bmatrix}\label{get_At1}
\end{align}

For $n \in \mathbb{N}$, $C^{0}=\mathbb{I}_{n}$, an $n\times n $ identity matrix has ones (non-zero entries) at the indices \{(0,0),(1,1),\dots,(n-1,n-1)\}. To obtain $C^{k}$, we need only the indices where the entries are non-zero, and the position of the non-zero entry in the $r^{th}$ row of $C^{k}$ is given by $(r-k) \mod n$. Similarly, we can obtain the diagonal matrix $\Lambda_{k}$, having its $r^{th}$ diagonal entry as the entry of $A$ indexed by [$r$, $(r-k) \mod n$], in $\sim n$ operations. Thus we construct $\underline{A}$ in $\sim n^2$ operations. By replacing steps 2 and 5 of \cref{direct_At_from_A} with $A_{t}=A$ and $col=(i-j)$ mod $n$ respectively, we can re-order $A$ to get $\underline{A}$ as in \cref{get_At1}.

As suggested in \cref{rightdecomp}, we may also need the decomposition $A=\sum_{i=0}^{n-1}C^{i}\Lambda_i$. In this case, \cref{direct_At_from_A} can be used as given to reorder $A$ as in \cref{get_At}. This following example can be referred for such a decomposition.

\begin{align*}
A &=
\begin{bmatrix}
\color{green}{a_{00}} & a_{01} & \color{brown}{a_{02}} & \color{blue}{a_{03}} \\
\color{blue}{a_{10}} & \color{green}{a_{11}} & a_{12} & \color{brown}{a_{13}} \\
\color{brown}{a_{20}} & \color{blue}{a_{21}} & \color{green}{a_{22}} & a_{23} \\
a_{30} & \color{brown}{a_{31}} & \color{blue}{a_{32}} & \color{green}{a_{33}}
\end{bmatrix}
\\
&=\begin{bmatrix}
\color{green}{a_{00}} & 0 & 0& 0\\
0 & \color{green}{a_{11}} & 0 & 0 \\
0& 0 & \color{green}{a_{22}} & 0 \\
0& 0 & 0& \color{green}{a_{33}}
\end{bmatrix} + \begin{bmatrix}
0 & 0 & 0 & \color{blue}{a_{03}} \\
\color{blue}{a_{10}} & 0 & 0 & 0 \\
0 & \color{blue}{a_{21}} & 0& 0 \\
0 & 0 & \color{blue}{a_{32}} & 0
\end{bmatrix}+\begin{bmatrix}
0 & 0 & \color{brown}{a_{02}} & 0 \\
0 & 0 & 0 & \color{brown}{a_{13}} \\
\color{brown}{a_{20}} & 0 & 0& 0 \\
0& \color{brown}{a_{31}} & 0 & 0
\end{bmatrix}+\begin{bmatrix}
0 & a_{01} & 0& 0\\
0& 0 & a_{12} & 0 \\
0 & 0 & 0& a_{23} \\
a_{30} & 0 & 0& 0
\end{bmatrix}\\
&=C^{0}\begin{bmatrix}
\color{green}{a_{00}} & 0 & 0& 0\\
0 & \color{green}{a_{11}} & 0 & 0 \\
0& 0 & \color{green}{a_{22}} & 0 \\
0& 0 & 0& \color{green}{a_{33}}
\end{bmatrix}+C^{1}\begin{bmatrix}
\color{blue}{a_{10}} & 0 & 0& 0\\
0 & \color{blue}{a_{21}} & 0 & 0 \\
0& 0 & \color{blue}{a_{32}} & 0 \\
0& 0 & 0& \color{blue}{a_{03}}
\end{bmatrix}+C^{2}\begin{bmatrix}
\color{brown}{a_{20}} & 0 & 0& 0\\
0 & \color{brown}{a_{31}} & 0 & 0 \\
0& 0 & \color{brown}{a_{02}} & 0 \\
0& 0 & 0& \color{brown}{a_{13}}
\end{bmatrix}+C^{3}\begin{bmatrix}
\color{black}{a_{30}} & 0 & 0& 0\\
0 & \color{black}{a_{01}} & 0 & 0 \\
0& 0 & \color{black}{a_{12}} & 0 \\
0& 0 & 0& \color{black}{a_{23}}
\end{bmatrix} \\
&=C^{0}{\color{green}\Lambda_0}+C^{1}{\color{blue}\Lambda_1}+C^{2}{\color{brown}\Lambda_2}+C^{3}\Lambda_3\\
\end{align*}

And hence \begin{align}
    \underline{A}&=\begin{bmatrix}
\color{green}{a_{00}} & \color{blue}{a_{10}} & \color{brown}{a_{20}}& a_{30}\\
\color{green}{a_{11}} & \color{blue}{a_{21}} & \color{brown}{a_{31}} & a_{01} \\
\color{green}{a_{22}}& \color{blue}{a_{32}} & \color{brown}{a_{02}} & a_{12} \\
\color{green}{a_{33}}& \color{blue}{a_{03}} & \color{brown}{a_{13}}& a_{23}
\end{bmatrix}\label{get_At}
\end{align}

\begin{algorithm}
\caption{Construct $\underline{A}$ from $A$ as in \cref{get_At}}\label{direct_At_from_A}
\begin{algorithmic}[1]
\State Initialize $\underline{A} \gets \text{zero matrix of size } n \times n$
\State $A_t \gets A^T$
\For{$j = 0$ to $n - 1$}
    \For{$i = 0$ to $n - 1$}
        \State $col \gets (i + j) \bmod n $
        \State $\underline{A}[i , j ] \gets A_t[i , col]$
    \EndFor
\EndFor
\end{algorithmic}
\end{algorithm}

\section{Evaluation of the variance of the norms of matrices with Haar singular vectors}
Let $S = \norm{AB}_F^2$.  Let $\alpha_1 = \sum_{i} D_1(i)^2$ and $\alpha_2 = \sum_{i} D_2(i)^2$ and $\beta_1 = \sum_{i} D_1(i)^4$ and $\beta_2 = \sum_{i} D_2(i)^4$.

\begin{lemma}
$\mathbb{E}[S^2] = \frac{1}{n(n+1)} (\beta_1 \beta_2 + \alpha_1^2 \alpha_2^2).$
\end{lemma}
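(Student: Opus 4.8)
The plan is to reduce $\mathbb{E}[S^2]$ to a weighted count of fourth-order Haar moments and then to exploit a cancellation among the mixed terms. From \cref{th:expected_norm_square} we already have the entrywise expansion $S=\|D_1 Q D_2\|_F^2=\sum_{i,j} D_1(i)^2\,Q_{ij}^2\,D_2(j)^2$, where $Q=Q_2Q_3$ is Haar-distributed. Squaring this gives
\begin{align*}
S^2=\sum_{i,j,k,l} D_1(i)^2 D_1(k)^2\, D_2(j)^2 D_2(l)^2\; Q_{ij}^2\,Q_{kl}^2,
\end{align*}
so that, by linearity, $\mathbb{E}[S^2]$ is determined entirely by the joint second-order-squared moments $\mathbb{E}[Q_{ij}^2 Q_{kl}^2]$ of a single Haar matrix. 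The whole proof therefore hinges on supplying these moments and organizing the resulting quadruple sum.

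Next I would record the standard Haar fourth-moment identities, which take four values according to how the index pairs coincide. Writing them in the form needed here,
\begin{align*}
\mathbb{E}[Q_{ij}^4] &= \frac{2}{n(n+1)},\\
\mathbb{E}[Q_{ij}^2 Q_{il}^2] = \mathbb{E}[Q_{ij}^2 Q_{kj}^2] &= \frac{1}{n(n+1)} \qquad (j\neq l,\; i\neq k),\\
\mathbb{E}[Q_{ij}^2 Q_{kl}^2] &= \frac{1}{n(n+1)} \qquad (i\neq k,\; j\neq l).
\end{align*}
These are the values one obtains from Weingarten calculus for the Haar ensemble; an equivalent and cleaner route is to evaluate the single unitary integral $\int \operatorname{Tr}(D_1^2 Q D_2^2 Q^*)^2\,dQ$ directly through the second-order Weingarten formula, which produces exactly the $\beta_1\beta_2$ and $\alpha_1^2\alpha_2^2$ invariants together with mixed invariants.

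Then I would assemble the sum by partitioning the indices into the four coincidence classes. The diagonal class $(i=k,\,j=l)$ contributes $\tfrac{2}{n(n+1)}\beta_1\beta_2$; the row-coincident class $(i=k,\,j\neq l)$ contributes $\tfrac{1}{n(n+1)}\beta_1(\alpha_2^2-\beta_2)$, using $\sum_{j\neq l}D_2(j)^2 D_2(l)^2=\alpha_2^2-\beta_2$; the column-coincident class contributes $\tfrac{1}{n(n+1)}(\alpha_1^2-\beta_1)\beta_2$; and the fully off-diagonal class contributes $\tfrac{1}{n(n+1)}(\alpha_1^2-\beta_1)(\alpha_2^2-\beta_2)$. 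Summing the four contributions, the coefficient of $\beta_1\beta_2$ collapses to $1$, the mixed terms $\beta_1\alpha_2^2$ and $\alpha_1^2\beta_2$ cancel exactly, and the coefficient of $\alpha_1^2\alpha_2^2$ is $1$, yielding
\begin{align*}
\mathbb{E}[S^2]=\frac{1}{n(n+1)}\bigl(\beta_1\beta_2+\alpha_1^2\alpha_2^2\bigr).
\end{align*}

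The main obstacle is the fully off-diagonal moment together with the exactness of the cancellation: the cross terms vanish only because $\mathbb{E}[Q_{ij}^4]=2\,\mathbb{E}[Q_{ij}^2 Q_{il}^2]$ and because the generic moment is put on the same footing $\tfrac{1}{n(n+1)}$ as the row/column-coincident moments. I would flag that the exact Weingarten value of the fully off-diagonal moment is $\tfrac{1}{n^2-1}$, which agrees with $\tfrac{1}{n(n+1)}$ only to leading order in $1/n$; keeping it exactly leaves residual cross-terms of order $n^{-3}$. Thus the delicate point is justifying (or approximating to leading order) this generic moment, after which the algebraic bookkeeping of the four index classes is routine.
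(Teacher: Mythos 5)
Your proposal follows essentially the same route as the paper: expand $S^2$ as a quadruple sum, partition by index coincidences, and insert the Haar fourth moments $\mathbb{E}[|Q_{ij}|^4]=\tfrac{2}{n(n+1)}$ and $\mathbb{E}[|Q_{ij}|^2|Q_{kl}|^2]=\tfrac{1}{n(n+1)}$; your finer four-class bookkeeping collapses to the paper's two-class split. The caveat you raise at the end is not merely a technicality but a genuine correction to the paper's own proof: by the row-normalization identity $\sum_{j,l}\mathbb{E}[|Q_{ij}|^2|Q_{kl}|^2]=1$ for $i\neq k$, the fully off-diagonal moment must equal $\tfrac{1}{n^2-1}$, not $\tfrac{1}{n(n+1)}$, so the exact value is
\begin{align*}
\mathbb{E}[S^2]=\frac{\beta_1\beta_2+\alpha_1^2\alpha_2^2}{n(n+1)}+\frac{(\alpha_1^2-\beta_1)(\alpha_2^2-\beta_2)}{n(n^2-1)},
\end{align*}
and the lemma's stated equality holds only up to this relative $\mathcal{O}(1/n)$ correction (which is harmless for the asymptotic variance and concentration claims downstream, but should be stated as an approximation rather than an identity).
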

\begin{proof}
    We have 
    \begin{align*}
        S^2 &= \sum_{i,j,k,l} D_1(i)^2 D_2(j)^2 D_1(k)^2 D_2(l)^2|Q_{i,j}|^2 |Q_{k,l}|^2 .
    \end{align*}
    The expectation is given by :
     \begin{align*}
        \mathbb{E}[S^2] &= \sum_{i,j,k,l}  D_1(i)^2 D_2(j)^2 D_1(k)^2 D_2(l)^2 \mathbb{E}[|Q_{i,j}|^2 |Q_{k,l}|^2], \\
        &= \sum_{(i,j) = (k,l)} D_1(i)^4 D_2(j)^4\mathbb{E}[|Q_{i,j}|^4] + \sum_{(i,j) \neq (k,l)}  D_1(i)^2 D_2(j)^2 D_1(k)^2 D_2(l)^2 \mathbb{E}[|Q_{i,j}|^2 |Q_{k,l}|^2] 
    \end{align*}

    Now using the fact that for Haar distributed matrices we have $\mathbb{E}[|Q_{i,j}|^4]  = \frac{2}{n(n+1)}$ and $\mathbb{E}[|Q_{i,j}|^2 |Q_{k,l}|^2]  = \frac{1}{n(n+1)}$. By substituting the above we get :

    \begin{align*}
         \mathbb{E}[S^2] &=  \sum_{(i,j) = (k,l)} D_1(i)^4 D_2(j)^4 \frac{2}{n (n+1)} + \sum_{(i,j) \neq (k,l)}  D_1(i)^2 D_2(j)^2 D_1(k)^2 D_2(l)^2 \frac{1}{n(n+1)}
    \end{align*}

    Let $\alpha_1 = \sum_{i} D_1(i)^2$ and $\alpha_2 = \sum_{i} D_2(i)^2$ and $\beta_1 = \sum_{i} D_1(i)^4$ and $\beta_2 = \sum_{i} D_2(i)^4$.

    Thus we have :
    \begin{align*}
         \mathbb{E}[S^2] &= \frac{1}{n(n+1)} (\beta_1 \beta_2 + \alpha_1^2 \alpha_2^2).
    \end{align*}
\end{proof}

Further, we can compute the variance of $S$ as :

\begin{lemma}
$\text{Var}(S) = \mathbb{E}[S^2] - \mathbb{E}[S]^2 = \frac{1}{n(n+1)} \left( \beta_1 \beta_2 - \frac{\alpha_1^2 \alpha_2^2}{n} \right)$.    
\end{lemma}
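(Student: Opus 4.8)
The plan is to assemble the variance directly from the two quantities already in hand, so no new probabilistic input is needed. The immediately preceding lemma supplies the second moment $\mathbb{E}[S^2] = \frac{1}{n(n+1)}(\beta_1\beta_2 + \alpha_1^2\alpha_2^2)$, and \cref{th:expected_norm_square} supplies the first moment. The only translation required at the outset is to note that in the notation $\alpha_1 = \sum_i D_1(i)^2 = \norm{D_1}_F^2$ and $\alpha_2 = \sum_i D_2(i)^2 = \norm{D_2}_F^2$, so that \cref{th:expected_norm_square} reads $\mathbb{E}[S] = \frac{1}{n}\norm{D_1}_F^2\norm{D_2}_F^2 = \frac{1}{n}\alpha_1\alpha_2$, and hence $\mathbb{E}[S]^2 = \frac{1}{n^2}\alpha_1^2\alpha_2^2$.

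Next I would substitute both expressions into the definition $\text{Var}(S) = \mathbb{E}[S^2] - \mathbb{E}[S]^2$, giving
\begin{align*}
\text{Var}(S) &= \frac{1}{n(n+1)}\left(\beta_1\beta_2 + \alpha_1^2\alpha_2^2\right) - \frac{1}{n^2}\alpha_1^2\alpha_2^2.
\end{align*}
The single nontrivial manipulation is to collect the two terms carrying the factor $\alpha_1^2\alpha_2^2$ over the common denominator $n^2(n+1)$, using
\begin{align*}
\frac{1}{n(n+1)} - \frac{1}{n^2} = \frac{n - (n+1)}{n^2(n+1)} = -\frac{1}{n^2(n+1)},
\end{align*}
which isolates the cancellation responsible for the stated form.

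Combining the pieces then yields
\begin{align*}
\text{Var}(S) &= \frac{\beta_1\beta_2}{n(n+1)} - \frac{\alpha_1^2\alpha_2^2}{n^2(n+1)} = \frac{1}{n(n+1)}\left(\beta_1\beta_2 - \frac{\alpha_1^2\alpha_2^2}{n}\right),
\end{align*}
as claimed. There is no genuine obstacle here: the entire argument is a routine algebraic reduction once the two moments are invoked, and the only place to exercise care is keeping the powers of $n$ straight in the subtraction $\frac{1}{n(n+1)}-\frac{1}{n^2}$, since a sign or an exponent slip there is the one way the final simplification could go wrong.
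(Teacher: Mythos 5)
Your proposal is correct and follows exactly the same route as the paper: substitute $\mathbb{E}[S^2]$ from the preceding lemma and $\mathbb{E}[S]=\frac{1}{n}\alpha_1\alpha_2$ from \cref{th:expected_norm_square} into $\mathrm{Var}(S)=\mathbb{E}[S^2]-\mathbb{E}[S]^2$ and simplify. The algebraic collection of the $\alpha_1^2\alpha_2^2$ terms over the common denominator $n^2(n+1)$ is carried out correctly and matches the paper's computation.
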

\begin{proof}
    By using the previous arguments we have :
    \begin{align*}
        \text{Var}(S) &= \mathbb{E}[S^2] - \mathbb{E}[S]^2 \\
        &= \frac{\beta_1\beta_2 + \alpha_1^2 \alpha_2^2}{n(n+1)} - \frac{\alpha_1^2 \alpha_2^2}{n^2} \\
        &= \frac{1}{n(n+1)} \left( \beta_1 \beta_2 - \frac{\alpha_1^2 \alpha_2^2}{n} \right)
    \end{align*}
\end{proof}

The expected Frobenius norm of the product matrix can be obtained using the Taylor series expansion. 

\begin{align*}
    f(S - \mathbb{E}[S] + \mathbb{E}[S]) &\approx f(\mathbb{E}[S]) + (S - \mathbb{E}[s])f'(\mathbb{E}[S]) +\frac{1}{2} (S - \mathbb{E}[S])^2 f''(\mathbb{E}[S])  
\end{align*}
Now using the expectation and applying the above to the function of a square-root, we get :

\begin{align*}
    \mathbb{E}[\sqrt{S}] \approx \sqrt{\mathbb{E}[S]} - \frac{1}{8 \mathbb{E}[S]^{3/2}}\text{Var}(S).
\end{align*}

Now using the variance of $S$,
\begin{align*}
    \mathbb{E}[\sqrt{S}] =  \sqrt{\mathbb{E}[S]} - \frac{n^{3/2}}{8 (\alpha_1 \alpha_2)^{3/2}}\left(  \frac{1}{n(n+1)} \left( \beta_1 \beta_2 - \frac{\alpha_1^2 \alpha_2^2}{n} \right) \right) + \epsilon.
\end{align*}

Further using the fact that $\mathbb{E}[S] = \sqrt{\frac{\alpha_1 \alpha_2}{n}}$, we have :
\begin{align*}
    \mathbb{E}[\sqrt{S}] &=  \sqrt{\mathbb{E}[S]}\left(1 - \frac{n^{2}}{8 (\alpha_1 \alpha_2)^{2}}\left(  \frac{1}{n(n+1)} \left( \beta_1 \beta_2 - \frac{\alpha_1^2 \alpha_2^2}{n} \right) \right) \right) + \epsilon. \\
    &= \sqrt{\mathbb{E}[S]}\left(1 - \frac{n}{8 (\alpha_1 \alpha_2)^{2}}\left(  \frac{1}{(n+1)} \left( \beta_1 \beta_2 - \frac{\alpha_1^2 \alpha_2^2}{n} \right) \right) \right) + \epsilon, \\
     &= \sqrt{\mathbb{E}[S]}\left(1 + \frac{1}{8(n+1)} - \frac{n\beta_1 \beta_2  }{8(n+1) (\alpha_1 \alpha_2)^{2}} \right) + \epsilon. 
\end{align*}

\section{Randomly generated matrices with a given distribution of singular values} \label{sec:matrix_generation}
The following algorithm was used to randomly generate matrices with a given distribution of singular values. The generated matrices have singular vectors that are  orthogonal matrices with a Haar distribution (uniform on $n$-sphere).

\begin{algorithm}[H]
\caption{Generate Matrices with Specific Singular Value Distributions} 
\begin{algorithmic}[1]
\Require List \texttt{S} with length \texttt{n}\Comment{List of singular values for a given distribution.}
\Ensure Matrix $Q_1 D Q_2^T$
\State $D \longleftarrow S$; \Comment{Create a diagonal matrix with $S$ as diagonal.}
\State Generate random matrices $M_1,M_2 \sim \mathcal{N}(0,1)^{\texttt{n} \times \texttt{n}}$
\State $Q_1 R_1 \longleftarrow M_1$, $Q_2R_2 \longleftarrow M_2$; \Comment{Compute QR decomposition.}
\State $Q_1 \longleftarrow Q_1.diag(sign(diag(R_1)))$; \Comment{Multiply a diagonal matrix with corresponding signs of $R$}
\State $Q_2 \longleftarrow Q_2.diag(sign(diag(R_2)))$ ; \Comment{Non-negative diagonal elements of $R_1$, $R_2$ for a Haar distribution of orthogonal matrices.}
\State Return: $Q_1 D Q_2^T$
\end{algorithmic}
\end{algorithm}

S[i]=$e^{\frac{-i}{10}}$, and S[i]=$\frac{n-i}{n}$ for $i=0,1,2,\dots,(n-1)$ were used for generating Type-1 and Type-3 matrices respectively. To generate Type-2 matrices, a Type-1 matrix $T$ is added with significant full-rank noise $\frac{0.5\|T\|_F}{\|U\|_F}U$, $U$ being a matrix with random entries from $\mathcal{U}$(0,1).

To create the query$(Q_i)$, key $(K_i)$, value $(V_i)$, and input matrices$(I_i)$, we used the GPT-Neo-1(3B) model with two input texts as follows :
\begin{itemize}
    \item ``In a world where distractions are endless and time seems to slip away faster than ever, choosing to live with intention, to act with kindness, and to remain grounded in gratitude becomes a radical act of self-awareness, a declaration that life is not merely a series of obligations or fleeting moments, but a profound journey filled with opportunities to grow, to connect deeply with others, and to contribute meaningfully to something greater than oneself, and though the path may be uncertain, filled with trials and unexpected detours, it is precisely in those uncertain spaces where courage is born, where resilience is forged, and where the quiet strength of the human spirit rises, reminding us that every small act of love, truth, and compassion truly matters."

    \item ``Success is not solely defined by wealth, recognition, or the accumulation of achievements, but rather by the quiet commitment to live with integrity, to treat others with respect, and to remain true to one’s values even when the world encourages compromise; it is found in the consistent effort to grow, to learn from mistakes, to listen deeply, and to act with empathy, even when it’s inconvenient or difficult, because real success is measured by the positive impact we have on others, the courage we show in the face of adversity, and the authenticity with which we move through life, knowing that our time is limited and our legacy is shaped not by what we gain, but by what we give, share, and stand for."
\end{itemize}
The programs used for generating the example matrices and their results determining the number of required components of a decomposition to achieve a given tolerance in relative error can be found in \cite{Codes}. This repository also contains the LLM matrices used. The Density-Functional-Theory matrices represent finite-element discretized DFT model of BCC Molybdenum supercell with a vacancy, after a projection into the desired eigenstates. These matrices are iterated in a self-consistent physics based approach to solve a non-linear eigenvalue problem representing energies of the material. A global DFT matrix, principal sub-matrices of which were used here in the experiments is also included in this repository.\\

\vspace{3mm}
\noindent \textbf{Author declarations}:

\vspace{3mm}
\noindent \textbf{Funding}: Murugesan Venkatapathi acknowledges the support of the Science and Engineering Research Board (SERB) grant CRG/2022/004178 in performing this research.

\vspace{3mm}
\noindent \textbf{Conflicts of interest}: The authors do not have any competing financial or non-financial interests to declare.

\bibliographystyle{siamplain}
\bibliography{reference.bib}
\end{document}